\documentclass[a4paper,10pt]{amsart}
\usepackage[utf8]{inputenc}
\usepackage{amsthm,amsmath,amssymb,mathtools,mathrsfs,setspace}
\usepackage{bm}
\usepackage{graphicx}
\usepackage{todonotes}
\usepackage{ esint }
\usepackage{thmtools} 
\allowdisplaybreaks
\usepackage[doi=false,isbn=false,url=false]{biblatex}

\usepackage[pdfdisplaydoctitle,colorlinks,breaklinks,urlcolor=blue,linkcolor=blue,citecolor=blue]{hyperref} 
\allowdisplaybreaks

\newcommand{\B}{\mathcal{B}}
\newcommand{\C}{\mathbb{C}}

\newcommand{\N}{\mathbb{N}}

\newcommand{\R}{\mathbb{R}}

\newcommand{\T}{\mathbb{T}}

\newcommand{\U}{\mathcal{U}}

\newcommand{\Z}{\mathbb{Z}}


	\renewcommand{\P}{\mathbb{P}}

\newcommand{\ovl}{\overline}
\newcommand{\ep}{\epsilon}

\newcommand{\cD}{\mathcal{D}}

\newcommand{\cL}{\mathcal{L}}
\newcommand{\cM}{\mathcal{M}}
\newcommand{\cN}{\mathcal{N}}

\newcommand{\cQ}{\mathcal{Q}}

\newcommand{\cW}{\mathcal{W}}


\DeclareMathOperator{\supp}{supp}

\let\div\relax
\DeclareMathOperator{\div}{div}

\renewcommand{\epsilon}{\varepsilon}


\newcommand{\brak}[1]{\left\langle#1\right\rangle}

\newtheorem{theorem}{Theorem}[section]
\newtheorem{definition}[theorem]{Definition}
\newtheorem{corollary}[theorem]{Corollary}
\newtheorem{lemma}[theorem]{Lemma}
\newtheorem{proposition}[theorem]{Proposition}

\theoremstyle{remark}
\newtheorem{rmk}[theorem]{Remark}
\newtheorem{Nota}[theorem]{Nota}

\numberwithin{equation}{section}

\usepackage{biblatex} 
\addbibresource{main.bib} 
\usepackage{caption}
\usepackage{subcaption}

\providecommand{\U}[1]{\protect\rule{.1in}{.1in}}
\setcounter{tocdepth}{1}

\newcommand{\abbr}[1]{{\tt{\uppercase{\scalebox{0.75}{#1}}}}}

\begin{document}

\title{Smoluchowski coagulation equation with velocity dependence}

\author[F. Flandoli]{Franco Flandoli}
\address{Scuola Normale Superiore, Piazza dei Cavalieri, 7, 56126 Pisa, Italia}
\email{\href{mailto:franco.flandoli@sns.it}{franco.flandoli@sns.it}}
\date\today

\author[R. Huang]{Ruojun Huang}
\address{Scuola Normale Superiore, Piazza dei Cavalieri, 7, 56126 Pisa, Italia}
\email{\href{mailto:ruojun.huang@sns.it}{ruojun.huang@sns.it}}

\author[A. Papini]{Andrea Papini}
\address{Scuola Normale Superiore, Piazza dei Cavalieri, 7, 56126 Pisa, Italia}
\email{\href{mailto:andrea.papini@sns.it}{andrea.papini@sns.it}}

\date\today
\maketitle

\begin{abstract}
In the present article, we introduce a variant of Smoluchowski's coagulation equation with both position and velocity variables taking a kinetic viewpoint arising as the scaling limit of a system of second-order (microscopic) coagulating particles. We focus on the rigorous study of the \abbr{PDE} system in the spatially-homogeneous case proving existence and uniqueness under different initial conditions in suitable weighted space, investigating also the regularity of such solutions.
\end{abstract}

\tableofcontents

\newpage
\section{Introduction}
Coagulation processes are ubiquitous in nature, from the movement of cells to the atmosphere, and in general complicated to understand experimentally and mathematically. In this work, we are motivated by the question whether a turbulent velocity field in the atmosphere enhances the coagulation of small rain droplets, and therefore favors rain fall. Physics literature on this topic has been vast, at least as early as Saffman-Turner \cite{SaTu} in the fifties, see also \cite{FFS, PW, Shaw}, arguing in favor of such coagulation enhancement. Here and in subsequent works, we give a novel approach to this problem that is fully mathematical. In particular, we take a kinetic viewpoint and study rigorously a variant of Smoluchowski's coagulation equation with velocity dependence that is akin to Boltzmann equation. It arises as the scaling limit of a system of second-order (microscopic) coagulating particles, modelling the interactions of rain droplets in the clouds, which are subjected to a common noise of transport type. Such a noise, constructed in recent mathematical works \cite{Galeati, FGL}, possesses several characteristics of real turbulence, such as it enhances diffusion of passive scalars. In the present work, we focus on the existence, uniqueness and regularity of this new \abbr{PDE}, after briefly introducing its origin.

Smoluchowski's classical equation \cite{smoluchowski1916} provides a first model for the time evolution of the probability distribution $\{f_m(t,x)\}_{m=1}^\infty$ of diffusing particles
of different sizes (or masses) $m\in\mathbb N$, say in $\T^d:=(\R/\Z)^d$, when they undergo pairwise coagulation with certain coagulation rate $\alpha(m,n)$:
\begin{align*}
    \partial_t f_m(t, x)=\Delta f_m(t,x)+&\sum_{n=1}^{m-1}\alpha(n,m-n)
    f_n(t,x)f_{m-n}(t,x)\\
    -&2    \sum_{n=1}^\infty \alpha(m,n)
    f_m(t,x)f_n(t,x), \quad t>0,\, x\in\T^d, \, m\in\mathbb N.
\end{align*}
The nonlinearity has two parts, a gain term and a loss term. Such a system of equations has been derived from scaling limits of Brownian particle systems by Hammond-Rezakhanlou \cite{HR, HR2}. To model the influence of a large-scale turbulent flow, it is natural to introduce a common noise. If we adopt a transport noise of the type
in \cite{FGL}
\begin{align*}
        \dot{\mathcal W}(t,x)    =\sum_{k\in K}\sigma_k(x)\dot{W}_t^k
\end{align*}
where $\{\sigma_k(x)\}_{k\in K}$ is a countable collection of divergence-free smooth vector fields and $\{W_t^k\}_{k\in K}$ independent one-dimensional Brownian motions, then we get a stochastic version of Smoluchowski's equation (an \abbr{SPDE})

\begin{equation}
\begin{aligned}\label{old-smol}
    d f_m(t,x)=&
    \Delta f_m(t, x)dt  +   \sum_{n=1}^{m-1}\alpha(n,m-n)
    f_n(t,x)f_{m-n}(t,x)dt    \\
    -& 2  \sum_{n=1}^\infty \alpha(m,n)    f_m(t,x)f_n(t,x)dt
    -\sum_{k\in K}\nabla f_m(t,x)\cdot\sigma_k(x)\, dW_t^k\\
    &+ \text{div}\left(\cQ(x,x)\nabla f_m(t,x)\right),\quad m\in\N
\end{aligned}
\end{equation}
where $\cQ(x,x):=\sum_{k\in K}\sigma_k(x)\otimes\sigma_k(x)$ coming from the transport-type noise. Under specific choice of $\{\sigma_k\}_{k\in K}$, we can have that $\cQ(x,x)\equiv \kappa I_d$, for an enhanced diffusion coefficient $\kappa>0$, the so-called ``eddy diffusion" \cite{Boussinesq}. This picture, in its special case of finitely many mass levels $m=1,2,...,M$
and unit coagulation rate $\alpha(m,n)\equiv 1$, has been derived from particle systems in \cite{FH}. Another version of the \abbr{SPDE} with continuous mass variables $m\in\R$ has also been derived from particle systems with mean-field interactions in \cite{AP}.

However, conceptually and mathematically, the most difficult step
in this program
is to verify
that diffusion enhancement leads to
coagulation enhancement, namely, the fast increase of probability densities $f_m$ for $m\gg 1$ (large masses) for large diffusion coefficient case. In fact, the model \eqref{old-smol} turns out to be too crude, and even numerically we cannot verify a coagulation enhancement.

The problem lies in the fact that quick diffusion of masses may not lead to enhanced collision unless the coagulation rate depends on the velocity variable. Otherwise, the masses merely move around. 
We introduce a new system with both position and velocity variables. In the atmospheric physics literature, e.g. \cite{FFS, PW, Shaw, GW}, it is also common to consider cloud particles coagulate with a rate that is proportional to (when $d=3$)
\begin{align*}
    \alpha(m_1,m_2):=|v_1-v_2|(r_1+r_2)^2
\end{align*}
where $v_i$, $i=1,2$ are the velocities of two colliding rain droplets and $r_i:=m_i^{1/3}$, $i=1,2$ their respective radius. Under certain simplifications, it leads to the following kinetic version of Smoluchowski's equation (cf. Appendix \ref{appen-particle})
\begin{align}\label{main-eq-intro}
\begin{cases}
\partial_t f_m (t,x,v)
= -v\cdot \nabla_xf_m+c(m)\text{div}_v\left(vf_m\right)+\kappa c(m)^2\Delta_vf_m + Q_m(f,f)\\[5pt]
f_m|_{t=0}=f^0_m(x,v),  \quad m=1,...,M,
\end{cases}
\end{align}
where $(t,x,v)\in[0,T]\times\T^d\times\R^d$, and
\begin{equation}\label{collison-term}
\begin{aligned}
Q_m(f,f)(t,x,v) &:= \sum_{n=1}^{m-1}\iint_{\{nw'+(m-n)w=mv\}} s(n,m-n)f_n(t,x,w')f_{m-n}(t,x,w)|w-w'|dwdw'\\
&\quad -2\sum_{n=1}^M \int s(n,m)f_m(t,x,v)f_n(t,x,w)|v-w|dw,
\end{aligned}
\end{equation}
where 
\begin{align}\label{mass-const}
c(m):=\alpha m^{(1-d)/d}, \quad s(n,m):=(n^{1/d}+m^{1/d})^{d-1}.
\end{align}
In Appendix \ref{appen-particle}, we sketch the proof of the scaling limit from a coagulating microscopic particle system subjected to a common noise, to an \abbr{SPDE} that eventually gives rise to this \abbr{PDE}. Although it is not fully rigorous, it should justify the interest of this equation. Here again, turbulence contributes to large $\kappa$ versus small $\kappa$ when no turbulence. The eddy diffusion occurs now in the velocity variable.

The aim of our research is two-fold. Theoretically, we are interested in proving the well-posedness of the \abbr{PDE} and associated \abbr{SPDE} cf. \eqref{smol-spde}, as well as the passage from the one to the other. Then, both theoretically and numerically we aim to demonstrate that the larger the $\kappa$, i.e. the more intense is the turbulence, the faster masses coagulate. See \cite{FHP} for a first numerical study in this direction. In the present article, we focus on the \abbr{PDE} system \eqref{main-eq-intro} in the spatially-homogeneous case, i.e. by considering the initial conditions $f_0^m$ constant in $x$ for every $m$, we can reduce \eqref{main-eq-intro} to 
\begin{align}\label{reduced-eq-intro}
\begin{cases}
\partial_t f_m (t,v)
= c(m)\text{div}_v\left(vf_m(t,v)\right)+\kappa c(m)^2\Delta_vf_m(t,v) + Q_m(f,f)(t,v)\\[5pt]
f_m|_{t=0}=f^0_m(v), \quad m=1,...,M,
\end{cases}
\end{align}
where $(t,v)\in[0,T]\times\R^d$ and $Q_m(f,f)$ is as in \eqref{collison-term} but without the $x$-dependence, and we
prove existence, uniqueness and regularity of the solutions of \eqref{reduced-eq-intro}, for every fixed $\kappa>0$. 

Denote a weighted $L^p$ space
\begin{align}\label{weighted-Lp}
    L^p_k(\R^d):=\left\{f:\R^d\rightarrow\R\ \text{ s.t. }\ f\brak{v}^k\in L^p(\R^d)\right\}, \quad p\in[1,\infty],\, k\in\N,
\end{align}
where 
\[
\brak{v}:=\sqrt{1+|v|^2}
\]
and a weighted Sobolev space
\begin{align}\label{weighted-sobolev}
    H^n_k(\R^d):=\left\{f\in L^2_k\ \text{ s.t. } \nabla^\ell f\in L^2_k(\R^d), \; \forall 1\leq \ell\leq n\right\}.
\end{align}
The main result of this article is as follows.
\begin{theorem}
Fix any finite $T$ and $\kappa>0$. 
Suppose that initial conditions $f_m^0(v)\in (L^2\cap L^1_2)(\R^d)$ and nonnegative, for every $m=1,...,M$, then there exists at least one nonnegative solution in the class $L^\infty\left([0,T]; L^1_2(\R^d)\right)^{\otimes M}$. 

If the initial conditions $f_m^0(v)\in (H^1_1\cap L^1_2)(\R^d)$ and nonnegative, then there exists a unique nonnegative solution, and in this case $f_m(t)\in C_b^\infty(\R^d)$ for any $t>0$.
\end{theorem}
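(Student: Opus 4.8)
The plan is to treat the two assertions separately, following the classical pattern for kinetic coagulation equations (Ball–Carr, Escobedo–Mischler) but adapted to the velocity-diffusion structure here. For the existence part, I would set up a truncated/regularized problem: replace the quadratic collision kernel by a bounded approximation $Q_m^{(R)}$ obtained by capping $|v-w|$ at height $R$ (or by truncating the initial data to compact velocity support), so that for each fixed $R$ the system becomes a semilinear parabolic system in $\R^d$ with linear drift $c(m)\,\mathrm{div}_v(v\,\cdot)$ and diffusion $\kappa c(m)^2\Delta_v$. The associated linear semigroup is the (mass-dependent) Ornstein–Uhlenbeck semigroup, which is positivity-preserving and contractive on the relevant $L^1$ and $L^2$ spaces; a standard fixed-point argument on a short time interval then produces a unique nonnegative mild solution for the truncated system, and the $L^1$-mass control (using that the gain and loss terms nearly cancel under integration in $v$ after summing over $m$, with the OU drift contributing a controllable term) lets one iterate to all of $[0,T]$.

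The core of the existence argument is then the uniform-in-$R$ a priori bound in $L^\infty([0,T];L^1_2)$. Here I would multiply the $m$-th equation by $\brak{v}^2$, integrate in $v$, and sum against $m$. The OU drift term $c(m)\int \brak{v}^2\,\mathrm{div}_v(vf_m)\,dv = -c(m)\int (\nabla_v\brak{v}^2)\cdot v\, f_m\,dv$ produces a term bounded by $C\int\brak{v}^2 f_m$, hence is absorbed by Grönwall; the diffusion term $\kappa c(m)^2\int \brak{v}^2\Delta_v f_m = \kappa c(m)^2\int (\Delta_v\brak{v}^2) f_m$ is likewise controlled since $\Delta_v\brak{v}^2$ is bounded. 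For the collision term one uses the standard weak formulation: $\sum_m\int \brak{v}^2 Q_m(f,f)$ after the change of variables $mv = nw' + (m-n)w$ gives a difference $\brak{(nw'+(m-n)w)/m}^2 - \brak{w'}^2$-type weight, and since mass is conserved ($\sum m\,f_m$) while the second moment's \emph{growth} from coagulation is controlled by sub-additivity of $v\mapsto\brak{v}^2$ along the collision constraint, one gets a bound of the form $\frac{d}{dt}\sum_m\int\brak{v}^2 f_m \le C(1+\sum_m\int\brak{v}^2 f_m)^2$, which together with the $L^1$-mass bound closes the estimate on $[0,T]$. An $L^2$ bound propagates similarly (multiply by $f_m$, use that the OU drift gives $-\frac{c(m)}{2}\int|\nabla_v f_m|^2\cdot$ wait — actually the drift gives a favorable sign from $\mathrm{div}_v(v\,\cdot)$ up to a $+\frac{d}{2}c(m)\|f_m\|_2^2$ term, and the diffusion gives $-\kappa c(m)^2\|\nabla_v f_m\|_2^2\le0$), leaving the collision term, which is bounded in $L^2$ by $\|f\|_{L^1_1}\|f\|_{L^2_1}$-type products via Young's inequality for the convolution-like structure. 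With these uniform bounds, weak-$*$ compactness in $L^\infty([0,T];L^1_2)$ plus the velocity-regularization from the diffusion (giving strong $L^1_{loc}$ compactness in $(t,v)$, e.g. via an Aubin–Lions argument on the truncated equations) lets one pass to the limit $R\to\infty$ in the weak formulation, and nonnegativity is preserved.

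For the uniqueness-and-regularity part under $f^0_m\in H^1_1\cap L^1_2$: uniqueness I would prove by a Grönwall/stability estimate in $L^1_1$ (or $L^1_2$) on the difference $g_m := f_m - \tilde f_m$ of two solutions. The OU drift and diffusion terms contribute controllably (the diffusion term, tested against $\mathrm{sgn}(g_m)\brak{v}$, is $\le 0$ plus a $\brak{v}$-weight correction; the drift contributes $\lesssim \|g_m\|_{L^1_1}$), while the collision difference is bilinear and estimated by $\|f\|_{L^1_1}+\|\tilde f\|_{L^1_1}$ times $\sum_m\|g_m\|_{L^1_1}$, using the already-established $L^1_2$ bounds for both solutions; Grönwall then gives $g\equiv 0$. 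For the $C^\infty_b$ regularity and propagation of $H^n_k$ norms, I would bootstrap: the equation is $\partial_t f_m - \kappa c(m)^2\Delta_v f_m = c(m)\,\mathrm{div}_v(vf_m) + Q_m(f,f)$, a forced heat equation in $v$; given the $L^\infty_t L^1_2\cap L^\infty_t L^2$ control, one differentiates in $v$, tests against $\nabla^\ell f_m$ weighted by $\brak{v}^{2k}$, and uses parabolic smoothing — the crucial point being that each $v$-derivative of $Q_m(f,f)$ only involves $\nabla_v f$ of the \emph{same} order (the kernel $|v-w|$ is Lipschitz, so $\nabla_v Q_m$ produces $\nabla_v f_m$ linearly plus lower-order terms), so the energy inequality for $\|\nabla^\ell f_m\|_{L^2_k}$ closes once the lower orders are controlled. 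Iterating in $\ell$ and using that the OU semigroup is instantaneously smoothing (so any $t>0$ already gives $f_m(t)\in H^n_k$ for all $n$, hence $C^\infty_b$ by Sobolev embedding once enough weights are in place) yields the stated smoothness.

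The main obstacle I expect is the uniform second-moment ($L^1_2$) a priori bound: one must check carefully that the combination of the velocity-confining OU drift (which tends to concentrate mass near $v=0$, helpful) and the collision term (where coagulation of two fast droplets can \emph{increase} kinetic energy, since the constraint $mv=nw'+(m-n)w$ is on momentum, not energy) still yields a closed differential inequality on $[0,T]$ — this is where the finiteness $M<\infty$ of mass levels and the precise form of $s(n,m)$ and $c(m)$ must be used, and where a naive estimate would blow up.
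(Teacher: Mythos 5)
Your existence plan matches the paper's almost exactly: truncate the kernel (the paper uses indicator functions $\chi_R(v)\chi_R(w)$ rather than capping $|v-w|$, but this is cosmetic), do a fixed-point argument in $L^\infty_t L^2_v$ via the Ornstein--Uhlenbeck semigroup, verify positivity preservation, derive uniform-in-$R$ moment and weighted-$H^n_k$ estimates, and pass to the limit with a weighted Aubin--Lions argument. The regularity bootstrap is also essentially the paper's Lemma~\ref{lem:H^n_k} together with the Leibniz-type rule for $Q_m$ of Proposition~\ref{ppn:lebeniz} (your ``Lipschitz-kernel'' justification is slightly off: after the change of variables the $v$-dependence sits entirely inside $f_n,f_{m-n}$, so no derivative falls on $|v-w|$ at all --- but the conclusion, no loss of derivatives, is the same).

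Your uniqueness argument, however, has a genuine gap. You propose a Gr\"onwall estimate on $h_m := f_m - g_m$ in $L^1_1$ (or $L^1_2$), asserting that the collision difference ``is bilinear and estimated by $(\|f\|_{L^1_1}+\|g\|_{L^1_1})\sum_m\|h_m\|_{L^1_1}$'' using the known $L^1_2$ bounds. But the factor $|v-w|$ in the kernel raises the weight by one unit: after changing variables $v'=\varphi(v,w)$ in the gain term, a typical contribution is
\[
\iint |h_n(v')|\,H_\ell(w)\,|v'-w|\,\bigl(\brak{v'}+\brak{w}\bigr)\,dw\,dv',
\]
and $|v'-w|\brak{v'}\le\brak{v'}^2\brak{w}$ produces $\|h_n\|_{L^1_2}\|H_\ell\|_{L^1_1}$, \emph{not} $\|h_n\|_{L^1_1}\|H_\ell\|_{L^1_2}$. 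Working in $L^1_2$ pushes the same obstruction to $L^1_3$, and so on: a naive bilinear bound on the gain term alone never closes, no matter how many moments you assume on $f,g$. The paper's proof closes the loop with a precise \emph{cancellation}: after multiplying by $\psi_\ep(h_m)\brak{v}^2$, summing over $m=1,\dots,M$, and reindexing the gain sum, the worst gain contribution $\sum_m H_m(w)\sum_n|h_n(v)|\brak{v}^2|v-w|$ is \emph{exactly} offset (as $\ep\to0$) by the favorable-sign piece of the loss term $-\sum_m H_m(w)\sum_n|h_n(v)|\brak{v}^2|v-w|$, leaving only $\sum_m H_m(w)\sum_n|h_n(v)|\brak{w}^2|v-w|\le\sum_m H_m(w)\brak{w}^3\sum_n|h_n(v)|\brak{v}$, which is controlled by $\|H\|_{L^1_3}\|h\|_{L^1_2}$ and closes by the moment bounds. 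This cancellation requires (i) the weighted $L^1$ setting (not $L^2$), (ii) summing over all $m$ before estimating, and (iii) keeping track of the sign of the loss term rather than taking absolute values immediately --- none of which is present in your sketch, and each of which the paper explicitly singles out as essential. Without this step your Gr\"onwall inequality does not close, and uniqueness is not established.
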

The most difficult part in our opinion is uniqueness, due mainly to the presence of $|w-w'|$ in the nonlinear term, and the fact that the velocity variable $v\in\R^d$ is unbounded, hence in the presence of Laplacian, $f_m(t,v)$ is never compactly supported even if starting with so. These together with the fact that we have a system rather than just one equation, cause a severe difficulty in closing a Gronwall inequality for uniqueness. As far as we are able, the weighted $L^1$ space is the only one in which a Gronwall argument can work, even if one is willing to assume that solutions are Schwartz functions. (The problem is related to integrability rather than smoothness.) Indeed, with weighted $L^1$ we can find certain cancellations that remove those terms with higher weights brought by the kernel, and this seems not achievable with other spaces such as weighted $L^2$. Equally essential to this cancellation is considering the sum over the norms of all the densities $f_m$, $m=1,..,M$, rather than treating them individually. Indeed, this is already essential to derive various apriori estimates.

On the other hand, existence is proved by constructing a family of approximating equations each corresponding to a truncation of the kernel $|w-w'|$ in the nonlinearity. These approximating problems are more amenable to study since the difficulty related to the kernel is no longer severe, for each fixed truncation parameter.

There is an unexpected connection to the vast area of Boltzmann equations \cite{Villani}. Our \eqref{main-eq-intro} may be viewed as a Boltzmann-type equation with perfectly inelastic collision, rather than the classical elastic collision. Indeed, it is derived from particles undergoing pairwise coagulation, hence two particles merge into one based on the principle of conservation of momentum (and not energy). It is also local in nature in that the nonlinearity acts on the velocity variable, per $(t,x)$. The closest works in the Boltzmann literature seem to be the ones on excited granular media, see \cite{GPV} and references therein, and on multi-species Boltzmann equations, see \cite{BD} and references therein. In a sense our equation combines the features of both of them. From a technical point of view, the aforementioned difficulty with uniqueness are also present in \cite{MW, GPV} and some references therein, and we have learned from these sources. On the other hand, there are various differences that set our model apart from these references. Since $M<\infty$, we do not have the conservation of mass and momentum, and we do not expect nontrivial stationary solutions -- indeed all $f_m(t)$ should decay to zero as $t\to\infty$ (i.e. eventually all masses are transferred out of the system). In general, our nonliearity $Q_m(f,f)$ does not enjoy any particular kind of symmetry. Our derivation of the apriori estimates is also quite different, in particular we need not invoke entropy estimates and Povzner-type inequalities, as are standard in the Boltzmann literature.

Without loss of generality and simplifying notations, in the main part of the paper, we set $\kappa=1$, $c(m)=1$, $s(n,m)=1$.

\section{Deterministic Smoluchowski system with Velocity}\label{sec:apriori-est}
\subsection{Non-local Smoluchowski Equations}
Recall our system of nonlinear integral-differential equations for $\{f_m(t,x,v)\}_{m=1}^M$ defined as:
\begin{align}\label{main-eq}
\begin{cases}
\partial_t f_m = -v\cdot \nabla_xf_m+\text{div}_v\left(vf_m\right)+\Delta_vf_m + Q_m(f,f)\\[5pt]
f_m|_{t=0}=f^0_m
\end{cases}
,\ \quad m=1,...,M
\end{align}
where $(t,x,v)\in [0,T]\times \mathbb{T}^d\times \R^d$. The nonlinear term $Q_m(f,f)$, representing the masses interactions is given by \eqref{collison-term}.
Since we expect the system to represent density of particles with different masses we take $f^0_m\geq 0,\forall\  m=1,...,M$. 

We focus on the case when the initial condition is constant in the space variable $x$, and so this property pass to the solution of $(\ref{main-eq})$. We get a new set of equations solved by $f_m(t,v), m=1,...,M$, where we omit the subscript $v$ in the derivatives in the sequel (since there is no ambiguity)
\begin{align}\label{main-eq1}
\begin{cases}
\partial_t f_m = \text{div}\left(vf_m\right)+\Delta f_m + Q_m(f,f)\\[5pt]
f_m|_{t=0}=f^0_m
\end{cases}
, \quad m=1,...,M
\end{align}
where $(t,v)\in [0,T]\times \R^d$ and $f^0_m\geq 0,\ \forall\ m$.
By a change of variables
\[
w'=\varphi(v,w):=\frac{mv-(m-n)w}{n}, 
\]
the nonlinear term can be written as:
\begin{align*}
Q_m(f,f)(t,v) &:= \sum_{n=1}^{m-1}\iint_{\{nw'+(m-n)w=mv\}} f_n(t,w')f_{m-n}(t,w)|w-w'|dwdw'\\
&\quad -2\sum_{n=1}^M \int  f_m(t,v)f_n(t,w)|v-w|dw\\
&\ =\sum_{n=1}^{m-1}\int f_n\left(t,\varphi(v,w)\right)f_{m-n}(t,w)\left(\frac{m}{n}\right)^2|v-w|dw\\
&\quad -2\sum_{n=1}^M \int f_m(t,v)f_n(t,w)|v-w|dw.
\end{align*}
We will consider the following notion of weak solutions to \eqref{main-eq1}.
\begin{definition}[$L^\infty_t L_v^1$-weak solution]\label{def:weak-sol}
Fix $T>0$, and $f^0_m\in L^1_2\cap L^2$. A solution for equation $(\ref{main-eq1})$ is a set of functions $f_m\in L^\infty(0,T;L_2^1(\R^d))$ indexed by the masses $m=1,...,M$ such that $\forall m,\ Q_m(f,f)\in L^\infty(0,T;L^1(\R^d))$ and the following holds:
\begin{align*}
   \sum_{m=1}^M\brak{f_m(t),\phi}&-\sum_{m=1}^M\brak{f_m^0,\phi}=\int_0^t \sum_{m=1}^M\brak{f_m(s),\Delta\phi}ds\\
   &-\int_0^t\sum_{m=1}^M\brak{f_m(s),v\cdot\nabla\phi}ds
   + \int_0^t \sum_{m=1}^M\brak{Q_m(f(s),f(s)),\phi}ds,
\end{align*}
for a.e. $t\in [0,T]$, and $\forall\phi\in C^\infty_c(\R^d)$.
\end{definition}
\subsection{A priori estimate and conserved quantities for system $(\ref{main-eq1})$}
We prove for the system with finite number of masses a decay in time of the total mass and the conservation of such quantity in the infinite system.

Our first result is valid even in the system with space variable, hence we state it more generally.
\begin{lemma}\label{lem:conserved}
Suppose that $f_m^0(x,v)\in L^1(\T^d\times\R^d)$ and $\{f_m(t)\}_{m=1}^M$ is a nonnegative solution of the equation $(\ref{main-eq})$, such that $f_m(t)$ and its gradient $\nabla f_m(t)$ have a fast decay at infinity. Then if $M<\infty$, then the quantity 
\begin{align}\label{total_rain}
\mathcal T(t):=\iint \sum_{m=1}^M m\, f_m(t,x,v)dxdv
\end{align}
is non-increasing in $t$; if $M=\infty$, then it is constant in $t$.
\end{lemma}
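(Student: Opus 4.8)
The plan is a test-function argument. Multiply the $m$-th equation of \eqref{main-eq} by the constant weight $m$, sum over $m=1,\dots,M$, and integrate over $\T^d\times\R^d$; since by hypothesis $f_m(t)$ and $\nabla f_m(t)$ decay fast in $v$, differentiation under the integral sign is legitimate and
\[
\frac{d}{dt}\mathcal T(t)=\sum_{m=1}^M m\iint_{\T^d\times\R^d}\Big(-v\cdot\nabla_x f_m+\div_v(vf_m)+\Delta_v f_m+Q_m(f,f)\Big)\,dx\,dv .
\]
The three linear terms then drop out. The transport term gives $\iint v\cdot\nabla_x f_m\,dx\,dv=\int_{\R^d}v\cdot\big(\int_{\T^d}\nabla_x f_m\,dx\big)\,dv=0$ by $x$-periodicity. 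The drift and diffusion terms vanish after integration in $v$ by the divergence theorem, $\int_{\R^d}\div_v(vf_m)\,dv=0$ and $\int_{\R^d}\Delta_v f_m\,dv=0$, the boundary contributions on $\{|v|=R\}$ going to $0$ as $R\to\infty$ precisely because $f_m$ decays faster than $|v|^{-d}$ and $\nabla_v f_m$ faster than $|v|^{1-d}$. What remains is to pin down the sign of $\sum_{m=1}^M m\iint_{\T^d\times\R^d}Q_m(f,f)\,dx\,dv$.

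The mechanism here is conservation of mass within each binary collision, $n+(m-n)=m$. Using the change of variables recorded below \eqref{main-eq1} to integrate out the output velocity $v$ (the one determined by momentum conservation), one obtains, for each $m$,
\[
\iint_{\T^d\times\R^d}Q_m(f,f)\,dx\,dv=\sum_{n=1}^{m-1}a_{n,m-n}-2\sum_{n=1}^{M}a_{m,n},
\]
where $a_{p,q}:=\int_{\T^d}\iint f_p(x,w')f_q(x,w)\,|w-w'|\,dw\,dw'\,dx$; note $a_{p,q}\ge 0$ because $f\ge 0$, and $a_{p,q}=a_{q,p}$ on exchanging the two velocity variables. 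Multiplying by $m$, summing over $m$, re-indexing the gain sum by the ordered pair $(n,k)=(n,m-n)$ so that $m=n+k$, and using $a_{n,k}=a_{k,n}$ to rewrite the weight $(n+k)$ as $2n$ after symmetrisation,
\begin{align*}
\frac{d}{dt}\mathcal T(t)&=\sum_{\substack{n,k\ge 1\\ n+k\le M}}(n+k)\,a_{n,k}-2\sum_{\substack{p,q\ge 1\\ p,q\le M}}p\,a_{p,q}\\
&=2\sum_{\substack{n,k\ge 1\\ n+k\le M}}n\,a_{n,k}-2\sum_{\substack{p,q\ge 1\\ p,q\le M}}p\,a_{p,q}\\
&=-2\sum_{\substack{p,q\ge 1,\ p,q\le M\\ p+q> M}}p\,a_{p,q}\ \le\ 0 .
\end{align*}
For $M<\infty$ this is the asserted monotonicity: the pairs $(p,q)$ with $p+q>M$ are exactly the collisions whose product exceeds the largest mass $M$ and thus leaves the system. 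For $M=\infty$ the last index set is empty, so $\frac{d}{dt}\mathcal T\equiv 0$ and $\mathcal T$ is constant.

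The real work is not this computation but its justification under the stated regularity. The main obstacle is (i) the vanishing of the two boundary terms at $|v|\to\infty$ in the integrations by parts, which is exactly what the fast-decay hypothesis on $f_m$ and $\nabla f_m$ is for, together with (ii) the Fubini interchanges that turn each $v$-integrated gain channel into the collision integral $a_{n,m-n}$ and that reorganise the finite double sum. In the case $M=\infty$ there is the further point (iii) of commuting the infinite sum $\sum_m$ with the integration over $\T^d\times\R^d$ and with $\partial_t$, which requires in addition the finiteness of $\mathcal T(0)=\sum_m m\,\|f_m^0\|_{L^1}$ — something to be read into the decay hypothesis or derived from the a priori bounds of this section. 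Granting these, the displayed identity together with $f\ge 0$ yields both claims simultaneously.
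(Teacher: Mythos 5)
Your proposal is correct and follows essentially the same route as the paper: integrate out the linear terms by parts (using periodicity in $x$ and the decay in $v$), integrate the gain term over the output velocity to reduce $Q_m$ to the quantities $a_{n,m-n}$ and $a_{m,n}$, re-index the gain sum by $(n,k)=(n,m-n)$, and symmetrize $(n+k)\mapsto 2n$. The only difference is cosmetic but worth noting: the paper enlarges the gain sum to $\sum_{n,k=1}^M$ and concludes $\frac{d}{dt}\mathcal T\le 0$ by an inequality with equality iff $M=\infty$, whereas you keep the constraint $n+k\le M$ and arrive at the exact identity $\frac{d}{dt}\mathcal T=-2\sum_{p+q>M}p\,a_{p,q}$, which identifies precisely the outgoing flux of mass to sizes exceeding $M$ and slightly sharpens the statement (cf.\ Remark~\ref{mass-leave}).
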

\begin{rmk}\label{mass-leave}
It is not clear how to relate the finite- and infinite-level systems. We only work with $M<\infty$. Since $t\mapsto \mathcal T(t)$ is decreasing in this case, we interpret $\mathcal T(0)-\mathcal T(t)$ as the amount of mass falling out of the system. It can be viewed as an indicator of how efficient the coagulation is (by transferring densities from small masses to large masses, see \cite{FHP}).
\end{rmk}
\begin{proof}
Consider first the nonlinear part. For every fixed $t,x$ we have that 
\begin{align*}
&\int\sum_{m=1}^M m\, Q_m(f,f)dv\\
&= \sum_{m=1}^M \sum_{n=1}^{m-1}m\int dv \iint_{\{nw+(m-n)w'=mv\}} f_n(t,x,w)f_{m-n}(t,x,w')|w-w'|dwdw'\\
&\quad -2\sum_{m=1}^M\sum_{n=1}^M m\iint f_m(t,x,v)f_n(t,x,w)|v-w|dwdv\\
&=\sum_{m=1}^M \sum_{n=1}^{m-1}m\iint f_n(t,x,w)f_{m-n}(t,x,w')|w-w'|dwdw'\\
&\quad -2\sum_{m=1}^M\sum_{n=1}^M m\iint f_m(t,x,v)f_n(t,x,w)|v-w|dwdv
\end{align*}
Exchanging the order of summations in the first term, and setting $k=m-n$, the above equals
\begin{align*}
&\sum_{n=1}^M \sum_{k=1}^{M-n} (k+n)\iint f_n(t,x,w)f_k(t,x,w')|w-w'| dwdw'\\
&\quad -2\sum_{m=1}^M\sum_{n=1}^M m\iint f_m(t,x,v)f_n(t,x,w)|v-w|dwdv\\
&\le \sum_{n=1}^M \sum_{k=1}^{M} (k+n)\iint f_n(t,x,w)f_k(t,x,w')|w-w'| dwdw'\\
&\quad -2\sum_{m=1}^M\sum_{n=1}^M m\iint f_m(t,x,v)f_n(t,x,w)|v-w|dwdv=0
\end{align*}
where equality is achieved if and only if $M=\infty$. For the linear part, we note that for every $m, t,x$ we have that 
\begin{align*}
\int_{\R^d} \text{div}_v\left(vf_m+\nabla_v f_m\right)dv = 0
\end{align*}
due to integration by parts and $|vf_m|+|\nabla_vf_m|\to0$ as $|v|\to\infty$ for every fixed $t,x$. Also, for every $m,t,v$ we have that 
\begin{align*}
\int_{\R^d} v\cdot\nabla_xf_m dx = 0
\end{align*}
due to integration by parts and $|f_m|\to0$ as $|x|\to\infty$ for every fixed $t,v$. Thus we can conclude since $\mathcal T(t)$ \eqref{total_rain} can be written as
\begin{align*}
\partial_t\mathcal T &= \int_{\R^d}\int_{\R^d} \sum_{m=1}^M m \; \partial_tf_m(t,x,v)dxdv= -\sum_{m=1}^M m\iint v\cdot \nabla_x f_m dxdv \\
&+\sum_{m=1}^M m\iint  \text{div}_v\left(vf_m+\nabla_v f_m\right)dvdx + \iint \sum_{m=1}^M m\, Q_m(f,f)dvdx\le 0
\end{align*}
with equality if and only if $M=\infty$. 
\end{proof}
We specialize now to consider the spatially-homogeneous case, i.e. $f_m(t,x,v)=f_m(t,v)$ independent of $x\in\T^d$.

Recall the weighted $L^p$ spaces $L^p_k(\R^d)$ \eqref{weighted-Lp}
and we will indicate by $L^p_{k,M}$ the space in which the sequence $\{f_m(t)\}_{m=1}^M$ lives, with norm given by 
\begin{align*}
\|f\|_{p,M,k}:=\sum_{m=1}^M\|f_m\|_{p,k}=\sum_{m=1}^M\left(\int |f_m(v)|^p\brak{v}^{pk} dv\right)^{1/p}.
\end{align*}
The reason we are going to introduce such spaces is motivated not only to understand the regularity of the solution of the system in study, but also to deal with the terms $|v-w|$ in the coagulation kernel since, as we have already noted in the explicit formulation of the nonlinearity $Q_m(f,f)$.

There are a few ways to control the norm of such a quantity, and we prove here the main estimates that we apply in all the a priori estimates for the equation.
\begin{proposition}\label{sobolev}
For every $p\in[1,\infty]$ and $k\geq0$, there exists a constant $C$ depending only on $p,k, M$ and the dimension $d$, such that we have the following bound on the nonlinearity
$$
\|Q_m(g,f)\|_{p,k}\leq C\left(\|g\|_{p,M,k+1}\|f\|_{1,M,k+1}\right)
$$
and more general
$$
\|Q(g,f)\|_{p,M,k}\leq MC\left(\|g\|_{p,M,k+1}\|f\|_{1,M,k+1}\right).
$$
\end{proposition}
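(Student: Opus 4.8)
The plan is to estimate each of the two pieces of $Q_m(g,f)$ separately, working term-by-term in the masses and then summing. Write $Q_m = Q_m^+ - Q_m^-$ with the gain term
\[
Q_m^+(g,f)(v) = \sum_{n=1}^{m-1} \int g_n\!\left(\varphi(v,w)\right) f_{m-n}(w)\left(\tfrac{m}{n}\right)^2 |v-w|\,dw
\]
(using the change of variables already recorded in the excerpt, with all constants set to $1$) and the loss term
\[
Q_m^-(g,f)(v) = 2\sum_{n=1}^M g_m(v) f_n(w)\,|v-w|\,dw .
\]
Actually the loss term should read $2\sum_{n=1}^M \int g_m(v)f_n(w)|v-w|\,dw$. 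For both pieces the elementary inequality $\langle v-w\rangle \le \sqrt2\,\langle v\rangle\langle w\rangle$, hence $|v-w| \le 1 + |v-w| \le \sqrt2\,\langle v\rangle\langle w\rangle$, is what converts the kernel $|v-w|$ into a product of one extra weight on the $v$-variable and one extra weight on the $w$-variable; this is the source of the ``$k+1$'' on the right-hand side.

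For the loss term this is immediate: multiplying by $\langle v\rangle^k$ and taking the $L^p_v$ norm,
\[
\|Q_m^-(g,f)\|_{p,k} \le 2\sqrt2 \sum_{n=1}^M \|g_m\|_{p,k+1}\,\|f_n\|_{1,1}
\le 2\sqrt2\,\|g_m\|_{p,k+1}\,\|f\|_{1,M,k+1},
\]
since $\|f_n\|_{1,1}\le\|f_n\|_{1,k+1}$ for $k\ge 0$. For the gain term the key point is a weighted Young-type inequality: for fixed $n$, the map $w \mapsto \varphi(v,w) = \tfrac{mv-(m-n)w}{n}$ is an affine bijection in $v$ for each $w$ (and in $w$ for each $v$) with constant Jacobian, so after inserting $|v-w|\le\sqrt2\langle v\rangle\langle w\rangle$ one estimates
\[
\left\|\langle v\rangle^k \int g_n(\varphi(v,w)) f_{m-n}(w)|v-w|\,dw\right\|_{L^p_v}
\lesssim \left\| \int \langle \varphi(v,w)\rangle^{k} |g_n(\varphi(v,w))|\,\langle w\rangle \,|f_{m-n}(w)|\,dw \right\|_{L^p_v},
\]
where I have also used $\langle v\rangle \lesssim \langle \varphi(v,w)\rangle\langle w\rangle$, which follows from $v = \tfrac{n\varphi(v,w)+(m-n)w}{m}$ and the triangle inequality with constants depending only on $m,n,M$. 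Now apply Minkowski's integral inequality in $L^p_v$ and the change of variables $v\mapsto\varphi(v,w)$ (Jacobian $(n/m)^d$) inside: this yields
\[
\|Q_m^+(g,f)\|_{p,k} \le C(p,k,M,d)\sum_{n=1}^{m-1} \|g_n\|_{p,k+1}\,\|f_{m-n}\|_{1,k+1}
\le C(p,k,M,d)\,\|g\|_{p,M,k+1}\,\|f\|_{1,M,k+1}.
\]
Combining the two bounds gives the first displayed inequality; summing over $m=1,\dots,M$ and bounding each factor by the full $M$-norm gives the second, with an extra factor $M$ absorbed into the constant (the statement writes $MC$).

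The main obstacle, and the only step that requires genuine care rather than bookkeeping, is the weighted Young/Minkowski estimate for $Q_m^+$: one must track that the weight $\langle v\rangle^{k}$ produced on the output splits as $\langle\varphi(v,w)\rangle^k\langle w\rangle^k$ \emph{up to a constant depending only on $M$} — this is where finiteness of $M$ (so that $n, m-n$ range over a bounded set and $m/n$ is bounded) is used — and that after the substitution $v\mapsto\varphi(v,w)$ the $L^p_v$ norm of $g_n$ genuinely appears, not some mixed-norm object, which is exactly what Minkowski's inequality delivers when $p\ge 1$. The case $p=\infty$ is handled the same way with the integral Minkowski inequality replaced by the trivial pointwise bound. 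No cancellation or structure of the nonlinearity is needed here; it is a pure boundedness statement, so crude triangle-inequality estimates on the weights suffice.
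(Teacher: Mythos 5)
Your overall strategy — splitting $Q_m$ into gain and loss, estimating the loss term by trading $|v-w|$ for weights, and estimating the gain term by Minkowski's integral inequality after the affine change of variables $v\mapsto\varphi(v,w)$ — is in essence the paper's own proof: the paper runs the gain-term estimate via $L^p$--$L^{p'}$ duality rather than Minkowski, but the two devices are equivalent. Your loss-term estimate and the summation over $m$ are fine.

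However, the displayed weight inequality in the gain-term estimate is false. You claim
\[
\brak{v}^{k}\,|v-w|\ \lesssim\ \brak{\varphi(v,w)}^{k}\brak{w},
\]
but fixing $w$ and sending $|v|\to\infty$ makes the left side grow like $|v|^{k+1}$ while the right grows only like $|v|^{k}$ (since $\varphi(v,w)=\tfrac{m}{n}v+O(1)$ for fixed $w$), so the inequality cannot hold. Tracking all the factors honestly using only the multiplicative bounds you invoke, namely $|v-w|\lesssim\brak{v}\brak{w}$ and $\brak{v}\lesssim\brak{\varphi}\brak{w}$, produces $\brak{v}^{k}|v-w|\lesssim\brak{\varphi}^{k+1}\brak{w}^{k+2}$, which after Minkowski and the change of variables gives $\|g_n\|_{p,k+1}\|f_{m-n}\|_{1,k+2}$ — one power too many on $f$. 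So the multiplicative bookkeeping genuinely overshoots the claimed exponent.

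The way to land on $k+1$ is to exploit the additive structure: $v=\tfrac{n}{m}\varphi(v,w)+\tfrac{m-n}{m}w$ is a convex combination, so by convexity of $x\mapsto\brak{x}^{j}$ one has $\brak{v}^{j}\le\tfrac{n}{m}\brak{\varphi}^{j}+\tfrac{m-n}{m}\brak{w}^{j}\le\brak{\varphi}^{j}+\brak{w}^{j}$ for every $j\ge 1$. Combined with $|v-w|\le\brak{v}+\brak{w}$ this gives
\[
\brak{v}^{k}|v-w|\ \le\ \brak{v}^{k+1}+\brak{v}^{k}\brak{w}\ \lesssim\ \brak{\varphi}^{k+1}+\brak{w}^{k+1}+\brak{\varphi}^{k}\brak{w}+\brak{w}^{k+1},
\]
every monomial of which has exponent at most $k+1$ in each variable separately. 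Feeding each piece through Minkowski and the substitution $z=\varphi(v,w)$ then yields exactly $\|g_n\|_{p,k+1}\|f_{m-n}\|_{1,k+1}$, as the proposition requires. Your remark that "crude triangle-inequality estimates on the weights suffice" is therefore somewhat misleading: the multiplicative estimates do \emph{not} suffice, and the additive (Jensen/convexity) bounds for the convex-combination structure and for the kernel are what make the exponent close at $k+1$. The Minkowski/change-of-variables portion of your argument is correct and needs no alteration.
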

\begin{proof}
We prove this with a simple computation, using the duality of the norm for $L^p$ spaces. For simplicity of notation during the proof we omit whenever is possible the appendix $M$ on the norm. Fix $m\in \{1,...,M\}$ and consider $Q_m(g,f)$ for $(g_n)_{n=1,...,M}$ and $(f_n)_{n=1,...,M}$:
\begin{align*}
    \|Q_m(g,f)\|_{p,k}&=\sup_{\phi\in L^{p'},\ \|\phi\|_{p'}=1}\int Q_m(g,f)\brak{v}^{k}\phi(v)dv,
\end{align*}
where $1/p+1/p'=1$. We now work directly on each term of the summation in the nonlinearity both in the positive and negative part. For the negative part we have:
\begin{align*}
    \iint g_m(v)f_n(w)|v-w|\brak{v}^k\phi(v)dwdv&\leq \iint g_m(v)f_n(w)\brak{v}^{k+1}\brak{w}\phi(v)dwdv\\
    &=\|f_n||_{1,1}\int g_m(v)\brak{v}^{k+1}\phi(v)dv\\
    &\underbrace{\leq}_{\|\phi\|_{p'}=1} \|g_m\|_{p,k+1}\|f_n||_{1,1}\leq \|g\|_{p,k+1}\|f\|_{1,k+1}.
\end{align*}
And now for the positive:
\begin{align*}
    &\iint c_m^n g_n(h(v,w))f_{m-n}(w)|v-w|\brak{v}^k\phi(v)dwdv\\
    &\lesssim \iint g_n(h)f_{m-n}(w)\brak{v}^{k+1}\brak{w}\phi(v)dwdv\\
    &\underbrace{=}_{h(v,w)=z}\iint g_n(z)f_{m-n}(w)\brak{\tilde{h}(z,w)}^{k+1}\brak{w}\phi(\tilde{h})dwdv\\
    &\leq \iint g_n(z)f_{m-n}(w)(\brak{z}^{k+1}\brak{w}+\brak{w}^{k+1})\phi(\tilde{h})dwdv\\
    &=\iint g_n(z)f_{m-n}(w)\brak{z}^{k+1}\brak{w}\phi(\tilde{h})dwdv+\iint g_n(z)f_{m-n}(w)\brak{w}^{k+1}\phi(\tilde{h})dwdv\\
    &=\int f_{m-n}(w)\int g_n(z)\brak{z}^{k+1}\phi(\tilde{h})dv\brak{w}dv+\int f_{m-n}(w)\brak{w}^{k+1}\int g_n(z)\phi(\tilde{h})dvdw\\
    &\underbrace{\leq}_{\text{H\"older\ inequality}}\int f_{m-n}(w)\|\brak{w}\|g_n\|_{p,k+1}dv+\int f_{m-n}(w)\brak{w}^{k+1}\|g_n\|_{p}dw\\
    &\underbrace{\leq}_{\text{Translation\ invariance}} 2\|g_n\|_{p,k+1}\|f_{m-n}||_{1,k+1}\leq 2\|g\|_{p,k+1}\|f\|_{1,k+1}.
\end{align*}
Putting everything together we conclude the proof.
\end{proof}
We begin now proving the a priori estimates for the standard moments of regular enough solution to the system of coagulation equations.
\begin{lemma}\label{lem:moments}
For any $\ell\in\mathbb N$, suppose that $f_m^0\in L^1_\ell$ and $\{f_m(t)\}_{m=1}^M$ is a nonnegative solution of the equation $(\ref{main-eq1})$, such that $f_m(t)$ and its gradient $\nabla f_m(t)$ have a fast decay at infinity. Then, there exists some finite constant $C_\ell$ depending only on $\ell, d,$ and the initial data $\{f_m(0)\}_{m=1}^M$ such that for any $t\ge 0$, 
\begin{align}\label{eq:moment}
\sum_{m=1}^M\int  |v|^{\ell}f_m(t,v)dv\le C_\ell.
\end{align}
\end{lemma}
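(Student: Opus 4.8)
The plan is to derive a differential inequality for the quantity $Y_\ell(t) := \sum_{m=1}^M \int |v|^\ell f_m(t,v)\,dv$ and then close it by a Gronwall-type argument (in fact, it will turn out to be a linear differential inequality, so no blow-up occurs). First I would test the equation \eqref{main-eq1} for $f_m$ against the weight $|v|^\ell$, sum over $m$ from $1$ to $M$, and treat the linear and nonlinear contributions separately. For the linear part, integration by parts (justified by the assumed fast decay of $f_m$ and $\nabla f_m$) gives
\begin{align*}
\int |v|^\ell \,\mathrm{div}(vf_m)\,dv = -\ell \int |v|^\ell f_m\,dv, \qquad
\int |v|^\ell \Delta f_m\,dv = \ell(\ell+d-2)\int |v|^{\ell-2} f_m\,dv,
\end{align*}
so the drift term contributes $-\ell\, Y_\ell(t)$, which is favorable (dissipative), and the Laplacian contributes a lower-order term controlled by $Y_{\ell-2}(t)$, which by interpolation (or Young's inequality, using that $f_m \in L^1$) is bounded by $\epsilon Y_\ell(t) + C_\epsilon Y_0(t)$, and $Y_0(t)\le Y_0(0) =: \sum_m\|f_m^0\|_{L^1}$ is controlled by Lemma \ref{lem:conserved} (or directly, since the total mass is non-increasing).

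The heart of the matter is the nonlinear term $\sum_{m=1}^M \int |v|^\ell Q_m(f,f)\,dv$. Here I would exploit exactly the cancellation structure already used in the proof of Lemma \ref{lem:conserved}: exchanging the order of summation in the gain term and substituting $k = m-n$, the gain contributes $\sum_{n,k\,:\,n+k\le M}\iint |{\tfrac{nw'+kw}{n+k}}|^\ell f_n(w')f_k(w)|w-w'|\,dw\,dw'$, while the loss contributes $-2\sum_{m,n}\iint |v|^\ell f_m(v)f_n(w)|v-w|\,dw\,dv$. Using convexity of $v\mapsto |v|^\ell$ for $\ell\ge 1$, one has $|\tfrac{nw'+kw}{n+k}|^\ell \le \tfrac{n}{n+k}|w'|^\ell + \tfrac{k}{n+k}|w|^\ell \le |w'|^\ell + |w|^\ell$; after relabeling and using $|w-w'|\le \langle w\rangle\langle w'\rangle$ this bounds the gain by a constant times $\sum_{n,k}\big(\|f_n\|_{1,\ell+1}\|f_k\|_{1,1} + \|f_k\|_{1,\ell+1}\|f_n\|_{1,1}\big)$. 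Thus the nonlinear term is bounded (after dropping the non-positive loss term, or keeping part of it) by $C\, Y_{\ell+1}(t)\cdot Y_1(t)$-type expressions — but $Y_{\ell+1}$ is a \emph{higher} moment than $Y_\ell$, which would not close.

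The main obstacle, therefore, is this loss of one power of the weight coming from the kernel $|w-w'|$. The resolution is to not discard the loss term: the negative part $-2\sum_{m,n}\iint |v|^\ell f_m(v)f_n(w)|v-w|\,dw\,dv$ is itself of size comparable to $Y_{\ell+1}\cdot Y_0$ up to lower order (since $|v-w|\ge |v| - |w|$), and one shows that the gain term's leading piece is dominated by the loss term's leading piece via the same summation-range argument ($\{n+k\le M\}\subset\{n\le M, k\le M\}$) that gave equality to zero in Lemma \ref{lem:conserved}. Concretely, write $|\tfrac{nw'+kw}{n+k}|^\ell \le |w'|^\ell + \ell(|w'|^{\ell-1} + |w|^{\ell-1})(|w'|+|w|)$ by a Taylor/convexity refinement, so that the $|w'|^\ell$ part of the gain cancels against the $|v|^\ell$ part of the corresponding loss term (with $n$ playing the role of $m$), leaving only genuinely lower-order remainders involving $Y_{\ell-1}, Y_\ell$ and $Y_1$, hence of the form $C(Y_\ell(t) + 1)$ once lower moments are controlled inductively. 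This yields $\frac{d}{dt}Y_\ell(t) \le C_\ell\big(Y_\ell(t) + 1\big)$ with $C_\ell$ depending only on $\ell, d, M$ and $\sum_m(\|f_m^0\|_{1,\ell} + \|f_m^0\|_{1})$, and Gronwall gives $Y_\ell(t)\le (Y_\ell(0)+1)e^{C_\ell T}$ on $[0,T]$. I would run this as an induction on $\ell$ (the cases $\ell = 0, 1$ being immediate from Lemma \ref{lem:conserved} and its proof), so that at stage $\ell$ all moments $Y_j$ with $j<\ell$ are already known to be finite and bounded, which is what makes the remainder terms harmless. Since $|v|^\ell \le \langle v\rangle^\ell$ and conversely $\langle v\rangle^\ell \lesssim 1 + |v|^\ell$, the bound \eqref{eq:moment} follows, with the constant $C_\ell$ in fact uniform on $[0,T]$ for finite $T$; strictly speaking it may grow in $T$, which is consistent with the statement.
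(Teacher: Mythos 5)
Your overall strategy (test against $|v|^\ell$, integrate by parts, induct on $\ell$, and look for cancellation between gain and loss in the nonlinearity) is the correct one and matches the paper's approach in outline. But there are two substantive gaps that separate your sketch from a complete proof.

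\textbf{The nonlinear term is entirely nonpositive, not ``partially cancelling.''} After applying Jensen to the gain, $\left|\tfrac{nw+(m-n)w'}{m}\right|^{2k}\le|w|^{2k}+|w'|^{2k}$, and re-indexing the double sum over $\{1\le n<m\le M\}$ as $\{n\ge1,\,\ell=m-n\ge1,\,n+\ell\le M\}$, the gain is bounded above by the \emph{full} sum over $n,\ell\in\{1,\dots,M\}$, which by symmetry in $(w,w')$ is exactly $2\sum_{m,n}\iint|v|^{2k}f_m(v)f_n(w)|v-w|\,dw\,dv$ --- i.e.\ exactly the loss term. So $\sum_m\int|v|^{2k}Q_m(f,f)\,dv\le0$. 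Your ``Taylor/convexity refinement'' $|\tfrac{nw'+kw}{n+k}|^\ell\le|w'|^\ell+\ell(|w'|^{\ell-1}+|w|^{\ell-1})(|w'|+|w|)$ is not justified as stated, and the claimed cancellation ``$|w'|^\ell$ against $|v|^\ell$ with $n$ playing the role of $m$'' is loose: it doesn't account for the fact that the loss carries a factor $2$ and runs over all $n\le M$, which is precisely what saves the argument in the paper. You correctly sensed a cancellation is needed but reached for a more complicated and less conclusive device when a clean domination is available.

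\textbf{The Gronwall argument must produce a uniform-in-time bound.} The lemma claims a constant $C_\ell$ valid for \emph{all} $t\ge0$, not just on $[0,T]$. Your final inequality $\tfrac{d}{dt}Y_\ell\le C_\ell(Y_\ell+1)$ yields only $Y_\ell(t)\lesssim e^{C_\ell t}$, which blows up; your closing remark that this ``is consistent with the statement'' is not correct. Once the nonlinear term is known to be $\le0$, the inequality becomes $\tfrac{d}{dt}Y_{2k}\le-2k\,Y_{2k}+2k(2k+d-2)C_{2(k-1)}$, and the dissipative term $-2k\,Y_{2k}$ (which you noticed but did not use in the end) gives $Y_{2k}(t)\le Y_{2k}(0)+(2k+d-2)C_{2(k-1)}$, uniformly in $t$. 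Finally, the paper first treats even $\ell=2k$ (so $|v|^{2k}$ is smooth and $\Delta|v|^{2k}=2k(2k+d-2)|v|^{2k-2}$ causes no trouble at the origin) and then sandwiches odd moments between $Y_0$ and $Y_{2k}$; your proposal works with general $\ell$ throughout and would need to address the singularity of $|v|^{\ell-2}$ at $v=0$ when $\ell$ is small and odd.
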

\begin{proof}
We first consider $\ell=2k$ even, and prove by induction on $k\in\N$. The case $k=0$ follows from  Lemma \ref{lem:conserved}. Now assume $k\ge 1$ and assume \eqref{eq:moment} is proved for the case $\ell=2(k-1)$. Note that for each $m=1,...,M$, by \eqref{main-eq1}
\begin{align*}
&\partial_t\int  |v|^{2k}f_m(t,v)dv=\int  |v|^{2k}\partial_tf_m(t,v)dv\\
=&\int |v|^{2k} \text{div}_v\left(vf_m\right)dv+\int |v|^{2k} \Delta_v f_mdv\\
& +\int dv \sum_{n=1}^{m-1}\iint_{\{nw+(m-n)w'=mv\}}|v|^{2k} f_n(t,w)f_{m-n}(t,w')|w-w'|dwdw'\\
&-2\int dv \sum_{n=1}^M \int |v|^{2k} f_m(t,v)f_n(t,w)|v-w|dw\\
= & -2k\int|v|^{2k} f_mdv+2k(2k+d-2)\int |v|^{2k-2}f_m dv \\
& + \sum_{n=1}^{m-1}\iint \left|\frac{nw+(m-n)w'}{m}\right|^{2k} f_n(t,w)f_{m-n}(t,w')|w-w'|dwdw'\\
&- 2\sum_{n=1}^M\iint |v|^{2k}  f_m(t,v)f_n(t,w)|v-w|dwdv.
\end{align*}
Above, we assumed that for every $t>0$,
\begin{align*}
|v|^{2k+1}|f_m|\to0, \quad |v|^{2k}|\nabla_vf_m|\to0,  \quad \text{ as }|v|\to\infty 
\end{align*}
such that the boundary terms vanish in the integration by parts. Summing the above in $m=1,...,M$ and noticing by Jensen's inequality 
\begin{align*}
 &\left|\frac{nw+(m-n)w'}{m}\right|^{2k}\le \left(\frac{n|w|+(m-n)|w'|}{m}\right)^{2k}\\
& \le \frac{n}{m}|w|^{2k}+\frac{m-n}{m}|w'|^{2k}\le |w|^{2k}+|w'|^{2k},
\end{align*}
whenever $1\le n\le m-1$, we obtain that 
\begin{align*}
&\partial_t\sum_{m=1}^M\int  |v|^{2k} f_m(t,v)dv\\
&\le  -2k\sum_{m=1}^M\int  |v|^{2k} f_m(t,v)dv+2k(2k+d-2)\sum_{m=1}^M\int |v|^{2k-2}f_m(t,v) dv\\
&+\sum_{m=1}^M \sum_{n=1}^{m-1}\iint \left(|w|^{2k}+|w'|^{2k}\right) f_n(t,w)f_{m-n}(t,w')|w-w'|dwdw'\\
&- 2\sum_{m=1}^M\sum_{n=1}^M\iint |v|^{2k} f_m(t,v)f_n(t,w)|v-w|dwdv\\
&\le -2k\sum_{m=1}^M\int  |v|^{2k} f_m(t,v)dv+2k(2k+d-2) C_{2(k-1)}\\
&+\sum_{n=1}^M \sum_{m=n+1}^M\iint \left(|w|^{2k}+|w'|^{2k}\right) f_n(t,w)f_{m-n}(t,w')|w-w'|dwdw'\\
&- 2\sum_{m=1}^M\sum_{n=1}^M\iint |v|^{2k} f_m(t,v)f_n(t,w)|v-w|dwdv,
\end{align*}
where we used the induction hypothesis that for some finite constant $C_{2(k-1)}$ independent of $t$, 
\begin{align*}
\sum_{m=1}^M\int |v|^{2k-2}f_m(t,v) dv\le C_{2(k-1)}.
\end{align*}
Further, setting $\ell=m-n$ we see that 
\begin{align*}
&\sum_{n=1}^M \sum_{m=n+1}^M\iint \left(|w|^{2k}+|w'|^{2k}\right) f_n(t,w)f_{m-n}(t,w')|w-w'|dwdw'\\
&- 2\sum_{m=1}^M\sum_{n=1}^M\iint |v|^{2k} f_m(t,v)f_n(t,w)|v-w|dwdv\\
&\le\sum_{n=1}^M \sum_{\ell=1}^{M-n}\iint \left(|w|^{2k}+|w'|^{2k}\right) f_n(t,w)f_{\ell}(t,w')|w-w'|dwdw'\\
&- 2\sum_{m=1}^M\sum_{n=1}^M\iint |v|^{2k} f_m(t,v)f_n(t,w)|v-w|dwdv\; \le 0.
\end{align*}
Thus we conclude that 
\begin{align*}
\partial_t\sum_{m=1}^M\int  |v|^{2k}f_m(t,v)dv\le &-2k \sum_{m=1}^M\int  |v|^{2k}f_m(t,v)dv+2k(2k+d-2)C_{2(k-1)}.
\end{align*}
Assuming the initial data is such that
\begin{align*}
A_0(k):=\sum_{m=1}^M\int |v|^{2k}f_m(0,v)dv
\end{align*}
is finite, it follows that for any $t\ge0$ we have that 
\begin{align*}
\sum_{m=1}^M\int  |v|^{2k}f_m(t,v)dv&\le A_0(k)e^{-2kt}+(2k+d-2) C_{k-1}(1-e^{-2kt})\\
&\le A_0(k)+(2k+d-2) C_{2(k-1)}=:C_{2k}.
\end{align*}
This completes the induction for $n=2k$ even. Finally, we turn to the case $n=2k-1$ for $k\ge 1$. Note that 
\begin{align*}
\sum_{m=1}^M\int |v|^{2k-1}f_m(v)dv&\le \sum_{m=1}^M\int_{|v|\le 1}|v|^{2k-1}f_m(v)dv+\sum_{m=1}^M\int_{|v|>1}|v|^{2k-1}f_m(v)dv\\
&\le \sum_{m=1}^M\int f_m(v)dv+\sum_{m=1}^M\int|v|^{2k}f_m(v)dv\le C_0+C_{2k}.
\end{align*}
\end{proof}

\begin{rmk}
Call $A_k:=\sum_{m=1}^M A_k^m:=\sum_{m=1}^M\int f_m\brak{v}^kdv$ and consider $\int Q(f,f)\brak{v}^k dv$. Although we have used other properties, this quantity can be also expanded in the following way:
\begin{align*}
    \int Q(f,f)\brak{v}^k dv\lesssim -2\sum_{m=1}^M\sum_{n=M-n+1}^M\iint f_m(v) f_n(w)|v-w|\brak{v}^k dwdv
\end{align*}
Using Proposition \ref{sobolev} and the modified triangular inequality: $-2|v-w|\brak{v}^k\leq -2\brak{v}^{k+1}+2|w|\brak{v}^k$, we get 
\begin{align*}
    \lesssim -2\sum_{m=1}^M\sum_{n=M-n+1}^M\iint f_m(v) f_n(w)|v-w|\brak{v}^k dwdv\leq 2 A_1 A_k-2A_{k+1}C_k^M
\end{align*}
where $C_k^M:=\inf_t \int f_M(w)dw\geq0$. This give us an equation for $A_k$ of the form:
$$
\frac{d}{dt}A_k\leq -2C_k^MA_{k+1}+ C_kA_k(A_0+A_1)+C_{k,k-1}A_{k-1})
$$
Using the already proved bound on $A_k$ and the aforementioned equation, integrating in time, is enough to show that moments of order $k
\geq2$ are controlled for $t_0>0$, however small, even if $f_0$ has not bounded moments of higher order.
\end{rmk}
We can prove also an a priori bound in $L^2_tH^1_v\cap L^\infty_tL^2_v$ of solutions of \eqref{main-eq1}, under regularity assumption.
\begin{lemma}\label{lem:H_1}
Suppose that $f_m^0\in L^1_2\cap L^2$ and $\{f_m(t)\}_{m=1}^M$ is a nonnegative solution of the equation $(\ref{main-eq1})$, such that $f_m(t)$ and its gradient $\nabla f_m(t)$ have a fast decay at infinity. There exists some constant $C>0$ depending on the initial data $\{f_m(0),\nabla f_m(0)\}_{m=1}^M$, $T<\infty$, $M<\infty$  such that 
\begin{align*}
\sup_{t\in[0,T]}\left( \sum_{m=1}^M\int  f^2_m(t,v)dv +\int_0^t\int |\nabla f_m(s,v)|^2dv\ ds\right)\le C.
\end{align*}
\end{lemma}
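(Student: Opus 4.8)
The plan is to run an $L^2$ energy estimate for the whole family $\{f_m\}_{m=1}^M$ at once: multiply the $m$-th equation of \eqref{main-eq1} by $f_m$, integrate in $v$, sum over $m$, and close a Grönwall inequality. Writing $F(t):=\sum_{m=1}^M\norm{f_m(t)}_{L^2}^2$ and $G(t):=\sum_{m=1}^M\norm{\nabla f_m(t)}_{L^2}^2$, integration by parts (legitimate by the assumed fast decay of $f_m$ and $\nabla f_m$) turns the transport term into $\int f_m\div(vf_m)\,dv=-\tfrac12\int v\cdot\nabla(f_m^2)\,dv=\tfrac d2\norm{f_m}_{L^2}^2$ and the diffusion term into $-\norm{\nabla f_m}_{L^2}^2$, so that, summing in $m$,
\[
\tfrac12 F'(t)=\tfrac d2\,F(t)-G(t)+\sum_{m=1}^M\int Q_m(f,f)\,f_m\,dv .
\]
The transport contribution is harmless (it merely feeds $F$ linearly), the Laplacian contributes the dissipation we want to keep, and everything hinges on controlling the collision term by $\delta G+C_\delta(1+F)$.

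For the collision term I would fix an exponent $q\in(2,\infty)$ with $H^1(\R^d)\hookrightarrow L^q(\R^d)$ continuously (e.g. $q=4$ when $d\le4$), with conjugate $q'$, and estimate by duality and \autoref{sobolev} (with $p=q'$, $k=0$)
\[
\Bigl|\int Q_m(f,f)\,f_m\,dv\Bigr|\le\norm{Q_m(f,f)}_{L^{q'}}\,\norm{f_m}_{L^q}\le C\,\norm{f}_{q',M,1}\,\norm{f}_{1,M,1}\,\norm{f_m}_{L^q}.
\]
By \autoref{lem:moments} (applicable since $f_m^0\in L^1_2$) the first moment $\norm{f}_{1,M,1}$ is bounded uniformly in $t$. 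The weighted factor I would interpolate between a moment and $F$: choosing the Hölder exponents so that the whole weight $\brak{v}^{q'}$ falls on an $L^1$ factor gives $\norm{f_m}_{q',1}\le C\bigl(\int f_m\brak{v}^{q'/(2-q')}dv\bigr)^{(2-q')/q'}\norm{f_m}_{L^2}^{\,2-2/q'}$, and the moment appearing has order $q'/(2-q')$ (equal to $2$ when $q=4$), controlled again by \autoref{lem:moments}; hence $\norm{f}_{q',M,1}\le C\,F^{\beta/2}$ with $\beta:=2-\tfrac2{q'}\in(0,1)$. For the $L^q$ factor, Gagliardo--Nirenberg gives $\norm{f_m}_{L^q}\le C\norm{\nabla f_m}_{L^2}^{\theta}\norm{f_m}_{L^2}^{1-\theta}$ with $\theta=d(\tfrac12-\tfrac1q)\in(0,1]$, so by Hölder in the finite index $m$, $\sum_m\norm{f_m}_{L^q}\le C\,G^{\theta/2}F^{(1-\theta)/2}$. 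Combining, $\sum_m\bigl|\int Q_m(f,f)f_m\,dv\bigr|\le C\,F^{(\beta+1-\theta)/2}G^{\theta/2}$, and since $\beta<1$, Young's inequality bounds this by $\delta G+C_\delta F^{\gamma}$ with $\gamma=\tfrac{\beta+1-\theta}{2-\theta}<1$, hence by $\delta G+C_\delta(1+F)$ for any $\delta>0$.

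Feeding this back with $\delta=\tfrac12$ gives $F'(t)+G(t)\le C(1+F(t))$ on $[0,T]$, with $C$ depending only on $d$, $M$, $T$ and the moment bounds (hence on the data); Grönwall yields $\sup_{[0,T]}F\le(F(0)+1)e^{CT}$, and integrating the differential inequality once more gives $\sup_{[0,T]}F+\int_0^TG\,dt\le C(T)$, which is the claim (here $F(0)=\sum_m\norm{f_m^0}_{L^2}^2<\infty$ since $f_m^0\in L^2$). The only genuinely delicate step is the collision term, and within it the crux is extracting a power of $F$ \emph{strictly below} $1$, so that the Grönwall inequality is linear and therefore global on $[0,T]$ rather than only local: this is exactly what the low-order weight $\brak{v}$ in \autoref{sobolev} together with the a priori moment bounds of \autoref{lem:moments} makes possible, whereas a cruder bound on $Q_m$ would produce a superlinear estimate. (For $d\ge5$ one either restricts to initial data with more moments or invokes the instantaneous moment gain noted in the remark after \autoref{lem:moments}; the scheme is otherwise unchanged.)
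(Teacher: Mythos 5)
Your energy-estimate set-up and the overall Grönwall closure are correct, but your treatment of the collision term $\sum_m\int Q_m(f,f)f_m\,dv$ takes a genuinely different route from the paper's. You bound this term by the $L^{q'}\!-\!L^q$ duality pairing, invoke Proposition~\ref{sobolev} with $p=q'$, $k=0$ to control $\|Q_m(f,f)\|_{L^{q'}}$, and then interpolate: the weighted $L^{q'}$-norm is absorbed by a H\"older interpolation between $\|f_m\|_{L^2}$ and a moment of order $q'/(2-q')$, while $\|f_m\|_{L^q}$ is absorbed by Gagliardo--Nirenberg into $\|\nabla f_m\|_{L^2}^\theta\|f_m\|_{L^2}^{1-\theta}$. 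The paper instead never leaves $L^2$: for the positive part of $Q_m$ it applies pointwise Young's inequality $ab\le\tfrac12 a^2+\tfrac12 b^2$ with $a=\int f_m(\cdot)f_n(v)|v|\,dv$ and $b=f_{m-n}(w)$, then controls the squared inner integral by Jensen's inequality with respect to the normalized measure $f_n\,dv/\!\int f_n\,dv$, and finally uses the translation (affine) invariance of Lebesgue measure so that the $w$-integral peels off $\|f_m\|_{L^2}^2$ times a second moment of $f_n$; the negative part of $Q_m$ is simply discarded by sign. This yields $F'+G\le CF$ with a \emph{linear} right-hand side directly, so no Young-with-parameter or sub-linear power is needed.

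What the two approaches buy differs. The paper's Young/Jensen argument is elementary, dimension-free, and uses only second moments, so the hypothesis $f_m^0\in L^1_2\cap L^2$ suffices in every $d\ge1$. Your duality route is more systematic and immediately reusable in other $L^p$ settings, and it isolates cleanly why a sub-linear power of $F$ appears (the low-order weight $\brak{v}^{k+1}$ with $k=0$ in Proposition~\ref{sobolev}); but the two constraints it needs --- $q\le 2d/(d-2)$ for Gagliardo--Nirenberg, and $q'/(2-q')\le2$ so that only second moments enter --- are compatible only for $d\le4$. For $d\ge5$ no admissible $q$ exists, so under the stated hypothesis the argument as written does not close. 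You flag this and propose to use the remark after Lemma~\ref{lem:moments} on instantaneous generation of higher moments, but that remark is heuristic (it presupposes a lower bound $\inf_t\int f_M\,dw>0$ that is not established), so it does not rigorously repair the $d\ge5$ case. For the paper's statement, which is made in general dimension with only $L^1_2$ data, the direct Young--Jensen route is therefore strictly more robust; your version proves the lemma only for $d\le4$ or under a strengthened moment hypothesis.
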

\begin{proof}
Note that for each $m=1,...,M$, by \eqref{main-eq}
\begin{align*}
&\frac{1}{2}\partial_t\int  f^2_m(t,v)dv=\int f_m(t,v)\partial_t f_m(t,v) dv\\
&= \int f_m \text{div}_v\left(vf_m\right)dv+\int f_m\Delta_v f_mdv\\
& +\int dv \sum_{n=1}^{m-1}\iint_{\{nw+(m-n)w'=mv\}} f_m(t,v)f_n(t,w)f_{m-n}(t,w')|w-w'|dwdw'\\
&-2\int dv \sum_{n=1}^M \int  f_m(t,v)f_m(t,v)f_n(t,w)|v-w|dw\\
&= \frac d2 \int f_m^2dv-\int |\nabla f_m|^2 dv \\
& + \sum_{n=1}^{m-1}\iint f_m\left(t,\frac{nw+(m-n)w'}{m}\right)f_n(t,w)f_{m-n}(t,w')|w-w'|dwdw'\\
&- 2\sum_{n=1}^M\iint f^2_m(t,v)f_n(t,w)|v-w|dwdv.
\end{align*}
Here, we assumed that 
$|v\nabla_vf_m|\to0$, 
as $|v|\to\infty$ for every $t$ such that the boundary terms vanish in the integration by parts. 

Further, we see that each term on the positive nonlinearity can be written using the Young's inequality has (same for $v$ or $w$ variable):
\begin{align*}
&\iint f_m\left(t,\frac{nv+(m-n)w}{m}\right)f_n(t,v)f_{m-n}(t,w)|v|dvdw=\\
&\le \int \left[\frac12 \left(\int f_m\Big(t,\frac{nv+(m-n)w}{m}\Big)f_n(t,v)|v|dv\right)^2+\frac12 f_{m-n}(t,w)^2\right]dw\\
&= \int \frac12 \left(\int f_m\left(t,\frac{nv+(m-n)w}{m}\right)f_n(t,v)|v|dv\right)^2dw+\frac12\|f_{m-n}(t,w)\|_{L^2}^2
\end{align*}
Thus we need to take care of the squared factor and we can do this using Jensen and normalizing respect the measure $(\int f_n(t,v)dv)^{-1}f_n(t,v)dv$, so that we obtain:
\begin{align*}
&\int \frac12 \left(\int f_m\left(t,\frac{nv+(m-n)w}{m}\right)f_n(t,v)|v|dv\right)^2dw\\
&\leq  \frac12 C_0\int\int f_m^2\left(t,\frac{nv+(m-n)w}{m}\right)|v|^2f_n(t,v)dvdw\\
&= (A)
\end{align*}
We have then, using translation invariance of Lebesgue measure:
\begin{align*}
&(A)=\frac12 C_0\int\left(\int f_m^2\left(t,\frac{nv+(m-n)w}{m}\right)dw\right)f_n(t,v)|v|^2dv\\
&\lesssim \|f_m\|^2_{L^2}\int |v|^2f_n(t,v)dv \leq C_1\|f_m\|^2_{L^2} 
\end{align*}
where the constant $C_1:=C_1(M)$ depend also on $M$, using the moment bound Lemma \ref{lem:moments} with $\ell=2$, since we assumed that $f_m^0\in L^1_2$.

We can repeat the same argument for the $|w|$ part. 
Thus we conclude that, erasing the negative term of the nonlinearity,
\begin{align*}
&\partial_t\sum_{m=1}^M\int  f^2_m(t,v)dv+\sum_{m=1}^M\int |\nabla f_m(t,v)|^2 dv\le  C(M,0,1,A)\sum_{m=1}^M\int  f^2_m(t,v)dv  
\end{align*}
where $C(M,0,1,A)>0$, and assuming the initial data is such that
\begin{align*}
C_1:=\sum_{m=1}^M\int |v|^2f_m(0,v)dv,\;  C_0:=\sum_{m=1}^M\int f_m(0,v)dv,\;  C_l:=\sum_{m=1}^M\int f^2_m(0,v)dv< \infty.
\end{align*}
It follows that for any $t\in [0,T]$ we pass to integral form
\begin{align*}
&\sum_{m=1}^M\int  f^2_m(t,v)dv+\kappa c(m)^2\int_0^t\ \sum_{m=1}^M\int |\nabla f_m(s,v)|^2 dv\ ds\\
&\le C_{l} + C(M,0,1,A)\int_0^t\sum_{m=1}^M\int  f^2_m(s,v)dvds\\
&\leq C_{l} + C(M,0,1,A)\int_0^t\left(\sum_{m=1}^M\int  f^2_m(s,v)dv + \int_0^s\ \sum_{m=1}^M\int |\nabla f_m(r,v)|^2 dv\ dr \right)ds
\end{align*}
From Gronwall's lemma, it follows:
\begin{align*}
&\sup_{t\in [0,T]}\left(\sum_{m=1}^M\int  f^2_m(t,v)dv+\int_0^t\ \sum_{m=1}^M\int |\nabla f_m(s,v)|^2 dv\ ds\right)\leq C_{l}\ e^{TC_{M,0,A,1}}\leq C(T)<\infty.
\end{align*}
\end{proof}
\begin{rmk}
Consider the constant in the previous lemma on the right hand side of the Gronwall inequality and consider $C_T:=\sup_{t\leq T}C_t<\infty$. Then from the lemma we can recover the following:
$$
\partial_t \left(e^{-C_Tt}\|f_t\|_{2,M}^2\right)+\int_0^te^{-C_Tt}\|f_s\|_{H^1,M}^2ds\leq 0
$$
using Gronwall' lemma and using the property of sup and exponential, we have that exists $C:=C_Te^{C_TT}<\infty$ depending only on the initial condition $f_0$ and not his gradient, such that:
$$
\sup_{t\leq T}\|f_t\|_{2,M}^2+\int_0^t\|f_s\|_{H^1,M}^2ds\leq C.
$$
\qed
\end{rmk}
Using the already proven a priori estimate and the bound on the nonlinearity we can also show an a priori bound on the solution in the weighted space $L^\infty_t(L^2_k)_v\cap L^2_t(H^1_{k})_v$, 
noting also that $|v|^k\le \langle v\rangle^k\le 2^{k-1}(1+|v|^k)$.

\begin{lemma}\label{lem:H_1-weighted}
For every $k\in\N$, suppose that $f^0_m\in L^1_{4k+2}\cap L^2_{k}$ and $\{f_m(t)\}_{m=1}^M$ is a nonnegative solution of the equation $(\ref{main-eq1})$, such that $f_m(t)$ and its gradient $\nabla f_m(t)$ have a fast decay at infinity. Then there exists some constant $C>0$ depending on the initial data $\{f_m(0),\nabla f_m(0)\}_{m=1}^M$, $T<\infty$, $M<\infty$ such that 
\begin{align*}
\sup_{t\in[0,T]}\left( \sum_{m=1}^M\int  f^2_m(t,v)|v|^{2k}dv + \int_0^t\int |\nabla f_m(s,v)|^2|v|^{2k}dv\ ds\right)\le C.
\end{align*}
\end{lemma}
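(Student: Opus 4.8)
The plan is to run an induction on $k$, the base case $k=0$ being exactly Lemma \ref{lem:H_1} (note that $L^1_{4\cdot 0+2}\cap L^2_0=L^1_2\cap L^2$), and at step $k\ge 1$ to assume the statement at level $k-1$; this is legitimate since $L^1_{4k+2}\cap L^2_k\subset L^1_{4k-2}\cap L^2_{k-1}$, so the inductive hypotheses are inherited. Because $|v|^{2k}\le\brak{v}^{2k}$ it suffices to control $\sum_m\int f_m^2(t,v)\brak{v}^{2k}\,dv$ together with $\int_0^t\sum_m\int|\nabla f_m|^2\brak{v}^{2k}\,dv\,ds$. I would test \eqref{main-eq1} against $f_m\brak{v}^{2k}$, sum over $m$, and handle the linear and the nonlinear parts separately, exactly along the lines of the proof of Lemma \ref{lem:H_1}.

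For the linear terms, integrating by parts twice gives
\[
\int f_m\brak{v}^{2k}\Delta f_m\,dv=-\int\brak{v}^{2k}|\nabla f_m|^2\,dv+\tfrac12\int\Delta(\brak{v}^{2k})f_m^2\,dv ,
\]
and since $\Delta(\brak{v}^{2k})\le 2k(d+2k-2)\brak{v}^{2k-2}$, the extra term is $\le C_{k,d}\int f_m^2\brak{v}^{2k-2}\,dv$, bounded uniformly in $t$ by the induction hypothesis at level $k-1$. Similarly one computes
\[
\int f_m\brak{v}^{2k}\,\text{div}(vf_m)\,dv=\tfrac d2\int\brak{v}^{2k}f_m^2\,dv-k\int|v|^2\brak{v}^{2k-2}f_m^2\,dv\le\tfrac d2\int\brak{v}^{2k}f_m^2\,dv ,
\]
a term of the right form for Gronwall. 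The negative part of $Q_m$, multiplied by $f_m\brak{v}^{2k}\ge 0$, is $\le 0$ and can simply be discarded.

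The heart of the matter is the gain term. After the change of variables of this section, its contribution to $\tfrac12\partial_t\int f_m^2\brak{v}^{2k}\,dv$ is
\[
\sum_{n=1}^{m-1}\iint f_m(\bar w)\,\brak{\bar w}^{2k}\,f_n(w)f_{m-n}(w')\,|w-w'|\,dw\,dw',\qquad \bar w:=\tfrac{nw+(m-n)w'}{m}.
\]
I would bound $\brak{\bar w}^{2k}\lesssim\brak{w}^{2k}+\brak{w'}^{2k}$ and $|w-w'|\le\brak{w}+\brak{w'}$, reducing to finitely many terms of the form $\iint f_m(\bar w)\brak{w}^p f_n(w)\brak{w'}^q f_{m-n}(w')\,dw\,dw'$ with $p+q\le 2k+1$ and $\min(p,q)\le 1$. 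Assuming $q\le 1$ (the case $p\le 1$ is symmetric, with $f_n$ and $f_{m-n}$ interchanged), Young's inequality applied in the $w'$-integral bounds this by
\[
\tfrac12\int\Big(\int f_m(\bar w)\brak{w}^p f_n(w)\,dw\Big)^2dw'+\tfrac12\int\brak{w'}^{2q}f_{m-n}(w')^2\,dw' ;
\]
the second term is $\le\tfrac12\int\brak{w'}^{2k}f_{m-n}^2\,dw'$ since $2q\le 2\le 2k$, hence of Gronwall type, while for the first I would apply Jensen's inequality against the probability measure $\big(\int f_n\big)^{-1}f_n\,dw$ and then the change of variable $w'\mapsto\bar w$ (translation invariance of Lebesgue measure) to obtain a bound $\le C_{m,n}\big(\int f_n\big)\|f_m\|_{L^2}^2\int\brak{w}^{2p}f_n(w)\,dw$, which is a finite constant: $\int f_n\le C_0$ and $\|f_m\|_{L^2}^2\le C$ by Lemma \ref{lem:H_1}, and $\int\brak{w}^{2p}f_n\,dw<\infty$ by the moment bound Lemma \ref{lem:moments} since $2p\le 4k+2$ and $f_n^0\in L^1_{4k+2}$.

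Summing over $m$ and $n$ and assembling the linear and gain contributions, using the induction hypothesis to absorb the $\brak{v}^{2k-2}$-weighted quantity into a constant, I expect to reach
\[
\partial_t\sum_{m=1}^M\int f_m^2\brak{v}^{2k}\,dv+\sum_{m=1}^M\int|\nabla f_m|^2\brak{v}^{2k}\,dv\le C_1\sum_{m=1}^M\int f_m^2\brak{v}^{2k}\,dv+C_2 ,
\]
with $C_1,C_2$ depending on $T,M,k,d$ and the previously obtained constants; since $\sum_m\int f_m^0(v)^2\brak{v}^{2k}\,dv<\infty$ by $f_m^0\in L^2_k$, Gronwall's lemma then gives the $\sup_t$ bound and, reinserting it into the inequality, the bound on the weighted Dirichlet energy. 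The main obstacle — and the only genuinely new point relative to Lemma \ref{lem:H_1} — is the bookkeeping in the gain term: one must distribute the weight $\brak{\bar w}^{2k}|w-w'|\lesssim(\brak{w}^{2k}+\brak{w'}^{2k})(\brak{w}+\brak{w'})$ between the two incoming densities so that the factor squared by Young's inequality always carries weight at most $1$ (hence at most $2k$ after squaring, i.e. absorbable into the quantity being estimated, which is exactly where $k\ge 1$ is used), while the remaining factor is only ever asked for an $L^1$ moment of order $\le 4k+2$ — precisely the integrability imposed on $f_m^0$.
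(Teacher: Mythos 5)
Your proposal is correct and follows essentially the same route as the paper's proof: induction on $k$ with Lemma~\ref{lem:H_1} as base case, integration by parts for the linear terms, bounding the gain-term weight by $\brak{\bar w}^{2k}|w-w'|\lesssim(\brak{w}^{2k}+\brak{w'}^{2k})(\brak{w}+\brak{w'})$, then Young's inequality combined with Jensen against the normalized measure and translation invariance, invoking $f_m^0\in L^1_{4k+2}$ via Lemma~\ref{lem:moments} to close with Gronwall. Your systematic $(p,q)$ bookkeeping is a slightly cleaner packaging of the same four terms the paper treats in two representative cases, and your observation that the $\brak{w'}^{2q}f_{m-n}^2$ piece with $2q\le 2\le 2k$ can be absorbed directly into the Gronwall term (rather than the induction hypothesis) is in fact a small improvement that handles $k=1$ more transparently.
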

\begin{proof}
We perform induction on $k\in\N$. The case $k=0$ is proved in Lemma \ref{lem:H_1}. 
Suppose that the thesis has been proved for the case $k-1$. Then, for each $m=1,...,M$, by \eqref{main-eq}
\begin{align*}
&\frac{1}{2}\partial_t\int  |v|^{2k}f^2_m(t,v)dv=\int |v|^{2k}f_m(t,v)\partial_t f_m(t,v) dv\\
&= \int |v|^{2k}f_m \text{div}_v\left(vf_m\right)dv+\int|v|^{2k} f_m\Delta_v f_mdv\\
& +\int dv \sum_{n=1}^{m-1}\iint_{\{nw+(m-n)w'=mv\}}|v|^{2k}f_m(t,v)f_n(t,w)f_{m-n}(t,w')|w-w'|dwdw'\\
&-2\int dv \sum_{n=1}^M \int  |v|^{2k}f_m(t,v)f_m(t,v)f_n(t,w)|v-w|dw\\
&\le (\frac{d}{2}-k)\int |v|^{2k}f_m^2dv-\frac{1}{2}\int |v|^{2k}|\nabla f_m|^2 dv +2 k^2\int |v|^{2k-2}f_m^2 dv\\
& + \sum_{n=1}^{m-1}\iint \left|\frac{nw+(m-n)w'}{m}\right|^{2k}f_m\left(t,\frac{nw+(m-n)w'}{m}\right)f_n(t,w)f_{m-n}(t,w')|w-w'|dwdw'\\
&- 2\sum_{n=1}^M\iint |v|^{2k}f^2_m(t,v)f_n(t,w)|v-w|dwdv,
\end{align*}
where we used Young's inequality
\begin{align*}
    -2k\int |v|^{2k-2} v\cdot\nabla f_m f_m dv &\le 2k\int |v|^{2k-1}|\nabla f_m|f_mdv\\
    &\le \frac{1}{2}\int |v|^{2k}|\nabla f_m|^2dv+2k^2\int |v|^{2k-2}f_m^2dv
\end{align*}
where we assumed that $|v|^{2k}|f_m\nabla f_m|\to0$, $|v|^{2k+1}|f_m|^2\to0$, 
as $|v|\to\infty$ for every $t$ such that the boundary terms vanish in the integration by parts. 

Further, since 
\begin{align*}
    \left|\frac{nw+(m-n)w'}{m}\right|^{2k}(|w|+|w'|)\le |w|^{2k+1}+|w'|^{2k+1}+|w|^{2k}|w'|+|w'|^{2k}|w|,
\end{align*}
we see that each term on the positive nonlinearity can be written using the Young's inequality has (same for $v$ or $w$ variable):
\begin{align*}
&\iint f_m\left(t,\frac{nv+(m-n)w}{m}\right)f_n(t,v)f_{m-n}(t,w)|v|^{2k+1}dvdw\\
&\le \int \left[\frac12 \Big(\int f_m\left(t,\frac{nv+(m-n)w}{m}\right)f_n(t,v)|v|^{2k+1}dv\right)^2+\frac12\left(f_{m-n}(t,w)\Big)^2\right]dw\\
&= \int \frac12 \left(\int f_m\left(t,\frac{nv+(m-n)w}{m}\right)f_n(t,v)|v|^{2k+1}dv\right)^2dw+\frac12\|f_{m-n}(t,w)\|_{L^2}^2
\end{align*}
Thus we need to take care of the squared factor and we can do this using Jensen and normalizing respect the measure $(\int f_n(t,v)dv)^{-1}f_n(t,v)dv$, so that we obtain:
\begin{align*}
&\frac12 \left(\int f_m\left(t,\frac{nv+(m-n)w}{m}\right)f_n(t,v)|v|^{2k+1}dv\right)^2dw\\
&\leq  \frac12 C_0\int\int f_m^2\left(t,\frac{nv+(m-n)w}{m}\right)|v|^{4k+2}f_n(t,v)dvdw\\
&= (A)
\end{align*}
We have then, using translation invariance of Lebesgue measure:
\begin{align*}
&(A)=\frac{1}{2}C_0\int\left(\int f_m^2\left(t,\frac{nv+(m-n)w}{m}\right)dw\right)f_n(t,v)|v|^{4k+2}dv\\
&\le C_0\|f_m\|^2_{L^2}\int |v|^{4k+2}f_n(t,v)dv \leq C_1\|f_m\|^2_{L^2} 
\end{align*}
where the constant $C_1:=C_1(M)$ depend also on $M$ and comes from the momentum bound of $f_m$, Lemma \ref{lem:moments} since we assumed that $f_m^0\in L^1_{4k+2}$.

Similarly, we have another term
\begin{align*}
    &\iint f_m\left(t,\frac{nv+(m-n)w}{m}\right)f_n(t,v)f_{m-n}(t,w)|v|^{2k}|w|dw\\
    &\le\int \frac12 \left(\int f_m\left(t,\frac{nv+(m-n)w}{m}\right)f_n(t,v)|v|^{2k}dv\right)^2dw +\int \frac12 f^2_{m-n}(t,w)|w|^2dw\\
    &\le C_0\int \frac12 \int f^2_m\left(t,\frac{nv+(m-n)w}{m}\right)f_n(t,v)|v|^{4k}dvdw +\int \frac12 f^2_{m-n}(t,w)|w|^2dw\\
    &\le C_0||f_m||_{L^2}^2\int f_n(t,v)|v|^{4k}dv+ \int \frac12 f^2_{m-n}(t,w)|w|^2dw\\
    &\le C_1||f_m||_{L^2}^2+\int \frac12 f^2_{m-n}(t,w)|w|^2dw.
\end{align*}
Thus we conclude that, erasing the negative term of the nonlinearity and summing over $n=1,...,m-1$, using the induction hypothesis,
\begin{align*}
&\partial_t\sum_{m=1}^M\int  |v|^{2k}f^2_m(t,v)dv+\frac{1}{2}\sum_{m=1}^M\int |v|^{2k}|\nabla f_m(t,v)|^2 dv\\
&\le  C(M,0,1,A)\sum_{m=1}^M\int  f^2_m(t,v)|v|^{2k}dv+ C_B,
\end{align*}
where $C(M,0,1,A),C_B>0$, and assuming the initial data is such that
\begin{align*}
C_1:=\sum_{m=1}^M\int |v|^{4k+2}f_m(0,v)dv, \; C_0:=\sum_{m=1}^M\int f_m(0,v)dv,\;  C_l:=\sum_{m=1}^M\int |v|^{2k} f^2_m(0,v) dv< \infty
\end{align*}
It follows that for any $t\in [0,T]$ we pass to integral form
\begin{align*}
&\sum_{m=1}^M\int  |v|^{2k}f^2_m(t,v)dv+\frac{1}{2}\int_0^t\ \sum_{m=1}^M\int |v|^{2k}|\nabla f_m(s,v)|^2 dv\ ds\\
&\le C_{l,B} + C(M,0,1,A)\int_0^t\sum_{m=1}^M\int  |v|^{2k}f^2_m(s,v)dvds\\
&\leq C_{l,B} + C(M,0,1,A)\int_0^t\left(\sum_{m=1}^M\int  |v|^{2k}f^2_m(s,v)dv + \int_0^s\ \sum_{m=1}^M\int |v|^{2k}|\nabla f_m(r,v)|^2 dv\ dr \right)ds
\end{align*}
From Gronwall's lemma, it follows:
\begin{align*}
\sup_{t\in [0,T]}&\left(\sum_{m=1}^M\int  |v|^{2k}f^2_m(t,v)dv+ \int_0^t\ \sum_{m=1}^M\int |v|^{2k}|\nabla f_m(s,v)|^2 dv\ ds\right)\\
&\leq C_{l,B}\ e^{TC_{M,0,A,1}}\leq C(T)<\infty.
\end{align*}
\end{proof}
We want to extend the inequality to higher derivatives and higher moments, to this end we consider the following proposition regarding the nonlinearity $Q$:

\begin{proposition}\label{ppn:lebeniz}
Let $f,g$ smooth and rapidly decaying function in $v$ at infinity, then
\begin{align*}
    \nabla Q_m(f,g)=Q_m(c_m^n\nabla f,g)+Q_m(f,c_m^n\nabla g),
\end{align*}
for each $m=1,..,M$.
\end{proposition}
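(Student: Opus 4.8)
The identity is a Leibniz-type product rule for the bilinear operator $Q_m$, so the natural strategy is: differentiate $Q_m(f,g)(v)$ coordinatewise under the integral sign, and then integrate by parts in the inner velocity variable to reabsorb the term coming from the kernel $|v-w|$. I would work with the change-of-variables form
$$Q_m(f,g)(v)=\sum_{n=1}^{m-1}c_m^n\int f_n(\varphi_n(v,w))\,g_{m-n}(w)\,|v-w|\,dw\;-\;2\sum_{n=1}^{M}\int f_m(v)\,g_n(w)\,|v-w|\,dw ,$$
with $\varphi_n(v,w)=\tfrac{mv-(m-n)w}{n}$, so that $\nabla_v\varphi_n=\tfrac mn I_d$ and $\nabla_w\varphi_n=-\tfrac{m-n}{n}I_d$; the constants $c_m^n$ simply carry the $(m/n)$-type factors through.

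The first step is to legitimise differentiation under the integral. The only non-smooth ingredient is $u\mapsto|u|$, which is Lipschitz with $|\nabla|u||\le1$ and smooth off the origin; since $f_n,g_{m-n}$ are continuous and the exceptional sets $\{w=v\}$ (and its $\varphi_n$-image) are Lebesgue-null, the integrands are differentiable in $v$ for a.e.\ $w$. Using that $f,g,\nabla f,\nabla g$ decay faster than any polynomial, that $|v-w|\le\brak{v}+\brak{w}$, that $|v_i-w_i|/|v-w|\le1$, and that $|\varphi_n(v,w)|\ge\tfrac{m-n}{n}|w|-\tfrac mn|v|\to\infty$ as $|w|\to\infty$ (here $1\le n\le m-1$), one obtains a locally-in-$v$ uniform dominating function of the type $(\text{rapidly decaying in }w)\cdot(1+|w|)$, which is integrable. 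Hence $\nabla_v Q_m(f,g)$ exists and equals the integral of the $v$-derivative of the integrand. (Equivalently, replace $|u|$ by $\sqrt{|u|^2+\epsilon^2}$, differentiate freely, and let $\epsilon\to0$ by dominated convergence with the same bounds; this also sidesteps the non-smoothness in the integration by parts below.) The chain rule on $f_n(\varphi_n(v,w))$ yields a factor $\tfrac mn$, and $\nabla_v|v-w|=\tfrac{v-w}{|v-w|}$ produces an additional kernel term, both in the gain and in the loss part.

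The second step is to match this with $Q_m(\nabla f,g)+Q_m(f,\nabla g)$. The term $Q_m(f,\nabla g)$ is treated by integrating by parts in $w$: since $\nabla_w\big[f_n(\varphi_n(v,w))\big]=-\tfrac{m-n}{n}(\nabla f_n)(\varphi_n(v,w))$ and $\nabla_w|v-w|=\tfrac{w-v}{|v-w|}$ --- valid because $|v-w|\in W^{1,\infty}_{\mathrm{loc}}$ and $g_{m-n}$ is Schwartz, so there is no boundary term and the null singular set is harmless --- the gain part of $Q_m(f,\nabla g)$ becomes $\tfrac{m-n}{n}$ times the ``derivative-on-$f$'' term plus exactly the $\tfrac{v-w}{|v-w|}$-kernel term; for the loss part, the analogous integration by parts (with $f_m(v)$ constant in $w$) turns $\int f_m(v)\nabla g_n(w)|v-w|dw$ into $\int f_m(v)g_n(w)\tfrac{v-w}{|v-w|}dw$. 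Adding $Q_m(\nabla f,g)$, which contributes the plain ``derivative-on-$f$'' terms, and using the elementary identity $1+\tfrac{m-n}{n}=\tfrac mn$, the coefficients of the ``derivative-on-$f$'' terms reconcile, while the kernel terms and the plain ``derivative-on-$g$'' terms match term by term; unwinding the $c_m^n$-bookkeeping gives precisely the right-hand side of the proposition, for every $m$ and every coordinate direction.

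The only genuine (if mild) obstacle is keeping the analysis honest around the non-smoothness of $|v-w|$: one must ensure that both the differentiation under the integral and the integration by parts in $w$ are justified even though $u\mapsto|u|$ fails to be $C^1$ at the origin, and that the singular kernel $\tfrac{v-w}{|v-w|}$ is merely bounded, so that all the rearranged integrals converge absolutely thanks to the rapid decay of $f$ and $g$. The $\epsilon$-regularisation $\sqrt{|v-w|^2+\epsilon^2}$ renders all of this routine; the remainder is the combinatorial identity $1+\tfrac{m-n}{n}=\tfrac mn$ together with term-by-term matching.
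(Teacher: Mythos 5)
Your proof is correct but takes a genuinely different route from the paper's. The paper first performs the translation $w\mapsto w-v$, rewriting each gain (resp.\ loss) integral with arguments $\tilde\varphi(v,w)=v-\tfrac{m-n}{n}w$ and $\theta(v,w)=v+w$ against the $v$-independent kernel $|w|$; the $v$-derivative then lands only on the smooth composites $f_n(\tilde\varphi)$ and $g_{m-n}(\theta)$, and the Leibniz identity drops out immediately by differentiation under the integral sign, with no singular kernel, no integration by parts, and no combinatorial matching. You instead differentiate directly in $v$, pick up the singular factor $\tfrac{v-w}{|v-w|}$ from $\nabla_v|v-w|$, and integrate by parts in $w$ (using $\nabla_w\varphi_n=-\tfrac{m-n}{n}I$) to reabsorb it, closing via $1+\tfrac{m-n}{n}=\tfrac mn$. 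This is valid, and the $\epsilon$-regularization $\sqrt{|v-w|^2+\epsilon^2}$ is the right tool to make the singular manipulations honest, but your route forces you to confront the non-smoothness of $|\cdot|$ twice (once when differentiating in $v$, once when integrating by parts in $w$) and to control the bounded-but-discontinuous kernel $\tfrac{v-w}{|v-w|}$. The paper's pre-translation eliminates all of these issues in a single stroke; the only price is that both composed arguments now depend on $v$, which is harmless because $f$ and $g$ are smooth. It would be worth noting your argument as an alternative check, but for clarity the translation trick is preferable and you may want to adopt it.
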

\begin{proof}
First of all we split the nonlinearity into the positive and negative part and we change variables to get
\begin{align*}
    Q^+_m(f,g)&:=\sum_{n=1}^{m-1}\int \left(\frac{m}{n}\right)^2f_{n}(\varphi(v,w))g_{m-n}(w)|v-w| dw\\
    &=\sum_{n=1}^{m-1}\int  \left(\frac{m}{n}\right)^2f_{n}(\tilde{\varphi}(v,w))g_{m-n}(\theta(v,w))|w| dw\\
    Q^-_m(f,g)&:=\sum_{n=1}^M  f_m(v)\int g_n(w)|v-w|dw=\sum_{n=1}^M f_m(v)\int g_n(\theta(v,w))|w|dw,
\end{align*}
where we have set
$$
\varphi(v,w):=\frac{mv-(m-n)w}{n},\quad \theta(v,w):=v+w,\quad \tilde{\varphi}(v,w):=v-\frac{m-n}{n}w.
$$
Whit this we can use the differentiation under integral sign and since the dependence on $v$ is only on $f$ and $g$ we have
\begin{align*}
    &\nabla_v \left(\int f_{n}(\tilde{\varphi}(v,w))g_{m-n}(\theta(v,w))|w| dw\right)\\
    &=\int \left(\nabla_v f_{n}(\tilde{\varphi}(v,w))\right)g_{m-n}(\theta(v,w))|w| dw+\int f_{n}(\tilde{\varphi}(v,w))\nabla_v g_{m-n}(\theta(v,w))|w| dw.\\
    &\nabla_v  \left(f_m(v)\int g_n(w)|v-w|dw\right) = \nabla_v \left(f_m(v)\int g_n(\theta(v,w))|w|dw\right)=\\
    &=\nabla_v f_m(v)\int g_n(\theta(v,w))|w|dw+  f_m(v)\int \nabla_v g_n(\theta(v,w))|w|dw.
\end{align*}
Taking out the constant in the differentiation, depending only on $m$ and $n$, and summing all together we conclude the proof.
\end{proof}
\begin{rmk}
As a corollary, iterating Proposition \ref{ppn:lebeniz}, higher-order derivatives of $Q$ can be
calculated using the following formula:
\begin{align*}
    \partial^jQ(f,g)=\sum_{0\leq l\leq j} \binom{j}{l}Q_m\left(c_{m,n}\partial^{j-l} f,c_{m,n}\partial^l g\right)
\end{align*}
where $j$ is a multi-index $|j|=n$ such that $j=j_1...j_d$ (with indices possibly being zeroes) and
$$
\partial^j:=\partial_{v_1}^{j_1}...\partial^{j_d}_{v_d}.
$$
The factor $\binom{j}{l}$ is a multinomial coefficients and the sum is intended in the sense of ordering of multi-indices.
\end{rmk}
With this in mind we are able to state the following a priori estimate on the space $H^n_k$ for all $k,n\in \N$, defined in \eqref{weighted-sobolev} 
equipped with the norm $\|f\|_{H_k^n}:=\left(\sum_{0\le|j|\le n}\|\partial^j f\|_{2,k}^2\right)^{1/2}$.
\begin{lemma}\label{lem:H^n_k}
Suppose that $\{f_m\}_{m=1}^M$ is a nonnegative solution of the equation $(\ref{main-eq-trunc1})$, such that $f_m$ is regular enough. For all $n\in\N,\ k\in\N$, there exists some constant $C>0$ depending on the initial data $\{f_m(0)\}_{m=1}^M$, $T<\infty$, $M<\infty$, $n,k$ such that
\begin{align*}
\sup_{t\in[0,T]}\left( \sum_{m=1}^M \|f_m\|^2_{H^{n}_{k}}\right)\le C.
\end{align*}
\end{lemma}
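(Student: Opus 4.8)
The plan is a double induction: an outer induction on the derivative order $n$, and for each fixed $n$ an inner induction on the weight $k$. The quantity to estimate is $\Phi_{n,k}(t):=\sum_{m=1}^M\sum_{|j|=n}\int|v|^{2k}|\partial^j f_m(t,v)|^2\,dv$, through a weighted energy identity closed by Gronwall, in exactly the spirit of the proofs of Lemmas \ref{lem:H_1} and \ref{lem:H_1-weighted}. The case $n=0$ (all $k$) is Lemma \ref{lem:H_1-weighted}. So I would fix $n\ge1$, assume the bound for all orders $\le n-1$ and all weights, and run the inner induction on $k\ge0$, the base $k=0$ being the same computation with the weight-commutator terms absent.

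For the inductive step, differentiate \eqref{main-eq-trunc1} by a multi-index $\partial^j$ with $|j|=n$, test against $|v|^{2k}\partial^j f_m$, integrate in $v$, and sum over $m$. The linear terms, via $\partial^j\mathrm{div}(vf_m)=\mathrm{div}(v\partial^j f_m)+n\,\partial^j f_m$ and $\partial^j\Delta f_m=\Delta\partial^j f_m$, produce after integration by parts (legitimate by the assumed decay) a contribution of the type $C\int|v|^{2k}|\partial^j f_m|^2-\tfrac12\int|v|^{2k}|\nabla\partial^j f_m|^2+C\int|v|^{2k-2}|\partial^j f_m|^2$, where the weight-commutators are absorbed into half the dissipation by Young, and the residual $\int|v|^{2k-2}|\partial^j f_m|^2\le\|f_m\|^2_{H^n_{k-1}}$ is controlled by the $(n,k-1)$ hypothesis. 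For the nonlinearity I would expand $\partial^j Q_m(f,f)=\sum_{0\le\ell\le j}\binom j\ell Q_m(c_{m,n}\partial^{j-\ell}f,c_{m,n}\partial^\ell f)$ using the Remark following Proposition \ref{ppn:lebeniz}; the mixed terms with $1\le|\ell|\le n-1$ carry at most $n-1$ derivatives on each factor, so Proposition \ref{sobolev}, Cauchy--Schwarz ($\|g\|_{1,k+1}\lesssim\|g\|_{2,k+1+s}$ for $s>d/2$) and the order-$(\le n-1)$ hypothesis bound them by a constant.

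It remains to handle the two top-order Leibniz terms, which is the crux. For $Q_m(\partial^j f,f)$, where the derivative sits on the local factor, the loss part contributes $-2\sum_{m,n}\int|v|^{2k}|\partial^j f_m(v)|^2\big(\int f_n(w)\,\beta(v,w)\,dw\big)dv\le0$ (nonnegativity of $f$ and of the bounded truncated kernel $\beta$ in \eqref{main-eq-trunc1}), which I discard; the gain part, after a Young inequality in the pairing $v\leftrightarrow(\varphi(v,w),w)$, the substitution $z=\varphi(v,w)$ and the convexity bound $|v|^{2k}\lesssim|z|^{2k}+|w|^{2k}$, is dominated by $C\,\Phi_{n,k}(t)$ times the zeroth and $2k$-th moments (finite by Lemma \ref{lem:moments}), plus $C\sum_m\|\partial^j f_m\|_{L^2}^2$, finite by the weight-zero case. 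For $Q_m(f,\partial^j f)$, where the derivative falls on the $w$-integrated factor, I would integrate by parts in $w$: assuming $\beta$ is smooth with bounded derivatives, the $w$-derivatives either land on $\beta$ (producing a bounded factor, so only $\|f_n\|_{L^1}$ survives) or, through the chain rule in $\varphi(v,w)$, reproduce a term of the $Q_m(\partial^j f,f)$-type already treated, leaving otherwise only $\le n-1$ derivatives bounded by the previous cases. Summing over the finitely many $j$ with $|j|=n$ gives $\partial_t\Phi_{n,k}\le C\Phi_{n,k}+C$ on $[0,T]$; since the regularity hypothesis makes $\Phi_{n,k}(0)$ finite, Gronwall yields $\sup_{[0,T]}\Phi_{n,k}\le C$, and adding the lower-order bounds closes the induction.

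The main obstacle is precisely this pair of top-order nonlinear terms: applying Proposition \ref{sobolev} to them would raise the weight from $k$ to $k+1$ while keeping the derivative order at $n$, so the induction would never close. One must instead exploit the sign of the loss term, the conservation-type cancellation over $m$, the boundedness of the truncated kernel, and the moment bounds of Lemma \ref{lem:moments} to stay at weight $k$; and the $w$-integration by parts for $Q_m(f,\partial^j f)$ is the one step where replacing the singular factor $|v-w|$ by its truncation is genuinely used.
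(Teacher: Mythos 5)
Your overall architecture (induction on the derivative order $n$, Leibniz expansion of $\partial^j Q_m$, weighted $L^2$ energy identity, Gronwall) matches the paper, and your treatment of the intermediate Leibniz terms $1\le|\ell|\le n-1$ is fine. The genuine departure is in how you close the estimate at top order, and that is where your plan has gaps that the paper avoids.

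The paper does not exploit the sign of the loss term or integrate by parts in $w$. It simply applies its weighted $L^2$ bound for $Q_m$ to every Leibniz summand, accepting that this raises the weight from $k$ to $k+\mu$, and then kills the excess weight by a weighted interpolation of Gagliardo--Nirenberg type: $\|\partial^j f_m\|^2_{2,k+\mu}\le \delta\,\|\nabla\partial^j f_m\|^2_{2,k}+C_\delta\|\partial^{j-1}f_m\|^2_{2,\text{higher weight}}$, absorbing the $\delta$-term into the dissipation $\int\brak{v}^{2k}|\nabla\partial^j f_m|^2$ and using the order-$(n-1)$ hypothesis (at a higher weight) for the rest. This is precisely the mechanism that makes the induction on $n$ close without ever needing to beat back the $|v-w|$ weight loss pointwise, and it gives constants that do not depend on the truncation parameter $R$.

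Your plan to avoid the weight increase has two concrete problems. First, the $w$-integration by parts in $Q_m(f,\partial^j f)$ is predicated on $\beta$ being smooth with bounded derivatives; but the kernel used here is $|v-w|\chi_R(v)\chi_R(w)$, which is only Lipschitz: $\partial_w|v-w|$ is bounded, but $\partial^{\ell}_w|v-w|$ for $|\ell|\ge 2$ is singular at $v=w$ (and the indicator $\chi_R$ is not even differentiable). So for $n\ge 2$ the terms where two or more derivatives land on $\beta$ are not ``bounded factors'' and the step does not go through as written. Second, your bound on the gain of $Q_m(\partial^j f,f)$ hinges on $|v-w|\chi_R\le 2R$, which produces a Gronwall constant proportional to $R$. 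But the paper explicitly needs these $H^n_k$ bounds to be \emph{uniform in $R$} (see the parallel lemma in Section~3, and also note that Lemma~\ref{lem:H^n_k} really concerns the untruncated system of Section~2, where the kernel is unbounded). If instead you replace $|v-w|\le 2R$ by $|v-w|\le\brak{v}\brak{w}$, you reintroduce the weight increase you were trying to avoid, and you are back to needing something like the paper's interpolation anyway. The sign observation for the loss of $Q_m(\partial^j f,f)$ is correct and harmless, but it does not by itself save the other two issues.
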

\begin{proof}
We work by induction. We know form previous lemma that $n=0,1$ we have already proven the a priori bound. Now we suppose that for every $p<n$ it's true that $f\in H^p_k$ for all $k$, and we prove that $f\in H^n_k$ for all $k$. To this end, we consider a multi index $j, |j|=n$ and the quantity $\partial^j f_m$. Using the equation for $f_m$ we get
\begin{align*}
    \partial_t \left(\partial^j f_m\right)=\Delta\left(\partial^j f_m\right)+ \div\left(v\partial^j f_m\right)+C_j\left(\partial^j f_m\right)+\sum_{0\leq l\leq j} \binom{j}{l}Q_m\left(\partial^{j-l} f,\partial^l f\right)
\end{align*}
where $C_j$ is either $0$ or $1$ depending if the multi index has the $i$-th component of the derivative of $\sum_{i=1}^d \partial^j\left(\partial_i f_m v_i\right)$. We now consider the quantity
$$
\sum_{m=1}^M\int |\partial^j f_m|^2\brak{v}^{2k} dv
$$
and derivate in time we have for each $m$:
\begin{align*}
\frac{1}{2}\partial_t\int  &\brak{v}^{2k}|\partial^j f_m|^2(t,v)dv=\int \brak{v}^{2k}\partial^jf_m(t,v)\partial_t \partial^jf_m(t,v) dv\\
&= \int \brak{v}^{2k}\partial^jf_m \text{div}\left(v\partial^jf_m\right)dv+\int\brak{v}^{2k} \partial^jf_m\Delta (\partial^jf_m) dv+\\
&+ \int C_j\brak{v}^{2k}|\partial^jf_m|^2dv
 +\sum_{0\leq l\leq j}\int Q_m(\partial^{j-l}f,\partial^lf)\partial^j f_m \brak{v}^{2k} dv\\
&\leq \frac d2 \int \brak{v}^{2k}|\partial^jf_m|^2 dv- k\int \brak{v}^{2k-2}|\partial^jf_m|^2 dv +\\
&+k \int \brak{v}^{2k}|\partial^jf_m|^2 dv- k\int \brak{v}^{k-2}|v||\partial^jf_m|^2 dv+\\
&-\int |\nabla (\partial^j f_m)|^2 \brak{v}^{2k}dv+\sum_{0\leq l\leq j}\int Q_m(\partial^{j-l}f,\partial^lf)\partial^j f_m \brak{v}^{2k} dv\\
&\leq C'_{d,k}\int \brak{v}^{2k}|\partial^jf_m|^2 dv -\int |\nabla (\partial^j f_m)|^2 \brak{v}^{2k}dv+\\
&+\underbrace{\sum_{0\leq l\leq j}\int Q_m(\partial^{j-l}f,\partial^lf)\partial^j f_m \brak{v}^{2k} dv}_{A}.
\end{align*}
We miss to understand now the nonlinearity. To this end we consider:
\begin{align*}
    \| Q_m(f,g)\|_2\leq  C\left(\sum_{n=1}^{m-1}\|f_{m-n}\|_{2,k+1}\|g_n\|_{1,k+1}+\|f_m\|_{1,k+1}\sum_{n=1}^M\|g_n\|_{2,k+1}\right).
\end{align*}
In particular we have:
\begin{align*}
    &\sup_{\|\phi\|_2=1}\iint C_{m,n}f_n(h(v,w))f_{m-n}(w)|v-w|\phi(v)dvdw\\
    &\leq \sup_{\|\phi\|_2=1}\iint C'_{m,n}f_n(z))f_{m-n}(w)|z-w|\phi(h'(z,w))dzdw\\
    &\lesssim\begin{cases}
        &\sup_{\|\phi\|_2=1}\int \int f_{m-n}(w)\phi(h'(z,w))\brak{w}dwf_n(z))\brak{z}dz\leq C_{n,m}\|f_n\|_{2,1}\|f_{m-n}\|_{1,1}\\[5pt]
        &\sup_{\|\phi\|_2=1}\int \int f_n(z))\phi(h'(z,w))\brak{z}dzf_{m-n}(w)\brak{w}dw\leq C_{n,m}\|f_{m-n}\|_{2,1}\|f_{n}\|_{1,1}.
    \end{cases}
\end{align*}
With this, in particular, we can select where to put the norm and the weight depending on the index of the derivative and we get summation of quantity that depends on lower order in $H^p_k$ with $p<n$ and a quantity depending on $\partial^j$.
\begin{align*}
  \sum_{0\leq l\leq j}\int &Q_m(\partial^{j-l}f,\partial^lf)\partial^j f_m \brak{v}^{2k} dv\leq \int Q_m(\partial^{j}f,f)\partial^j f_m \brak{v}^{2k} dv \\
  &+\int Q_m(f,\partial^j f)\partial^j f_m \brak{v}^{2k} dv  + \sum_{1\leq l\leq j-1}\int Q_m(\partial^{j-l}f,\partial^lf)\partial^j f_m \brak{v}^{2k} dv\\
  &\leq C_{m,n}(k,M)\sum_{m=1}^M\|\partial^j f_m\|^2_{2,k+\mu}.
\end{align*}
Now we note that:
\begin{align*}
    \|\partial^j f_m\|^2_{2,k+\mu}&\leq \delta \|\nabla\partial^j f_m\|^2_{2}+ c_\delta\|\partial^{j-1} f_m\|^2_{2,2(k+\mu)}\\
    &\leq  \delta \|\nabla\partial^j f_m\|^2_{2,2k}+ c_\delta\|\partial^{j-1} f_m\|^2_{2,2(k+\mu)}\\
    &\leq \delta \|\nabla\partial^j f_m\|^2_{2,2k} + C_{\delta, M, n-1}.
\end{align*}
We put together all the estimate and we get:
\begin{align*}
\frac{1}{2}&\partial_t\sum_{m=1}^M\int  \brak{v}^{2k}|\partial^j f_m|^2(t,v)dv-C'_{M,d,k}\sum_{m=1}^M\int \brak{v}^{2k}|\partial^jf_m|^2 dv \leq\\
&-\sum_{m=1}^M\int |\nabla (\partial^j f_m)|^2 \brak{v}^{2k}dv+\delta C(k,M)\sum_{m=1}^M\int |\nabla (\partial^j f_m)|^2 \brak{v}^{2k}dv+ C_M(n-1,k)
\end{align*}
Taking $\delta$ small enough, up to constant $C_\delta(M,k,n-1)$ we obtain
\begin{align*}
\frac{1}{2}\partial_t\sum_{m=1}^M\int  \brak{v}^{2k}|\partial^j f_m|^2(t,v)dv&-C'_{M,d,k}\sum_{m=1}^M\int \brak{v}^{2k}|\partial^jf_m|^2 dv \leq C_\delta(M,k,n-1).
\end{align*}
Thanks to Gronwall's lemma we conclude the proof.
\end{proof}
With all this a priori estimates follow also estimates on the $L^2_k$ norm of the solution. At the level of the a priori estimate, Lemmas \ref{lem:H^n_k} and \ref{lem:H_1} guarantee us that we have a bound for every $T>0$ on the derivatives in $L^2([0,T]\times\R^d)$ implying that after a arbitrary short time the derivatives $\partial^n f_m(t)\in L^2(\R^d)$ for any $n$ and thus they propagate in time. On the level of a priori estimates the solution results to be immediately infinitely smooth in $v$ and decay faster than any negative power of $|v|$ at infinity.\\
Using the a priori regularity in $H^n_k$ and the aforementioned propagation, we note that for every $t>0$ and $m=1,...,M$, $f_m(t)$ is in fact a Schwartz function  for $\forall t$ and this decay is enough to get that $\partial_t f\in L^2([0,T]\times\R^d)$. 
\section{Approximating Problems and preliminary}\label{sec:approx-plm}
We define a sequence of approximating problems, indexed by $R>0$, in the following way:
\begin{align}\label{main-eq-trunc1}
\begin{cases}\partial_t f^R_m = \text{div}\left(vf^R_m\right)+\Delta f^R_m + Q^R_m(f^R,f^R)\\[5pt]
f^R_m|_{t=0}=f^0_m
\end{cases}
, \quad m=1,...,M
\end{align}
where $(t,v)\in [0,T]\times\R^d$ and $f^0_m$, $\forall m=1,...,M$ are the same as $(\ref{main-eq1})$. The approximated nonlinear term is expressed as follows:
\begin{align*}
Q^R_m(f,f)(t,v) &:= \sum_{n=1}^{m-1}\chi_R(v)\iint_{\{nw+(m-n)w'=mv\}} f_n(t,w)f_{m-n}(t,w')|w-w'|\chi_R(w) dwdw'\\
&\quad -2\sum_{n=1}^M \chi_R(v)\int  f_m(t,v)f_n(t,w)|v-w|\chi_R(w)dw\\
&= \sum_{n=1}^{m-1}\chi_R(v)\int f_n\Big(t,\varphi(v,w)\Big)f_{m-n}(t,w)|v-w|\big(\frac mn \big)^2\chi_R(w) dw\\
&\quad -2\sum_{n=1}^M \chi_R(v)\int f_m(t,v)f_n(t,w)|v-w|\chi_R(w)dw
\end{align*}
where $\chi_R(z):=\chi_{\mathbb{B}(0,R)}(z)$ is the indicator function on the ball of radius $R>0$ centered at the origin in $\R^d$.
\subsection{A priori estimate for the approximating equations}
We start with the same a priori estimate for the moments and energy, under the same regularity assumption of $(\ref{main-eq1})$, that we'll verify once existence and uniqueness is shown.\\
We suppose that $f^0_m\geq0,\ \forall\ m=1,..,M$, the solution of $(\ref{main-eq-trunc1})$ is nonnegative and  regular enough and has a fast decay at infinity, then we have for the nonlinear term:
\begin{align*}
&\chi_R(v)\int f_n\Big(t,\varphi(v,w)\Big)f_{m-n}(t,w)|v-w|\chi_R(w) dw\\
&\leq\int f_n\Big(t,\varphi(v,w)\Big)f_{m-n}(t,w)|v-w| dw
\end{align*}
And thus we can recover the same a priori estimates, as in Lemmas \ref{lem:moments}, \ref{lem:H_1}, \ref{lem:H_1-weighted}, \ref{lem:H^n_k}, on the total mass, moments and energy in both $H^1$ and $H^n_k$ for the approximated problems with the same constant independent of $R$.
\begin{lemma}
For any $\ell\in\mathbb N\cup\{0\}$, there exists some finite constant $C_\ell$ depending only on $n,d$ and the initial data $\{f_m(0)\}_{m=1}^M$ such that
\begin{align*}
\sum_{m=1}^M\int  |v|^{\ell}f^R_m(t,v)dv\le C_\ell, \quad \forall R,
\end{align*}
as soon as $\{f_m^R\}_m\in L^2.$
\end{lemma}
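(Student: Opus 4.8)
The plan is to run the proof of Lemma~\ref{lem:moments} essentially verbatim for the truncated system \eqref{main-eq-trunc1}, checking at each step that the cutoff $\chi_R$ does not spoil anything and that every constant can be taken independent of $R$. First I would dispose of the base case $\ell=0$: exactly as in Lemma~\ref{lem:conserved}, whose proof only uses $f^R_m\ge 0$, the combinatorial reindexing $k=m-n$, and (now) $0\le\chi_R\le1$ in the gain together with the matching $\chi_R$'s in the loss, the quantity $\sum_{m=1}^M m\int f^R_m(t,v)\,dv$ is non-increasing in $t$; since $M<\infty$ it is bounded by its value at $t=0$, whence $\sum_m\int f^R_m(t,v)\,dv\le C_0$ uniformly in $R$ and $t$. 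I would then induct on $k$ to obtain the claim for $\ell=2k$ even, and finish with the odd case through $|v|^{2k-1}\le 1+|v|^{2k}$ and the bounds $C_0,C_{2k}$ just as in Lemma~\ref{lem:moments}.

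For the inductive step I would differentiate $\sum_m\int |v|^{2k}f^R_m(t,v)\,dv$ in time using \eqref{main-eq-trunc1}. The drift and Laplacian terms are untouched by the truncation and contribute $-2k\sum_m\int|v|^{2k}f^R_m\,dv+2k(2k+d-2)\sum_m\int|v|^{2k-2}f^R_m\,dv$, the last summand being $\le 2k(2k+d-2)C_{2(k-1)}$ by the induction hypothesis; the decay of $f^R_m$ and $\nabla f^R_m$ needed to discard the boundary terms in these integrations by parts is available under the standing assumption $\{f^R_m\}_m\in L^2$ together with the truncated analogues of Lemmas~\ref{lem:H_1}--\ref{lem:H^n_k}. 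For the nonlinear term, since $0\le\chi_R\le1$ the truncated gain is pointwise dominated by the untruncated gain, so Jensen's inequality $|(nw+(m-n)w')/m|^{2k}\le|w|^{2k}+|w'|^{2k}$ yields, exactly as in Lemma~\ref{lem:moments}, the bound $\sum_m\sum_{n=1}^{m-1}\iint(|w|^{2k}+|w'|^{2k})\,f^R_n(w)f^R_{m-n}(w')|w-w'|\,\big(\text{cutoff}\big)\,dwdw'$; extending $\sum_{n=1}^{m-1}\mapsto\sum_{n=1}^{M}$ (equivalently $k=m-n$ ranging up to $M$), adding nonnegative terms, and relabeling identifies this with $2\sum_{m,n}\iint|v|^{2k}f^R_m(v)f^R_n(w)|v-w|\,(\text{cutoff})\,dwdv$, i.e.\ with the truncated loss term. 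Hence the net nonlinear contribution to $\partial_t\sum_m\int|v|^{2k}f^R_m\,dv$ is $\le 0$, and I obtain the linear differential inequality $\partial_t\sum_m\int|v|^{2k}f^R_m\,dv\le -2k\sum_m\int|v|^{2k}f^R_m\,dv+2k(2k+d-2)C_{2(k-1)}$ with constants free of $R$; integrating gives $\sum_m\int|v|^{2k}f^R_m(t,v)\,dv\le A_0(k)+(2k+d-2)C_{2(k-1)}=:C_{2k}$ for all $t\ge0$, $R>0$, where $A_0(k)=\sum_m\int|v|^{2k}f^0_m\,dv<\infty$ by hypothesis on the data.

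The only genuinely non-routine point, and the one I would watch most carefully, is the compatibility of the $\chi_R$ cutoffs in the two halves of $Q^R_m$: every a priori estimate in Section~\ref{sec:apriori-est} rests on the cancellation between the gain and the loss after summation in $m$ and reindexing, and this must survive truncation. It does precisely because the truncation restricts collisions to pairs of velocities in $B(0,R)$ symmetrically in the gain and in the loss; moreover the product velocity $(nw+(m-n)w')/m$ is a convex combination of the colliding velocities $w,w'$, so it automatically lies in $B(0,R)$ once $w,w'$ do, which is why the factor $\chi_R(v)$ appearing in the gain term of \eqref{main-eq-trunc1} is redundant and the symmetrization carries over word for word. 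With that observation in place, the whole argument of Lemma~\ref{lem:moments} reproduces itself with constants depending only on $\ell,d$ and $\{f^0_m\}_{m=1}^M$, not on $R$.
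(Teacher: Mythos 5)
Your plan --- rerun the moment induction of Lemma~\ref{lem:moments} for the truncated system and check that all constants are $R$-free --- is the right one and matches the paper, whose proof of this lemma is even terser than yours: it records only the pointwise domination of the truncated gain by the full gain and asserts that the estimates carry over. Your attention to the compatibility of the cutoffs between gain and loss is exactly the right instinct, and you correctly single it out as the crux.

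Your resolution of that point, however, rests on a misreading of $Q^R_m$. You assert that the truncation restricts \emph{both} pre-collisional velocities $w,w'$ to $B(0,R)$, so that the factor $\chi_R(v)$ on the post-collisional velocity $v=\frac{nw+(m-n)w'}{m}$ is redundant by convexity. But in \eqref{main-eq-trunc1} the gain cutoff is $\chi_R(v)\chi_R(w)$ --- the outcome and one pre-collisional velocity --- with no factor $\chi_R(w')$. Under that reading $w'$ is constrained only to the larger ball $|w'|\lesssim MR$, and after the reindexing $\ell=m-n$ the gain kernel carries $\chi_R\bigl(\frac{nw+\ell w'}{n+\ell}\bigr)\chi_R(w)$ while the relabeled loss carries $\chi_R(w)\chi_R(w')$; these do \emph{not} agree on the nonempty region $\bigl\{\,|w|\le R,\ |w'|>R,\ \bigl|\frac{nw+\ell w'}{n+\ell}\bigr|\le R\,\bigr\}$, where the gain has a positive contribution with no counterpart in the loss. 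Hence the key inequality ``gain $+$ loss $\le 0$'' is not obtained by relabeling alone, and your conclusion that ``the symmetrization carries over word for word'' is not justified as stated. The truncation under which your convexity remark is valid and the symmetrization really is verbatim is $\chi_R(w)\chi_R(w')$ on the two pre-collisional velocities; this is almost certainly what is intended (note that in the paper the cutoff is not transformed consistently between the two displayed forms of $Q^R_m$ under the change of variables $w'\mapsto\varphi(v,w)$, which supports this). You should state that explicitly as the working definition of $Q^R_m$, rather than claim the cancellation survives under a definition from which it does not follow.
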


\begin{lemma}\label{lem:H_1-trunc}
As soon as $\{f_m^R\}_m$ is regular enough, there exists some constant $C>0$ depending on the initial data $\{f_m(0),\nabla f_m(0)\}_{m=1}^M$, $T<\infty$, $M<\infty$ such that 
\begin{align*}
\sup_{t\in[0,T]}\left( \sum_{m=1}^M\int  |f^{R}_m(t,v)|^2dv +\int_0^t\int |\nabla f^R_m(s,v)|^2dv\ ds\right)\le C.
\end{align*}
The constant $C$ does not depend on R and  is the same as the one in the full problem.
\end{lemma}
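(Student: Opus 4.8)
The plan is to reproduce \emph{verbatim} the energy estimate of Lemma~\ref{lem:H_1}, exploiting the elementary observation that the cut-off $\chi_R$ can only decrease the collision term and therefore never degrades the bound; in particular the constant will be literally the one obtained there, whence the $R$-independence. First I would fix $m\in\{1,\dots,M\}$, multiply the $m$-th equation in~\eqref{main-eq-trunc1} by $f_m^R$ and integrate over $\R^d$. Under the standing regularity/fast-decay hypothesis the transport and diffusion terms are handled by integration by parts exactly as in Lemma~\ref{lem:H_1}, producing $\frac d2\int (f_m^R)^2\,dv-\int|\nabla f_m^R|^2\,dv$.

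For the nonlinearity I would drop the loss term, which is $\le 0$ since $f_m^R\ge0$, and for the gain term use $\chi_R(v)\chi_R(w)\le 1$ to dominate it pointwise by the untruncated gain term
\[
\sum_{n=1}^{m-1}\int f_n^R\big(\varphi(v,w)\big)f_{m-n}^R(w)\,|v-w|\Big(\tfrac mn\Big)^{2}dw .
\]
From this point the computation is word for word that of Lemma~\ref{lem:H_1}: split $|v-w|\le|v|+|w|$, apply Young's inequality against $f_{m-n}^R(w)$, use Jensen's inequality with respect to the sub-probability measure $\big(\int f_n^R\big)^{-1}f_n^R\,dv$ together with the mass bound $\int f_n^R\,dv\le C_0$ (Lemma~\ref{lem:conserved} applied to~\eqref{main-eq-trunc1}), and translation invariance of Lebesgue measure to factor out $\|f_m^R\|_{L^2}^2$; the leftover factor $\int|v|^2 f_n^R\,dv$ is controlled by the moment bound of Lemma~\ref{lem:moments}, which holds for~\eqref{main-eq-trunc1} with an $R$-independent constant, taken at $\ell=2$. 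Summing in $m$ one reaches
\[
\partial_t\sum_{m=1}^M\int (f_m^R)^2\,dv+\sum_{m=1}^M\int|\nabla f_m^R|^2\,dv\le C(M)\sum_{m=1}^M\int (f_m^R)^2\,dv ,
\]
with $C(M)$ independent of $R$, and Gronwall's lemma in integral form then closes the estimate with a constant depending only on $\{f_m(0),\nabla f_m(0)\}_{m=1}^M$, $T$ and $M$, exactly as in Lemma~\ref{lem:H_1}.

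I do not expect any genuine obstacle here, precisely because truncating makes the problem easier: the negative part is discarded and the positive part is dominated by its untruncated counterpart, so the change of variables $w'=\varphi(v,w)$ goes through unchanged after one has bounded $\chi_R$ by $1$. The only points to keep track of are bookkeeping ones — that the moment and mass bounds invoked are the $R$-uniform truncated analogues already recorded, and that the integration by parts is licit under the ``regular enough/fast decay'' hypothesis, which is itself to be verified a posteriori once well-posedness of~\eqref{main-eq-trunc1} is established.
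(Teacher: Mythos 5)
Your proposal is correct and follows the paper's approach exactly: the paper's own argument, given just before the lemma is stated, is precisely the observation that $\chi_R\le 1$ dominates the truncated nonlinearity pointwise by the untruncated one, after which the energy estimate of Lemma~\ref{lem:H_1} (dropping the nonpositive loss term, Young, Jensen, translation invariance, and the $R$-uniform mass and second-moment bounds for~\eqref{main-eq-trunc1}) is repeated verbatim. You have simply written out the steps that the paper summarizes in one line, arriving at the same $R$-independent constant.
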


\begin{lemma}
As soon as $\{f_m^R\}_m$ is regular enough, $\forall k\geq 0$, there exists some constant $C>0$ depending on the initial data $\{f_m(0),\nabla f_m(0)\}_{m=1}^M$, $T<\infty$, $M<\infty$ such that
\begin{align*}
\sup_{t\in[0,T]}\left( \sum_{m=1}^M\int  
|f^{R}_m(t,v)|^2|v|^{2k}dv +\int_0^t\int |\nabla f^R_m(s,v)|^2|v|^{2k}dv\ ds\right)\le C.
\end{align*}
The constant $C$ does not depend on R and  is the same as the one in the full problem.
\end{lemma}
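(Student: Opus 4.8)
The plan is to reproduce, almost verbatim, the energy computation carried out in the proof of Lemma~\ref{lem:H_1-weighted}, the only extra input being the elementary observation that truncating the coagulation kernel by $\chi_R$ can never worsen any of the estimates appearing there. Concretely, I would argue by induction on $k$: the base case $k=0$ is exactly Lemma~\ref{lem:H_1-trunc}, whose constant is already known to be independent of $R$, and for the inductive step I would assume the stated bound at level $k-1$ with a constant that does not depend on $R$. Throughout, the hypothesis ``$\{f_m^R\}_m$ regular enough'' supplies the decay of $f_m^R$ and $\nabla f_m^R$ needed to make all boundary terms in the integrations by parts vanish.

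For the inductive step I would fix $m$ and differentiate $\tfrac12\partial_t\int|v|^{2k}(f_m^R)^2\,dv$ using \eqref{main-eq-trunc1}. The transport and Laplacian contributions are handled exactly as in Lemma~\ref{lem:H_1-weighted} — integration by parts together with a Young inequality on the resulting weighted cross term — yielding a term $(\tfrac d2-k)\int|v|^{2k}(f_m^R)^2\,dv$, the dissipative term $-\tfrac12\int|v|^{2k}|\nabla f_m^R|^2\,dv$, and $2k^2\int|v|^{2k-2}(f_m^R)^2\,dv$, the last of which is absorbed by the induction hypothesis (its weighted $L^2$ part) with a constant free of $R$.

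For the nonlinear contribution $\int|v|^{2k}f_m^R\,Q^R_m(f^R,f^R)\,dv$, the key point is that $0\le\chi_R\le1$ and $f_m^R\ge0$, so in the positive part one has $\chi_R(v)|v|^{2k}f_m^R(v)\le|v|^{2k}f_m^R(v)$ and $\chi_R(w)\le1$; hence the entire positive part is bounded \emph{pointwise} by the corresponding positive part of the untruncated nonlinearity, to which I would apply precisely the change-of-variables/Young-inequality estimates of Lemma~\ref{lem:H_1-weighted}. These invoke only $\|f_m^R\|_{L^2}^2$, the moment $\sum_m\int|v|^{4k+2}f_m^R\,dv$ (bounded uniformly in $R$ by the moment lemma for the truncated system, since $f_m^0\in L^1_{4k+2}$), and lower-weight quantities $\int|w|^2(f_{m-n}^R)^2\,dw$ controlled by the induction hypothesis. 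The negative part of $Q^R_m$ is $-2\sum_n\chi_R(v)\iint|v|^{2k}(f_m^R)^2 f_n^R(w)|v-w|\chi_R(w)\,dw\,dv\le0$ and is simply discarded. Summing over $m=1,\dots,M$ I arrive at
\[
\partial_t\sum_{m=1}^M\int|v|^{2k}(f_m^R)^2\,dv+\tfrac12\sum_{m=1}^M\int|v|^{2k}|\nabla f_m^R|^2\,dv\le C\sum_{m=1}^M\int|v|^{2k}(f_m^R)^2\,dv+C_B,
\]
with $C,C_B$ independent of $R$ and equal to the constants in Lemma~\ref{lem:H_1-weighted}; passing to integral form and invoking Gronwall's lemma as there closes the estimate.

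I expect the only delicate point to be bookkeeping rather than analysis: verifying that the truncation enters solely through the factor $\chi_R\in[0,1]$ combined with nonnegativity of $f^R$ — so that every inequality used in the full problem holds a fortiori for the approximating one — and checking that all auxiliary inputs (the $R$-uniform moment bounds and the $R$-uniform weight-$(k-1)$ estimate) are genuinely furnished by the earlier truncated lemmas, so that no constant degenerates as $R\to\infty$. The genuine analytic work has already been performed in Lemma~\ref{lem:H_1-weighted}.
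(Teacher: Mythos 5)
Your proposal is correct and takes essentially the same route the paper itself indicates: after observing that $0\le\chi_R\le1$ together with $f^R_m\ge0$ makes the positive part of $Q^R_m$ pointwise dominated by that of $Q_m$ (and the negative part has a favorable sign), one simply reruns the energy/induction argument of Lemma~\ref{lem:H_1-weighted} with $R$-uniform inputs from the truncated moment bound and Lemma~\ref{lem:H_1-trunc}. The paper records this one-line observation once for all the truncated a priori estimates and does not spell out the bookkeeping you supply, but the underlying argument is identical.
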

\begin{lemma}
Suppose that $\{f^R_m\}_{m=1}^M$ is a nonnegative solution of the equation $(\ref{main-eq-trunc1})$, such that $f^R_m$ is regular enough. For all $n\in\N,\ k\in\N$, there exists some constant $C>0$ depending on the initial data $\{f_m(0)\}_{m=1}^M$, $T<\infty$, $M<\infty$, $n,k$  such that 
\begin{align*}
\sup_{t\in[0,T]}\left( \sum_{m=1}^M \|f^R_m\|^2_{H^{n}_{k}}\right)\le C.
\end{align*}
The constant C does not depend on R and is the same as the one in the full problem.
\end{lemma}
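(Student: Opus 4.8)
This is the truncated counterpart of Lemma \ref{lem:H^n_k}, and the plan is to reproduce its proof — an induction on $n$ — for \eqref{main-eq-trunc1}, the base cases $n=0,1$ being the truncated moment and weighted $H^1$ estimates established just above (whose constants were already noted to coincide with those of the full problem, hence to be independent of $R$). Working as there with solutions regular enough to justify the integrations by parts below, the only new point is that the cut-off $\chi_R$ must enter nowhere in an $R$-dependent way. The sole properties used are $0\le\chi_R\le1$ pointwise and, once derivatives of $Q^R_m$ appear, a bound on the derivatives of $\chi_R$ uniform in $R$; for the latter it is convenient — and harmless for every $L^1$, $L^2$ and moment estimate, where $\chi_R$ is only ever bounded by $1$ — to take $\chi_R$ smooth with $\mathbf{1}_{B(0,R)}\le\chi_R\le\mathbf{1}_{B(0,2R)}$ and $\|\partial^a\chi_R\|_{L^\infty}\le C_a$ for a constant depending only on $|a|$.

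The first ingredient of the inductive step is the analogue of Proposition \ref{ppn:lebeniz} for $Q^R_m$. Differentiating the representation of $Q^R_m$ under the integral sign now also hits the factors $\chi_R(\cdot)$, so $\partial^j Q^R_m(f,f)$ is a finite sum, over multi-indices $l\le j$ and over which cut-off gets differentiated, of terms of the same structure as $Q^R_m$ but carrying $\partial^{j-l}f$, $\partial^{l}f$ and one factor $\partial^a\chi_R$. Feeding each such term into the duality estimate of Proposition \ref{sobolev} — which only uses $|v-w|\le\langle v\rangle\langle w\rangle$ and translation invariance of Lebesgue measure, and which applies to $Q^R_m$ with the same constant since $\chi_R\le1$ — produces the bound of that proposition times the harmless factor $\|\partial^a\chi_R\|_{L^\infty}\le C_a$. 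This is the one place where smoothness of the truncation is needed, and it is exactly what keeps the resulting constant free of $R$.

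With this in hand I would copy the energy computation of Lemma \ref{lem:H^n_k}: for a multi-index $|j|=n$, testing the equation satisfied by $\partial^j f^R_m$ against $\langle v\rangle^{2k}\partial^j f^R_m$ and summing over $m$, the linear part gives (as there) $\tfrac12\partial_t\sum_m\|\partial^j f^R_m\|_{2,k}^2\le C'_{M,d,k}\sum_m\|\partial^j f^R_m\|_{2,k}^2-\sum_m\|\nabla\partial^j f^R_m\|_{2,k}^2$, while the nonlinear contribution $\sum_m\int\partial^j Q^R_m(f^R,f^R)\,\partial^j f^R_m\langle v\rangle^{2k}dv$ is, after the Leibniz expansion, split into the two extreme terms carrying $\partial^j f^R$, controlled by the previous paragraph by $C\sum_m\|\partial^j f^R_m\|_{2,k+\mu}^2$, and the intermediate terms, which involve only derivatives of order $<n$ and are bounded by the inductive hypothesis. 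The interpolation $\|\partial^j f^R_m\|_{2,k+\mu}^2\le\delta\|\nabla\partial^j f^R_m\|_{2,k}^2+c_\delta\|\partial^{j-1}f^R_m\|_{2,k+\mu}^2$ absorbs the top-order part into $-\sum_m\|\nabla\partial^j f^R_m\|_{2,k}^2$ for $\delta$ small, leaving a Gronwall-type inequality $\partial_t\sum_m\|\partial^j f^R_m\|_{2,k}^2\le C\sum_m\|\partial^j f^R_m\|_{2,k}^2+C_\delta(M,k,n-1)$ with all constants depending only on $T,M,n,k,d$ and the initial data. Gronwall's lemma then closes the induction.

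The substantive point — and the step I expect to be the main obstacle — is the $R$-uniformity bookkeeping: one must make sure that in every place where a cut-off is dropped or differentiated the estimate is closed using only $\sup_{\R^d}|\partial^a\chi_R|\le C_a$ together with the previously established, $R$-free bounds for $\{f^R_m\}_{m=1}^M$ (the truncated moment bounds and the lower-order $H^p_k$ bounds), so that no constant quietly depends on $R$; the auxiliary observation that a non-smooth sharp cut-off cannot be differentiated, handled above by smoothing $\chi_R$ with $R$-uniform derivative bounds, is the other thing to be careful about. Once this is done the constant $C$ is literally the one produced in Lemma \ref{lem:H^n_k}.
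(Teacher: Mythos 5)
Your overall plan matches the paper's: observe that $0\le\chi_R\le1$ and that every weight estimate performed on $Q_m$ only uses $|v-w|\le\langle v\rangle\langle w\rangle$ and translation invariance, so the same bounds hold for $Q^R_m$ with the same constants, and the inductive energy scheme of Lemma~\ref{lem:H^n_k} goes through verbatim. For the base cases ($n=0,1$) and for the moment bounds this is exactly what the paper does, and it really is a one-line observation.

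Where you go beyond the paper — and where your reading is the more careful one — is the point you single out as ``the main obstacle'': the Leibniz-type expansion $\partial^j Q^R_m(f,f)=\sum_l\binom{j}{l}Q^R_m(\partial^{j-l}f,\partial^l f)$ is what Lemma~\ref{lem:H^n_k} rests on, and it is \emph{false} for $Q^R_m$ if $\chi_R$ is literally the indicator $\chi_{\BB(0,R)}$ as the paper defines it, because $\nabla\chi_R$ is then a surface measure, not a function, and $\partial^jQ^R_m$ is not even a function for $|j|\ge1$. The paper does not address this: the text between Lemmas~\ref{lem:moments} and the present one only records the pointwise bound $\chi_R\le1$ and then asserts that all four a priori estimates ``can be recovered''; that assertion is adequate for the moment, $L^2$, and weighted-$H^1$ estimates, where $Q^R_m$ is never differentiated, but not for $H^n_k$ with $n\ge2$. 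Your resolution — replace the sharp indicator by a smooth $\chi_R$ with $\mathbf{1}_{\BB(0,R)}\le\chi_R\le\mathbf{1}_{\BB(0,2R)}$ and $\|\partial^a\chi_R\|_\infty\le C_a$ uniformly in $R$, then track the extra Leibniz terms carrying $\partial^a\chi_R$ through Proposition~\ref{sobolev} with the $R$-free bound $C_a$ — is the standard way to repair this and does produce constants independent of $R$. Do note explicitly that this amends the paper's definition of the truncation (harmlessly, since everything downstream, including the existence argument via semigroups and the passage $R\to\infty$, only uses $\chi_R\le1$, $\chi_R\to1$ pointwise, and compact support); alternatively one could keep the sharp cutoff for the approximating problem and run the $H^n_k$ estimate on a second, smoothed regularization, but that is more bookkeeping for the same outcome. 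In short: correct, same route as the paper in spirit, but you close a gap that the paper's stated argument leaves open at precisely the step you flagged.
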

\subsection{Preliminary on semigroups}
We want to construct mild solution for our approximated problems. With this in mind,  we define the following operator on function defined on $\R^d$:
\begin{align*}
\mathcal{L}f:= \Delta f+  v\cdot\nabla f,
\end{align*}
and rewrite our system of equations as:
\begin{align*}
\begin{cases}\partial_t f^R_m -\mathcal{L} f_m^R= d f^R_m + Q^R_m(f^R,f^R)\\
f^R_m|_{t=0}=f^0_m
\end{cases}
, \quad m=1,...,M
\end{align*}
with the usual definition of space, time and initial condition.

We know, \cite{MPV}
, that endowed with its maximal domain
$$
D_{p,max}(\mathcal{L}):=\left\{u\in L^p(\R^d)\cap W^{2,p}_{loc}(\R^d):\ \mathcal{L}u\in L^p(\R^d)\right\}
$$
the operator $\mathcal{L}$ is the generator of a strongly continuous semigroup $(P_t)_{t\geq0}$ in $L^p(\R^d)$. In particular we can characterize the domain of the operator as follow:
\begin{lemma}[{\cite[Theorem 1]{MPV}}]
The domain $D_{p,max}(\mathcal{L})$ of the generator of the semigroup $(P_t)_{t\geq0}$ coincides with:
$$
D_p(\mathcal{L}):=\left\{u\in W^{2,p}(\R^d):\ v\cdot \nabla u\in L^p(\R^d)\right\}.
$$
\end{lemma}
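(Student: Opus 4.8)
The plan is to establish the two inclusions $D_p(\mathcal{L})\subseteq D_{p,max}(\mathcal{L})$ and $D_{p,max}(\mathcal{L})\subseteq D_p(\mathcal{L})$, of which only the second carries real content. The first is immediate from the definitions: if $u\in W^{2,p}(\R^d)$ and $v\cdot\nabla u\in L^p(\R^d)$, then $\mathcal{L}u=\Delta u+v\cdot\nabla u\in L^p(\R^d)$ and $u\in W^{2,p}_{loc}(\R^d)$, hence $u\in D_{p,max}(\mathcal{L})$; moreover along the way one sees that the graph norm of $\mathcal{L}$ is dominated by the natural norm of $D_p(\mathcal{L})$. (Throughout one should keep in mind the range $1<p<\infty$, where the singular-integral machinery below is available.)

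For the nontrivial inclusion, the crux is the a priori estimate: there exist $\lambda_0\ge 0$ and $C>0$ such that for every $\lambda\ge\lambda_0$ and every $u$ in a core of $D_{p,max}(\mathcal{L})$ (e.g. $C^\infty_c(\R^d)$ or the Schwartz class, where the integrations by parts used below are licit),
\begin{align*}
\|u\|_{W^{2,p}(\R^d)}+\|v\cdot\nabla u\|_{L^p(\R^d)}\le C\big(\|u\|_{L^p(\R^d)}+\|(\lambda-\mathcal{L})u\|_{L^p(\R^d)}\big).
\end{align*}
Granting this estimate, one applies it with $u=R(\lambda,\mathcal{L})f$ for arbitrary $f\in L^p(\R^d)$: since $R(\lambda,\mathcal{L})$ is a bijection from $L^p(\R^d)$ onto $D_{p,max}(\mathcal{L})$, every element of $D_{p,max}(\mathcal{L})$ lies in $W^{2,p}(\R^d)$ with $v\cdot\nabla u\in L^p(\R^d)$, i.e. in $D_p(\mathcal{L})$, which closes the argument once the density of the chosen core is used to pass the estimate from smooth functions to all of $D_{p,max}(\mathcal{L})$.

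To prove the estimate I would use the explicit Mehler-type representation of $(P_t)_{t\ge0}$. The drift part of $\mathcal{L}$ generates the linear flow $v\mapsto e^{t}v$, and incorporating the Laplacian yields a Gaussian kernel $p_t(v,w)$ with ``center'' $e^{t}v$ and covariance of order $e^{2t}-1$. Writing $R(\lambda,\mathcal{L})=\int_0^\infty e^{-\lambda t}P_t\,dt$, the operators $\partial_{ij}R(\lambda,\mathcal{L})$ and $v_i\partial_iR(\lambda,\mathcal{L})$ are integral operators whose kernels I would control by splitting $\int_0^\infty=\int_0^1+\int_1^\infty$: on $(0,1)$ the kernel is comparable to that of the resolvent of the Laplacian, so $L^p$-boundedness for $1<p<\infty$ follows from Calderón--Zygmund theory; on $[1,\infty)$ the Gaussian has width of order $e^{t}$, so the second-derivative and drift-gradient kernels decay and, weighted by $e^{-\lambda t}$ with $\lambda$ large, produce operators bounded simultaneously on $L^1$ and $L^\infty$, hence on $L^p$ by interpolation. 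Adding the two regimes yields the desired bound; the term $\|v\cdot\nabla u\|_p$ is then automatically controlled, as is $\|\nabla u\|_p$ and $\|u\|_p$ themselves via the already-established boundedness of $R(\lambda,\mathcal{L})$.

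The main obstacle is precisely the unboundedness of the drift coefficient $v$: a naive covering/localization argument gives local elliptic constants that blow up with $|v|$ on patches far from the origin, so one must genuinely exploit the dilation structure of the Ornstein--Uhlenbeck flow — either directly through the kernel estimates above, or by rescaling a ball at distance $|v_n|$ from the origin to a unit ball, in which case the gain in the drift coefficient has to be balanced against the scaling of $\Delta$, a balance that closes only because the flow is exponential in $t$. This is the technical heart of the statement and is the content of the cited theorem of Metafune--Pallara--Vespri \cite{MPV}, so in the paper one may simply invoke it.
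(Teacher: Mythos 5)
The paper provides no proof of this lemma: it is imported verbatim as Theorem~1 of Metafune--Pallara--Vespri \cite{MPV} and used purely as a black box, so there is no proof in the paper to compare your sketch against. That said, your outline is a faithful summary of the strategy one expects (and that \cite{MPV} carries out): the trivial inclusion $D_p(\mathcal{L})\subseteq D_{p,\max}(\mathcal{L})$ directly from the definitions, then the a priori estimate
\[
\|u\|_{W^{2,p}}+\|v\cdot\nabla u\|_{L^p}\le C\bigl(\|u\|_{L^p}+\|(\lambda-\mathcal{L})u\|_{L^p}\bigr)
\]
on a core, combined with the bijectivity of $R(\lambda,\mathcal{L}):L^p\to D_{p,\max}(\mathcal{L})$ and density, to get the converse inclusion; and the estimate itself obtained from the explicit Mehler-type kernel with center $e^tv$ and covariance $\sim e^{2t}-1$, a small-time/large-time split, Calder\'on--Zygmund theory for the singular part near $t=0$, and the exponentially weighted large-time tail. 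One small caution on the large-time regime: the kernel computation shows $\|D^2P_t\|_{p\to p}$ stays of order one (it does not decay) as $t\to\infty$, so integrability of $e^{-\lambda t}\|D^2P_t\|_{p\to p}$ relies purely on the $e^{-\lambda t}$ factor, not on decay of the semigroup derivatives; and controlling $v\cdot\nabla R(\lambda,\mathcal{L})$ genuinely requires rewriting $v=e^{-t}\bigl((e^tv-w)+w\bigr)$ to trade the unbounded weight $v$ for the Gaussian variable $w$, which is precisely the point where the exponential dilation structure is indispensable and where a naive localization would fail — you identify this obstacle correctly in your closing paragraph. Since the paper itself just invokes \cite{MPV}, your ``invoke it'' conclusion is exactly what the authors do.
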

Fur such a semigroup $(P_t)_{t\ge 0}$, one can derive the following properties \cite[Theorem 3.3]{MPV}.
\begin{proposition}
The operator $(\mathcal{L}, D_{p,max}(\mathcal{L}))$ generates a semigroup $(P_t)_{t\geq0}$ in $L^p(\R^d)$ which satisfies the estimate
$$
\|P_tf\|_p\leq \|f\|_p
$$
for every $f\in L^p(\R^d)$.
\end{proposition}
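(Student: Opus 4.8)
The plan is to exhibit $(P_t)_{t\ge0}$ explicitly through its Mehler (Ornstein--Uhlenbeck) kernel and then read off both the generation property and the contraction estimate; the only genuinely nontrivial input — the exact description of the maximal domain — is imported from \cite{MPV}. The operator $\cL u=\Delta u+v\cdot\nabla u$ is the Kolmogorov operator of the linear SDE $dV_t=V_t\,dt+\sqrt2\,dB_t$, whose solution started at $v\in\R^d$ is the Gaussian variable $V_t=e^t v+\sqrt2\int_0^t e^{t-s}\,dB_s$, of law $\delta_{e^t v}*\gamma_t$ with $\gamma_t:=\mathcal N\big(0,(e^{2t}-1)I_d\big)$. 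I would therefore \emph{define}
\[
P_t f(v):=\int_{\R^d} f(e^t v+y)\,\gamma_t(dy)=\big(f*\gamma_t\big)(e^t v),\qquad t>0,\quad P_0:=\mathrm{Id},
\]
and verify the semigroup identity $P_{t+s}=P_t P_s$ from the Gaussian convolution/scaling relation $e^{2(t+s)}-1=e^{2s}(e^{2t}-1)+(e^{2s}-1)$ (equivalently, the Markov property of the SDE).

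Strong continuity on $L^p(\R^d)$ follows from the factorization $P_t f=D_{e^t}(f*\gamma_t)$, where $D_a h(v):=h(av)$ satisfies $\|D_a\|_{L^p\to L^p}=a^{-d/p}$. Since $\|D_{e^t}\|_{L^p\to L^p}=e^{-dt/p}$ stays bounded as $t\to0^+$, $f*\gamma_t\to f$ in $L^p$ (approximate identity), and $D_{e^t}f\to f$ in $L^p$ (strong continuity of dilations), the estimate
\[
\|P_t f-f\|_p\le e^{-dt/p}\,\|f*\gamma_t-f\|_p+\|D_{e^t}f-f\|_p
\]
shows $P_t f\to f$. Differentiating under the integral on Schwartz functions gives $\tfrac{d}{dt}\big|_{t=0}P_t f=\cL f$, so the infinitesimal generator of $(P_t)$ extends $\cL$; that its domain is exactly $D_{p,max}(\cL)$ is the statement of \cite[Theorem 1]{MPV} already recalled, which I would cite rather than reprove.

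The contraction estimate is then elementary: by Jensen's inequality in the $y$-variable with respect to the probability measure $\gamma_t$, followed by Tonelli and the change of variables $u=e^t v+y$ (Jacobian $e^{-dt}$),
\[
\|P_t f\|_p^p\le\int_{\R^d}\!\int_{\R^d}|f(e^t v+y)|^p\,\gamma_t(dy)\,dv=\int_{\R^d}\gamma_t(dy)\,e^{-dt}\!\int_{\R^d}|f(u)|^p\,du=e^{-dt}\|f\|_p^p,
\]
so in fact $\|P_t f\|_p\le e^{-dt/p}\|f\|_p\le\|f\|_p$. Without the kernel one reaches the same conclusion by noting that $\cL$ is dissipative in $L^p$: pairing $\cL u$ with $|u|^{p-2}u$, the Laplacian contributes $-(p-1)\int|u|^{p-2}|\nabla u|^2\le0$ and the drift contributes $\tfrac1p\int v\cdot\nabla|u|^p=-\tfrac dp\int|u|^p\le0$; together with the range condition supplied by \cite{MPV}, Lumer--Phillips applies.

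The step I expect to be the real obstacle is the identification of the generator with $(\cL,D_{p,max}(\cL))$ — equivalently, that $\lambda-\cL$ maps $D_{p,max}(\cL)$ onto $L^p(\R^d)$ for $\lambda$ large — since the drift $v\cdot\nabla$ is a first-order term with unbounded coefficients and, for $p\ne2$, there is no Hilbert structure to lean on; this is precisely the content we borrow from \cite{MPV}. Everything else (the explicit kernel, strong continuity, and the $L^p$-contraction bound, in fact with the improved factor $e^{-dt/p}$) is routine.
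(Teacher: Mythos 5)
Your proposal is correct and, unlike the paper, actually supplies a proof: the paper simply records this proposition as \cite[Theorem 3.3]{MPV} and does not argue it. Your derivation via the explicit Mehler representation $P_tf(v)=(f\ast\gamma_t)(e^tv)$ with $\gamma_t=\mathcal N(0,(e^{2t}-1)I_d)$ is the natural concrete route for this Ornstein--Uhlenbeck-type generator, and each step checks out: the semigroup law from $e^{2(t+s)}-1=e^{2s}(e^{2t}-1)+(e^{2s}-1)$; strong continuity from the factorization $P_tf=D_{e^t}(f\ast\gamma_t)$ together with the approximate-identity and dilation estimates; and the $L^p$ bound by Jensen, Tonelli, and the change of variables $u=e^tv+y$, which in fact yields the sharper $\|P_tf\|_p\le e^{-dt/p}\|f\|_p$. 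Your parallel dissipativity computation (the Laplacian contributing $-(p-1)\int|u|^{p-2}|\nabla u|^2$ and the drift $-\tfrac dp\int|u|^p$) recovers the same contraction via Lumer--Phillips, modulo the range condition. You correctly isolate the one genuinely nontrivial ingredient --- that the generator's domain is precisely $D_{p,\max}(\mathcal L)$, i.e.\ that $\lambda-\mathcal L$ is onto $L^p$ from that domain --- and import it from \cite{MPV} rather than reprove it; this is exactly the part the kernel formula alone does not give you for $p\ne2$. Compared with the paper's bare citation, your argument buys a self-contained, explicit picture of $P_t$ and a quantitatively better decay rate, at the modest cost of still relying on \cite{MPV} for the maximal-domain identification. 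One cosmetic remark: your kernel $\gamma_t$ has the correct normalization $\exp(-|w|^2/(2(e^{2t}-1)))$; the paper's displayed $K(t,w)$ has a stray $\pi$ in the exponent, which appears to be a typo there rather than a discrepancy with your formula.
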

\begin{proposition}\label{ppn:grad-estim}
For every $f\in L^p(\R^d)$ and $T>0$, the function $P(\cdot)f$ belongs to $C((0,T],W^{2,p}(\R^d))\cap C^1((0,T],L^{p}_{\text{loc}}(\R^d))$, and satisfies the estimates for $t\in(0,T]$,
\begin{align*}
    \|D^2P_tf\|_p\leq \frac{C_T}{t}\|f\|_p,\ \|\nabla P_tf\|_p\leq\frac{C_T}{\sqrt{t}}\|f\|_p.
\end{align*}
\end{proposition}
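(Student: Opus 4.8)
The strategy is to exploit the explicit Mehler-type formula for the semigroup $(P_t)_{t\ge 0}$, which reduces the entire statement to differentiating under an integral sign and integrating by parts against a Gaussian. Since $\mathcal{L}f=\Delta f+v\cdot\nabla f$ is the generator of the (repulsive) Ornstein--Uhlenbeck process $dV_s=V_s\,ds+\sqrt{2}\,dW_s$, whose solution started at $v$ satisfies $V_t\sim\mathcal N\big(e^{t}v,(e^{2t}-1)I_d\big)$, the first step is to record the representation
\[
(P_tf)(v)=\int_{\R^d}f\big(e^{t}v+\sqrt{e^{2t}-1}\,y\big)\,\gamma(dy),\qquad t>0,\quad \gamma:=\mathcal N(0,I_d).
\]
This is justified by checking that the right-hand side solves $\partial_t u=\mathcal{L}u$ with $u(0,\cdot)=f$ for $f\in C_c^\infty(\R^d)$, and then invoking density and uniqueness of the $C_0$-semigroup generated by $\mathcal L$. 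As an immediate byproduct, Minkowski's integral inequality together with the dilation identity $\|f(e^{t}\cdot+c\,y)\|_p=e^{-td/p}\|f\|_p$ reproves the contraction $\|P_tf\|_p\le e^{-td/p}\|f\|_p\le\|f\|_p$.

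For the quantitative bounds I would first take $f\in C_c^\infty(\R^d)$, so that all differentiations under the integral sign are legitimate, and convert $v$-derivatives into $y$-derivatives via $\nabla_v\big[f(e^tv+\sqrt{e^{2t}-1}\,y)\big]=\tfrac{e^{t}}{\sqrt{e^{2t}-1}}\,\nabla_y\big[f(e^tv+\sqrt{e^{2t}-1}\,y)\big]$ and its second-order analogue with prefactor $\tfrac{e^{2t}}{e^{2t}-1}$. Integrating by parts in $y$ against the Gaussian density moves the derivatives onto the weight and yields
\[
\nabla P_tf(v)=\frac{e^{t}}{\sqrt{e^{2t}-1}}\int_{\R^d}f\big(e^tv+\sqrt{e^{2t}-1}\,y\big)\,y\,\gamma(dy),
\]
\[
D^2P_tf(v)=\frac{e^{2t}}{e^{2t}-1}\int_{\R^d}f\big(e^tv+\sqrt{e^{2t}-1}\,y\big)\,\big(y\otimes y-I_d\big)\,\gamma(dy).
\]
Applying again Minkowski's integral inequality, the dilation identity, and the finiteness of $\int|y|\,\gamma(dy)$ and $\int(|y|^2+1)\,\gamma(dy)$ gives $\|\nabla P_tf\|_p\le C\,\tfrac{e^{t}}{\sqrt{e^{2t}-1}}\|f\|_p$ and $\|D^2P_tf\|_p\le C\,\tfrac{e^{2t}}{e^{2t}-1}\|f\|_p$. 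Since $e^{2t}-1\ge 2t$, for $t\in(0,T]$ both prefactors are controlled, $\tfrac{e^{t}}{\sqrt{e^{2t}-1}}\le C_T/\sqrt{t}$ and $\tfrac{e^{2t}}{e^{2t}-1}\le C_T/t$, which are exactly the claimed estimates. Because the bounds depend on $f$ only through $\|f\|_p$, a density argument extends both the formulas and the estimates to arbitrary $f\in L^p(\R^d)$, and shows in particular that $P_tf\in W^{2,p}(\R^d)$ for every $t\in(0,T]$.

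For the time-regularity, observe that the scalar prefactors $e^t$, $\sqrt{e^{2t}-1}$, $\tfrac{e^{t}}{\sqrt{e^{2t}-1}}$, $\tfrac{e^{2t}}{e^{2t}-1}$ are continuous on $(0,\infty)$, while $t\mapsto f\big(e^t\cdot+\sqrt{e^{2t}-1}\,y\big)$ is continuous into $L^p(\R^d)$ by continuity of dilations and translations in $L^p$; dominated convergence inside the $\gamma$-integral (the Gaussian moments furnishing the dominating functions) then shows that $t\mapsto P_tf$, $t\mapsto\nabla P_tf$, $t\mapsto D^2P_tf$ are continuous from $(0,T]$ into $L^p(\R^d)$, that is, $P(\cdot)f\in C\big((0,T],W^{2,p}(\R^d)\big)$. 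Differentiating the Mehler formula in $t$ (or using that the orbit of an Ornstein--Uhlenbeck semigroup is smooth for positive times) gives $\partial_tP_tf=\mathcal{L}P_tf=\Delta P_tf+v\cdot\nabla P_tf$, which belongs to $L^p_{\mathrm{loc}}(\R^d)$ since $v$ is locally bounded and $\Delta P_tf,\nabla P_tf\in L^p(\R^d)$, and which depends continuously on $t$ in $L^p_{\mathrm{loc}}(\R^d)$ by the continuity statements just obtained; hence $P(\cdot)f\in C^1\big((0,T],L^p_{\mathrm{loc}}(\R^d)\big)$. The delicate point is the density step: one must verify that the a priori inequalities proved for $f\in C_c^\infty(\R^d)$ are genuinely of the form $\big(\text{explicit function of }t\big)\cdot\|f\|_p$, so that they survive passage to the $L^p$-closure, and keep careful track of the prefactors as $t\downarrow 0$ to obtain the sharp powers $t^{-1/2}$ and $t^{-1}$; identifying $(P_t)$ with the Mehler semigroup is routine but has to be stated explicitly, since all the quantitative content rests on the kernel rather than on abstract semigroup theory. (Alternatively, one can simply quote the corresponding statements from \cite{MPV}, which carry out this analysis for the general Ornstein--Uhlenbeck operator.)
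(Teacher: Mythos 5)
The paper does not prove this Proposition at all — it quotes it verbatim from \cite[Theorem 3.3]{MPV} as a black-box fact about the Ornstein–Uhlenbeck semigroup. Your proof is therefore a genuinely different, self-contained route: you identify $(P_t)$ with the explicit Mehler formula for the repulsive OU process, trade $v$-derivatives for $y$-derivatives via the chain-rule factor $e^t/\sqrt{e^{2t}-1}$, integrate by parts against the Gaussian to move derivatives onto Hermite weights, and read off the sharp $t^{-1/2}$, $t^{-1}$ singularities from the prefactors $e^t/\sqrt{e^{2t}-1}$, $e^{2t}/(e^{2t}-1)$ together with $e^{2t}-1\ge 2t$. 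The computation is correct (one can also cross-check it against the explicit kernel $K(t,w)$ appearing later in the paper, whose exponent looks like it should read $2(e^{2t}-1)$ rather than $2\pi(e^{2t}-1)$), and your approach is more elementary and more informative than the abstract machinery of \cite{MPV}, which handles a general class of elliptic operators with unbounded coefficients; the trade-off is only loss of generality, which is irrelevant here. Two points worth tightening if you intend this as a full replacement for the citation: (i) you should state explicitly that the Mehler formula \emph{is} the $C_0$-semigroup of $(\mathcal{L},D_{p,\max}(\mathcal{L}))$ — this follows because both solve the abstract Cauchy problem on a common core and the $C_0$-semigroup generated by $\mathcal L$ is unique, but it is not automatic; and (ii) the $C^1\big((0,T],L^p_{\mathrm{loc}}\big)$ regularity is glossed over: since $P_tf\notin D_p(\mathcal{L})$ for general $f\in L^p$ (the term $v\cdot\nabla P_tf$ need not be globally $L^p$), you cannot invoke strong differentiability of orbits; instead you should differentiate the Mehler formula in $t$ under the integral sign and verify that the resulting difference quotients converge in $L^p(K)$ for each compact $K\subset\R^d$, using the estimates you already have as dominating bounds.
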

\begin{proposition}\label{ppn:reg-semigp}
Let $T>0$ and $g\in C([0,T],L^p(\R^d))$ be given, and consider the mild solution $u$ of the
Cauchy problem 
\begin{align*}
    \begin{cases}
    \partial_t u-\mathcal{L} u = g\ &in\ [0,T]\times\R^d\\
    u(0) = f\ &in\ \R^d,
    \end{cases}
\end{align*}
with $f=0$. Then, $u$ belongs to $C([0,T],W^{2,p}(\R^d))\cap W^{1,p}_{\text{loc}}([0,T]\times\R^d)$.
\end{proposition}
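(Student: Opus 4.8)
Since the initial datum is $f=0$, the mild solution is $u(t)=\int_0^t P_{t-s}g(s)\,ds$, and I would extract its regularity directly from the smoothing estimates of Proposition~\ref{ppn:grad-estim} together with the characterization $D_{p,\max}(\mathcal L)=\{u\in W^{2,p}(\R^d):v\cdot\nabla u\in L^p\}$. That $u\in C([0,T],L^p(\R^d))$ is immediate from $\|P_\tau\|_{p\to p}\le 1$ and the uniform continuity of $g$. Since $\|\nabla P_\tau\|_{p\to p}\le C_T\tau^{-1/2}$ is integrable near $\tau=0$, the spatial gradient $\nabla u(t)=\int_0^t\nabla P_{t-s}g(s)\,ds$ satisfies $\sup_{t\le T}\|\nabla u(t)\|_p\lesssim\sqrt T\,\|g\|_{C([0,T],L^p)}$, and splitting the integral plus using strong continuity of $P_\tau$ upgrades this to $u\in C([0,T],W^{1,p}(\R^d))$.

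For the $W^{1,p}_{\mathrm{loc}}([0,T]\times\R^d)$ statement I would localize: $g\in C([0,T],L^p)\subset L^p([0,T]\times\R^d)$, so fixing $R>0$ and a space-time cutoff $\chi\in C_c^\infty([0,T)\times B_{2R})$ with $\chi\equiv 1$ on $[0,T)\times B_R$, the function $\chi u$ solves a heat-type equation on $[0,T]\times\R^d$ whose right-hand side lies in $L^p$ (the drift $v\cdot\nabla u$ contributes a bounded-coefficient term on $\operatorname{supp}\chi$, and the commutator terms involve only $u$ and $\nabla u$, already in $L^p$). Interior parabolic $L^p$-regularity then gives $\chi u\in W^{1,p}([0,T]\times\R^d)$ with $D^2(\chi u)\in L^p$; letting $R\uparrow\infty$ yields $u\in W^{1,p}_{\mathrm{loc}}([0,T]\times\R^d)\cap L^p_{\mathrm{loc}}([0,T],W^{2,p}_{\mathrm{loc}}(\R^d))$ and the identity $\partial_t u=\mathcal L u+g$ a.e.

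The remaining, and genuinely delicate, point is $u\in C([0,T],W^{2,p}(\R^d))$, since $\|D^2P_\tau\|_{p\to p}\le C_T\tau^{-1}$ is \emph{not} integrable. Here I would split $u(t)=u_1(t)+u_2(t)$ with $u_1(t)=\int_0^t P_{t-s}\bigl(g(s)-g(t)\bigr)\,ds$ and $u_2(t)=\int_0^t P_\sigma g(t)\,d\sigma$. For $u_2$, differentiating under the integral via $\tfrac{d}{d\sigma}P_\sigma=\mathcal L P_\sigma$ gives $\mathcal L u_2(t)=(P_t-I)g(t)$, hence $u_2(t)\in D_{p,\max}(\mathcal L)\hookrightarrow W^{2,p}$ with $\|u_2(t)\|_{W^{2,p}}\lesssim\|u_2(t)\|_p+\|\mathcal L u_2(t)\|_p\lesssim\|g\|_{C([0,T],L^p)}$, and $t\mapsto u_2(t)$ is $W^{2,p}$-continuous by strong continuity of $P$ and of $g$. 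For $u_1$ one only has $\|D^2u_1(t)\|_p\le C_T\int_0^t\sigma^{-1}\omega_g(\sigma)\,d\sigma$ with $\omega_g$ the modulus of continuity of $g$ — this is the classical maximal-regularity obstruction, finite when $g$ is Dini- (in particular Hölder-) continuous in time. I would therefore either establish the bound $\|u\|_{C([0,T],W^{2,p})}\lesssim\|g\|$ first for time-Lipschitz $g$ and pass to the limit, or — more economically — simply invoke the $L^p$-maximal regularity for $(\mathcal L,D_{p,\max}(\mathcal L))$ proved in \cite{MPV}, of which this proposition is a restatement; this last step is the main obstacle and the one I expect the authors to dispatch by citation.
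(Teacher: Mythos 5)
The paper gives no proof of this proposition at all: it, together with the two propositions preceding it, is introduced by the sentence ``one can derive the following properties [MPV, Theorem~3.3]'' and left at that. So your final diagnosis is exactly right --- the authors dispatch the whole statement by citation, and in particular the $C([0,T],W^{2,p})$ part, which is the genuine content, is pulled wholesale from the maximal-regularity theory developed in \cite{MPV}. In that sense your proposal and the paper's ``proof'' coincide at the only step that matters.

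Your preliminary reconstruction of the easier parts is sound (the $C([0,T],L^p)$ and $C([0,T],W^{1,p})$ bounds follow from the semigroup estimates as you say, and the local $W^{1,p}$ argument via cutoffs and interior parabolic regularity is the standard route, modulo the bookkeeping for the commutators with the unbounded drift $v\cdot\nabla$). One caution on the alternative you float before settling on citation: you suggest proving the bound $\|u\|_{C([0,T],W^{2,p})}\lesssim\|g\|$ first for time-Lipschitz $g$ and ``passing to the limit.'' That cannot work as stated --- the estimate you would get for Lipschitz $g$ involves the Lipschitz (or Dini) modulus, not $\|g\|_{C([0,T],L^p)}$, so there is no uniform bound to pass to the limit with; this is precisely the maximal-regularity obstruction you identify in the previous sentence, and Lipschitz approximation does not circumvent it. For the general continuous-in-time $g$ the correct tool is exactly the Ornstein--Uhlenbeck regularity theory of \cite{MPV} (or Da~Prato--Sinestrari type results with the appropriate hypotheses), which is why the citation is not merely ``more economical'' but actually necessary.
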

Thus, we can define the notion of mild solution for our system. For simplicity of notation we omit the dependence on $m$.
\begin{definition}[$L_v^2$-mild solution for approximated problems]
We rewrite first our approximated problems as an evolution equation in $L^p$ space
\begin{align*}
    \Dot{f}+\mathcal{L}f=df+Q(f,f),\ t>0\,\ f(0)=f^0.
\end{align*}
Fix $T>0$, a function $f\in C([0,T],L^2)$ is said to be a $L^2$-mild solution to $(\ref{main-eq-trunc1})$ on $[0,T]$ if $f$ solves the integral equation
\begin{align*}
    f(t)=P_tf^0+\int_0^tP_{t-s}\left(df(s)+Q(f(s),f(s))\right)ds,\ t\in[0,T].
\end{align*}
\end{definition}
\subsection{Aubin-Lions' Theorem in full Space}
\begin{lemma}[Aubin-Lions weighted] \label{lem:aubin}
Consider $H^1(\R^d)$, the usual Sobolev space, and $L^2(\R^d,|v|^2dx)$, the weighted Lebesgue space containing functions $f$ for which $\int|f|^2|v|^2dv<\infty$. Then 
$$
H^1(\R^d)\cap L^2(\R^d,|v|^2dv)\subset \joinrel\subset L^2(\R^d)
$$
and the embedding is compact.
As a consequence for every $q,k\geq1$ we have that
$$
L^p\left(0,T;H^1(\R^d)\cap L^2(\R^d,|v|^2dv)\right)\cap W^{1,q}(0,T;H^{-k}(\R^d))\subset\joinrel\subset L^p(0,T;L^2(\R^d))
$$
 for $p\in [1,\infty]$ and the embedding is compact.
\end{lemma}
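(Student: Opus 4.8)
The plan is to prove the two assertions in turn: first the static weighted compact embedding, and then to bootstrap to the space--time statement by invoking the Aubin--Lions--Simon lemma.

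For the static embedding $H^1(\R^d)\cap L^2(\R^d,|v|^2dv)\subset\joinrel\subset L^2(\R^d)$, I would take a bounded sequence $(f_n)$, say with $\|f_n\|_{H^1(\R^d)}^2+\int_{\R^d}|v|^2|f_n(v)|^2\,dv\le C$ for all $n$, and produce a subsequence converging strongly in $L^2(\R^d)$. The two ingredients are local compactness and tightness at infinity. For local compactness, for each $R\in\N$ the classical Rellich--Kondrachov theorem gives a compact embedding $H^1(B_R)\hookrightarrow L^2(B_R)$, where $B_R:=B(0,R)$; extracting successively on $B_1\subset B_2\subset\cdots$ and passing to a diagonal subsequence produces a subsequence (not relabelled) and a function $f\in L^2_{\mathrm{loc}}(\R^d)$ with $f_n\to f$ in $L^2(B_R)$ for every $R$. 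For tightness, the moment weight gives directly
\[
\int_{\{|v|>R\}}|f_n(v)|^2\,dv\le \frac1{R^2}\int_{\{|v|>R\}}|v|^2|f_n(v)|^2\,dv\le \frac{C}{R^2},
\]
uniformly in $n$, and the same bound holds for $f$ by Fatou's lemma. Hence, given $\varepsilon>0$, first pick $R$ with $C/R^2<\varepsilon$ and then $N$ with $\|f_n-f\|_{L^2(B_R)}^2<\varepsilon$ for $n\ge N$, so that $\|f_n-f\|_{L^2(\R^d)}^2\le\|f_n-f\|_{L^2(B_R)}^2+4C/R^2<5\varepsilon$. This gives strong $L^2(\R^d)$ convergence, i.e. the compact embedding.

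For the space--time statement, set $X:=H^1(\R^d)\cap L^2(\R^d,|v|^2dv)$, $B:=L^2(\R^d)$ and $Y:=H^{-k}(\R^d)$. The first part gives $X\subset\joinrel\subset B$, while the inclusion $B=L^2(\R^d)\hookrightarrow H^{-k}(\R^d)$ is continuous for $k\ge1$ (it is the transpose of the continuous inclusion $H^k(\R^d)\hookrightarrow L^2(\R^d)$, upon identifying $L^2$ with its dual). Thus $X\subset\joinrel\subset B\hookrightarrow Y$ is an admissible triple, and for a bounded sequence $(u_n)$ in $L^p(0,T;X)\cap W^{1,q}(0,T;H^{-k})$ we have, since $[0,T]$ has finite length and $q\ge1$, that $(\partial_t u_n)$ is bounded in $L^1(0,T;Y)$. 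The Aubin--Lions--Simon lemma then yields relative compactness of $(u_n)$ in $L^p(0,T;B)$ for $1\le p<\infty$, and in $C([0,T];B)$ for $p=\infty$; since $C([0,T];B)$ embeds continuously into $L^\infty(0,T;B)$, the conclusion covers the whole range $p\in[1,\infty]$.

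The only genuinely non-routine point is the static weighted embedding, and the only place compactness could fail there is escape of mass to spatial infinity; this is exactly what the quadratic moment weight prevents, via the elementary tail bound above. Everything else is a diagonal argument together with a citation of Simon's compactness theorem, so I do not anticipate serious obstacles beyond the bookkeeping of the $p=\infty$ endpoint.
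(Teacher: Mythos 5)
Your proof is correct and follows essentially the same route as the paper's: the static embedding is obtained by combining a uniform tail bound at spatial infinity (which the paper derives by contrapositive from the same Chebyshev-type inequality $\int_{|v|>R}|f|^2\,dv\le R^{-2}\int |v|^2|f|^2\,dv$) with local Rellich--Kondrachov compactness, and the space--time statement is then cited from Simon's theorem. The only cosmetic differences are that you argue via sequential compactness and a diagonal extraction where the paper uses the $\varepsilon$-net characterization of total boundedness, and that you spell out the $p=\infty$ endpoint through $C([0,T];L^2)$, which the paper leaves implicit in the reference to Simon.
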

\begin{proof}
Call $X:=H^1(\R^d)\cap L^2(\R^d,|v|^2dv)$ and consider $Y\subset X$ a bounded set.\\
We claim that $\forall \varepsilon>0,\exists N>0$ such that for all $f\in Y$:
$$
\int_{B^c_N} f^2<\varepsilon/2,
$$
Where $B^c_N$ is the complement in $\R^d$ of the ball of radius $N$ and centered at the origin.
Otherwise, assume this is not the case, then for some $\varepsilon>0$, for every $N>0$ there is some $f\in Y$ such that $\|f\|_{L^2(B^c_N)}\geq\varepsilon$.\\ 
This implies that $\|f\|_{X}\geq \|f^2|v|^2\|_{L^1(B^c_N)}\geq N^2\varepsilon$ so that the set $Y$ is not bounded, and this contradicts the hypotheses. So we consider the ball $B_N$ and we can apply Rellich-Kondrachov theorem to ensure the compact embedding to $L^2(B_N)$.\\
Now we recall that compactness of a set $A\subset X$ means that $\forall\varepsilon>0$ we find $f_1,...,f_k\in X$, where $k=k(\varepsilon)$, such that $A\subset\cup_{i=1}^k B(f_i,\varepsilon)$.\\
This means that $\forall \varepsilon>0$ we find functions $f_1,...,f_k\in L^2(B_N),\ k=k(\varepsilon)$, such that $\forall f\in Y$ we have some $f_j$ such that 
$$\|f-f_j\|_{L^2(B_N)}<\varepsilon/2$$
We recall then that $\|f\|_{L^2(B^c_N)}<\varepsilon/2$, so we even have
$\|f-f_j\|_{L^2(\R^d)}<\varepsilon$,
which implies the compact embedding $Y\subset\joinrel\subset L^2(\R^d)$.\\
The second part of the theorem now follows from the usual Aubin-Lions Theorem cf. \cite[Corollary 5]{Simon}, thus concluding the proof.
\end{proof}

\section{Existence and Uniqueness for the approximating problems}\label{sec:exist-uniq}
\begin{theorem}[Existence and Uniqueness]\label{thm:approx-plm}
Consider $d\geq 1$. Given any initial value $f^0_m\in L^2,\forall
 m=1,...,M$, problem $(\ref{main-eq-trunc1})$ possesses a unique maximal $L^2$-mild solution $f:= f(\cdot; f^0)$ on $[0,T(f^0))$. The maximal interval is such that $[0,T(f^0))$ is an open interval in $\R_+$. In addition,  
 $$
 f\in L^\infty([0,T(f^0)), L^2(\R^d)^{\otimes M})\cap W_{loc}^{1,2}((0,T(f^0)), H^2(\R^d)^{\otimes M}).
 $$
In addition: if $T(f^0)<\infty$, then
$$
\sup_{T(f^0)/2<t<T(f^0)}\|f_t\|_2=\infty.
$$
\end{theorem}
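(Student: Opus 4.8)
The plan is to read \eqref{main-eq-trunc1} as the semilinear evolution equation $\dot f+\mathcal L f=df+Q^R(f,f)$, $f(0)=f^0$, in the Banach space $L^2(\R^d)^{\otimes M}$, and to run the classical Picard/contraction‑mapping argument followed by the standard continuation scheme. The only inputs needed are the contraction estimate $\|P_t\|_{L^2\to L^2}\le 1$, the smoothing bounds of Proposition~\ref{ppn:grad-estim}, the mild‑solution regularity of Proposition~\ref{ppn:reg-semigp}, and a Lipschitz bound for the \emph{truncated} nonlinearity on $L^2$. (The a priori estimates play no role in this theorem; they, and hence global existence, are invoked only afterwards once the regularity below is available.) The first thing I would prove is that, for each fixed $R$, the bilinear map $Q^R$ satisfies $\|Q^R(g,h)\|_{2,M}\le C_R\|g\|_{2,M}\|h\|_{2,M}$, so that $\|Q^R(f,f)-Q^R(g,g)\|_{2,M}\le C_R(\|f\|_{2,M}+\|g\|_{2,M})\|f-g\|_{2,M}$. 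This is where the cut‑offs $\chi_R$ are essential and why no weighted norms (as in Proposition~\ref{sobolev}) are required: on $\mathbb{B}(0,R)$ one has $|v-w|\le 2R$, Cauchy--Schwarz in $w$ turns the $w$‑integral into $\|\cdot\|_{L^1(\mathbb{B}(0,R))}\le|\mathbb{B}(0,R)|^{1/2}\|\cdot\|_{L^2}$, and in the gain term the substitution $z=\varphi(v,w)$ has constant Jacobian $(n/(m-n))^d$ (legitimate since $1\le n\le m-1$) and keeps $z$ in a fixed ball, yielding $\|g_n(\varphi(v,\cdot))\|_{L^2(\mathbb{B}(0,R))}\le C\|g_n\|_2$ uniformly in $v$; the linear term $df$ is trivially bounded on $L^2$.

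Next I would set up the contraction. On $X_\tau:=C([0,\tau];L^2(\R^d)^{\otimes M})$ define $\Phi(f)(t):=P_tf^0+\int_0^t P_{t-s}\big(df(s)+Q^R(f(s),f(s))\big)\,ds$, with $P_t$ acting coordinatewise. Using $\|P_t\|_{L^2\to L^2}\le1$ and the bilinear bound above, $\Phi$ maps the ball $\{\|f\|_{X_\tau}\le 2\|f^0\|_{2,M}\}$ into itself and is a contraction there provided $\tau=\tau(\|f^0\|_{2,M},R)$ is small enough; Banach's fixed point theorem gives a unique $L^2$‑mild solution on $[0,\tau]$, and a Gronwall estimate built on the same local Lipschitz bound shows that any two mild solutions on a common interval agree. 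Taking the union of all intervals of existence and using this uniqueness produces a single mild solution on a maximal interval $[0,T(f^0))$ that admits no prolongation; in particular $T(f^0)$ is not attained, so the interval is right‑open. For the blow‑up alternative, assume $T(f^0)<\infty$ and, for contradiction, $B:=\sup_{T(f^0)/2<t<T(f^0)}\|f_t\|_{2,M}<\infty$. Since the length of the interval of existence in the local step depends only on the $L^2$‑norm of the datum (and on $R$), for any $t_0\in(T(f^0)-\tau(B,R),T(f^0))$ the solution started from $f(t_0)$ lives on $[t_0,t_0+\tau(B,R)]$ and, by uniqueness, extends $f$ past $T(f^0)$, a contradiction; hence $\sup_{T(f^0)/2<t<T(f^0)}\|f_t\|_2=\infty$.

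It remains to upgrade the regularity. For $0<\delta<T'<T(f^0)$ and $t\in[\delta,T']$ I would write $f(t)=P_{t-\delta}f(\delta)+\int_\delta^t P_{t-s}g(s)\,ds$ with $g(s):=df(s)+Q^R(f(s),f(s))$; since $f\in C([\delta,T'];L^2(\R^d)^{\otimes M})$ and $Q^R$ is bounded bilinear, $g\in C([\delta,T'];L^2(\R^d)^{\otimes M})$. Proposition~\ref{ppn:grad-estim} gives $P_{\cdot-\delta}f(\delta)\in C((\delta,T'];W^{2,2}(\R^d)^{\otimes M})$ with the $(t-\delta)^{-1}$, $(t-\delta)^{-1/2}$ smoothing, and Proposition~\ref{ppn:reg-semigp}, applied on $[\delta,T']$ to the Duhamel term with zero initial datum, gives $\int_\delta^\cdot P_{\cdot-s}g(s)\,ds\in C([\delta,T'];W^{2,2}(\R^d)^{\otimes M})\cap W^{1,2}_{loc}([\delta,T']\times\R^d)$, with $f(t)\in D_{2,max}(\mathcal L)$ for $t>\delta$ so that the equation holds strongly. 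As $\delta\downarrow 0$ this yields $f\in W^{1,2}_{loc}((0,T(f^0)),H^2(\R^d)^{\otimes M})$, while $f\in L^\infty_{loc}([0,T(f^0)),L^2(\R^d)^{\otimes M})$ is immediate from $f\in C([0,T(f^0)),L^2(\R^d)^{\otimes M})$.

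I do not anticipate a deep obstacle here: this is the textbook semilinear‑parabolic argument. The one genuinely substantive point is the $L^2\!\to\!L^2$ bilinear estimate for $Q^R$, which is precisely where the truncation is used (and which is much easier than the weighted estimate of Proposition~\ref{sobolev}); everything downstream — contraction, continuation, blow‑up, bootstrap — is soft. The only mildly delicate bookkeeping is matching the claimed $W^{1,2}_{loc}((0,T(f^0)),H^2)$ regularity with what the drift‑perturbed (hence not analytic in the usual sense) semigroup provides through Propositions~\ref{ppn:grad-estim}--\ref{ppn:reg-semigp}, in particular checking that $f(t)$ lands in the maximal domain $D_{2,max}(\mathcal L)$ for positive times.
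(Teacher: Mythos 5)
Your proposal is correct and is essentially the paper's own proof: a Banach fixed-point argument in $L^\infty_t L^2_v$ (the paper uses $X_T=L^\infty([0,T];L^2_M)$, you use $C([0,\tau];L^2)^{\otimes M}$, a cosmetic difference), driven by the same $L^2\to L^2$ bilinear bound on $Q^R$ that uses the cut-offs $\chi_R$ to kill the weight $|v-w|$ and the change of variables $z=\varphi(v,w)$, followed by the same continuation/blow-up dichotomy via a local existence time depending only on $\|f^0\|_2$ and $R$, and the same appeal to Propositions~\ref{ppn:grad-estim}--\ref{ppn:reg-semigp} for the Sobolev-in-$v$ and $W^{1,2}_{\mathrm{loc}}$ regularity. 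The one place where your write-up is arguably more honest than the paper is the regularity bookkeeping at the end: Proposition~\ref{ppn:reg-semigp} gives $u\in C([0,T];W^{2,2})\cap W^{1,2}_{\mathrm{loc}}([0,T]\times\R^d)$, i.e.\ $\partial_t u\in L^2_{\mathrm{loc}}$ in the joint variables, which yields $W^{1,2}_{\mathrm{loc}}((0,T(f^0));L^2)$ rather than $W^{1,2}_{\mathrm{loc}}((0,T(f^0));H^2)$ as literally written in the theorem; the paper itself does not close this gap in the proof (the later text states $f\in L^\infty_t H^2_v\cap W^{1,2}_{\mathrm{loc}}([0,T]\times\R^d)$, which matches what the propositions actually give), so this is a pre-existing imprecision in the statement rather than a flaw in your argument.
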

\begin{proof}
Let $T_0>0$ be arbitrary and define $X_T:=L^\infty([0,T],L_M^2)$ for $T\in(0,T_0]$, where we endowed the space with the usual sup norm in time and $L_M^2$ norm in velocity as $\|f\|_{L^2_M}:=\sum_{m=1}^M\|f_m\|_{L^2}.$\\
Consider $f^0\in L_M^2$ then from Proposition \ref{ppn:grad-estim} we know that $P_tf^0\in X_T$. From the same theorem it is implied that exist a constant $\kappa:=\kappa(T_0)>0$ such that $\|P_t f^=\|\leq e^\kappa(T_0)t$; we show that this is enough to define a map $\Gamma$ from $X_T\rightarrow X_T$, given by
$$
\Gamma(f):=P_\cdot f^0+\int_0^\cdot P_{\cdot-s}(df-Q^R(f,f))ds,\ f\in X_T.
$$
First, thanks to the estimate on the semigroup and the property of the truncated nonlinearity, we have that, as soon as $f\in X_T$
\begin{align*}
\begin{cases}
    &\|Q^R_m(f,f)\|_2^2\leq C_R (\sum_{m=1}^M \|f_m\|_2)^2<\infty.\\[5pt]
    & \int_0^\cdot P_{\cdot-s}(df-Q^R(f,f))ds\in X_T.
\end{cases}
\end{align*}
In particular we obtain the first estimate from the following computation on each of the terms of the nonlinearity
$$
\int \left|\int |v-w|_Rf_{n}(h(v,w))f_{m-n}(w)dw\right|^2dv\leq 2R\int_{B_R} \|f_n\|_2^2\|f_{m-n}\|_2^2dv= 2R|B_R|\|f_n\|_2^2\|f_{m-n}\|_2^2.
$$
Now we notice that the function $\Gamma$ also satisfies
\begin{align*}
    \|\Gamma(f)-P_\cdot f^0\|_{X_T}&=\left\|\int_0^\cdot P_{\cdot-s}(df-Q^R(f,f))ds\right\|_{X_T}\\
    &\leq \kappa T\left(d+C_R(\sum_{m=1}^M \|f_m\|_2)\right)(\sum_{m=1}^M \|f_m\|_2),
\end{align*}
for all $f\in X_T$. And we can prove the continuity of the map
\begin{align*}
    \|\Gamma(f)-\Gamma(g)\|_{X_T}&=\left\|\int_0^\cdot P_{\cdot-s}(d(f-g)+\left(Q^R(g,g)-Q^R(f,f)\right)ds\right\|_{X_T}\\
    &\leq T\left(d+C_R\|f\|_{X_T}+C_R\|g\|_{X_T}\right)\|f-g\|_{X_T},
\end{align*}
for all $f,g\in X_T$.\\
If we find a way to restrict the mapping $\Gamma$ to some closed subset, and prove that it is a contraction then, with a standard fix point argument, we conclude local existence and uniqueness for each approximating problems.\\
Consider to this extent the norm of the initial condition $\|P_\cdot f^0\|_{X_T}=\gamma$, we choose the  ball in $X_T$, call it $B_T$, centered in $P_\cdot f^0$ with radius $\gamma$. So that every function $f\in B_T$ as norm $\|f\|_{X_T}\leq 2\gamma$. Then selecting 
$$
T<\min\left\{ \frac{1}{d+4C_R\gamma},\frac{1}{\kappa(d+C_R 2\gamma)}\right\}
$$
The two inequality implies that the map $\Gamma$ send function in $B_T$ to itself and it is a contraction. Therefore there exist a unique $\tilde{f}\in B_T$ such that $\Gamma(\tilde{f})=\tilde{f}$, that is,
$$
\tilde{f}\in L^\infty([0,T],L_M^2)\ \text{ and }\ \tilde{f}=P_{\cdot}f^0+\int_0^{\cdot}P_{\cdot-s}\left(d\tilde{f}+Q^R(\tilde{f},\tilde{f})\right)ds.
$$
and $\tilde{f}$ is a mild-$L^2$ solution to the approximated problems.\\

Clearly we can extend the solution $\tilde{f}$ to a unique maximal solution $\tilde{f}:=\tilde{f}(f^0)$ with maximal interval $[0,T(f^0))$ that must be open in $\R^+$.\\
Consider $T^0:=T(f^0)<\infty$ and suppose there exist an increasing sequence $t_i\rightarrow T^0$ such that $\|f_{t_i}\|_p\leq r<\infty,\ \forall i\in\N$. Fix now $\overline{T}>T^0$ and fix $\gamma(\overline{T})$ the constant defined in the existence part of the theorem for the ball in which we perform the contraction. We have from the semigroup $P_t$ in the interval $[0,\overline{T}]$ that
$$
\|P_t (f_{t_i})\|_{X_{\overline{T}}}\lesssim re^{-\overline{T}}.
$$
We choose then $\overline{T}>0$ such that it holds $\|P_t (f_{t_i})\|_{X_{\overline{T}}}\leq \gamma(T^0)$, for $i\in \N$.\\
We can now repeat the same existence scheme and we obtain that $f$ exists at least on $[t_i,t_i+\overline{T}],\ \forall i\in\N$ contradicting the maximal extension of the interval. Thus, for $f^0\in L^2$ we must have
$$
\sup_{T(f^0)-\epsilon<t<T(f^0)}\|f_t\|_{L^2_M}=\infty.
$$
This concludes the proof.
\end{proof}
\begin{theorem}
Given the solution $f\in X_T$ of the truncated equation from Theorem \ref{thm:approx-plm}, we have that $f(t; f^0) \in L_+^2$ for $t \in [0,T(f^0))$ provided that $f^0 \in L_+^2$.
\end{theorem}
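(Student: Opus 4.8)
The plan is to establish nonnegativity of the mild solution by a standard approximation-plus-positivity argument for the underlying linear semigroup, combined with a Picard iteration that preserves the positive cone. First I would recall that the operator $\mathcal L f = \Delta f + v\cdot\nabla f$ generates a positivity-preserving semigroup $(P_t)_{t\ge0}$ on $L^2(\R^d)$: indeed $\mathcal L$ is (up to the zeroth-order term already separated out) an Ornstein--Uhlenbeck-type operator, and its semigroup admits an explicit Mehler kernel representation with a strictly positive Gaussian kernel, so $f^0\in L^2_+$ implies $P_tf^0\in L^2_+$ for all $t\ge0$. The constant $e^{dt}$ coming from the $df$ term is obviously positive and does not affect the sign.

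Next I would set up the Picard iteration used in Theorem \ref{thm:approx-plm} but decompose the truncated nonlinearity into its gain and loss parts, $Q^R_m(f,f) = Q^{R,+}_m(f,f) - Q^{R,-}_m(f,f)$, where, on the positive cone, $Q^{R,+}_m(f,f)\ge 0$ pointwise and $Q^{R,-}_m(f,f) = 2\sum_{n=1}^M \chi_R(v) f_m(v)\int f_n(w)|v-w|\chi_R(w)\,dw = f_m(v)\,\ell_m^R(f)(v)$ with $\ell_m^R(f)\ge 0$. The key observation is that the loss term is linear in $f_m$ with a nonnegative multiplier $\ell_m^R(f)$, so the equation for each $f_m$ has the Duhamel form $f_m(t) = P_t f^0_m + \int_0^t P_{t-s}\big(d f_m(s) + Q^{R,+}_m(f(s),f(s)) - f_m(s)\ell_m^R(f(s))\big)\,ds$. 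To exploit this I would introduce the iteration $f^{(0)}\equiv 0$ (or $f^{(0)}_m(t) = P_t f^0_m$) and
\begin{align*}
f^{(j+1)}_m(t) = e^{-\int_0^t \ell_m^R(f^{(j)}(s))\,ds}\Big(\cdots\Big)
\end{align*}
— more precisely, solve the linear inhomogeneous problem with the loss term treated as a time-dependent nonnegative potential, whose associated propagator is again positivity-preserving (being a product/limit of multiplication by $e^{-\int \ell}\ge 0$ and applications of $P_t$). By induction, if $f^{(j)}\in L^2_+$ on a small interval, then $\ell_m^R(f^{(j)})\ge 0$, the gain term $Q^{R,+}_m(f^{(j)},f^{(j)})\ge 0$, the propagator is positivity-preserving, and $f^{(j+1)}\in L^2_+$. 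Since the iteration converges in $X_T = L^\infty([0,T],L^2_M)$ to the unique fixed point of the contraction $\Gamma$ (the fixed point being the same whether one writes $\Gamma$ in the form used in Theorem \ref{thm:approx-plm} or in this split form, by uniqueness), and $L^2_+$ is closed in $L^2$, the limit is nonnegative on $[0,T]$.

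Finally I would propagate this to the maximal interval: the local existence time $T$ in the contraction argument depends only on $\|f^0\|_{L^2_M}$ and on $R$, so starting from any $t_0<T(f^0)$ with $f(t_0)\in L^2_+$ one obtains $f(t)\in L^2_+$ on $[t_0, t_0+T(f(t_0)))$; covering $[0,T(f^0))$ by such intervals gives $f(t)\in L^2_+$ throughout. The main obstacle I anticipate is the positivity of the propagator associated with the loss potential together with the nonsmoothness introduced by the truncation $\chi_R$: one must be slightly careful that $\ell_m^R(f)(v) = 2\sum_n \chi_R(v)\int f_n(w)|v-w|\chi_R(w)\,dw$ is genuinely a bounded nonnegative function (which it is, since $f\in L^2$, $\chi_R$ has compact support, and $|v-w|\le 2R$ on the support, giving $\ell_m^R(f)\in L^\infty$ with norm $\lesssim R\sqrt{|B_R|}\,\|f\|_{L^2_M}$), so that multiplication by $e^{-\int_0^t\ell_m^R}$ is a bounded, manifestly positivity-preserving operator and the split Duhamel formula is well-posed in $X_T$. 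Everything else is a routine induction and a closedness argument.
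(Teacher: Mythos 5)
Your proof takes essentially the same overall strategy as the paper (positivity-preserving Picard-type iteration plus closedness of $L^2_+$), but the two arguments differ in how the loss term is absorbed, and your version has a couple of points that need more care than you allow. The paper bounds the loss term uniformly: it picks a single constant $\omega(T)$ with $\bigl|\sum_n\int|v-w|\chi_R(w)\chi_R(v)f_n(w)\,dw\bigr|\le\omega(T)$ a.e., then shifts the equation by $\omega$, defines $\overline Q(g,g):=dg+Q^R(g,g)+\omega g$ (which is $\ge0$ on $L^2_+$), and runs the contraction $\Theta(g)=P_\omega f^0+\int_0^\cdot P^\omega_{\cdot-s}\overline Q(g,g)\,ds$ with $P_\omega(t)=e^{-\omega t}P_t$. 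Because the shift is a constant, $P_\omega$ is trivially the positivity-preserving semigroup of $\mathcal L-\omega$, $\Theta$ is a contraction by the very same estimates as in Theorem~\ref{thm:approx-plm}, and the successive approximations $g_{i+1}=\Theta(g_i)$, $g_0=f^0$, converge in $X_T$ to $f$. Your version instead keeps the loss as a $v$-dependent, time-dependent potential $\ell^R_m(f)(v)$ and invokes the propagator of $\mathcal L-\ell^R_m(f^{(j)}(t,\cdot))$. This is finer but costs you two nontrivial justifications that you only gesture at: (i) positivity-preservation of that propagator is not immediate, since $\mathcal L$ does not commute with multiplication by $\ell^R_m$, so the naive factor $e^{-\int_0^t\ell^R_m}$ is not the correct propagator and you need a Trotter-splitting or maximum-principle argument to see it is still a positive operator; (ii) your modified iteration $f^{(j)}\mapsto f^{(j+1)}$ is a different map from the contraction $\Gamma$ of Theorem~\ref{thm:approx-plm}, so convergence in $X_T$ is not automatic — you must show separately that this map is contractive on a suitable ball, or that its limit exists and is a mild solution of the same equation, before invoking uniqueness to identify the limit with $f$. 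Neither obstacle is fatal, but the paper's constant-shift device eliminates both at essentially no cost, and does not require the boundedness estimate $\ell^R_m(f)\in L^\infty$ in its pointwise form, only the a.e.\ bound by $\omega(T)$. Your propagation to the maximal interval is the same as the paper's.
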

\begin{proof}
Consider $f^0\in L^2_+,$ and let $T\in [0,T^0)$ be arbitrary. Then there is a constant $\omega:=\omega(T)>0$ such that, for $t\in [0, T]$,
\begin{align*}
    \left|\sum_{n=1}^M\int |v-w|\chi_R(w)\chi_R(v)f_n(w)dw\right|\leq \sum_{n=1}^M C_R\|f_n\|_{X_T}:=\omega(T),\ a.e.\ v\in\R^d.
\end{align*}
Now, for $g\in L^2$ and $0\leq t\leq T$ set for $m=1,..,M$
$$
\overline{Q}^m_t(g):=dg_m+Q_m^R(g,g)+\omega(T)_Rg_m,
$$
so, for $t\in[0,T],\ a.e.v\in\R^d$ and $g\in L^p_+$, we have $\overline{Q}(g,g)\geq0$.\\
We consider now the evolution equation
\begin{align}\label{poseq}
    \Dot{g}+\left(\omega(T)+\mathcal{L}\right)g=\overline{Q}(g,g),\ t\in[0,T],\ g(0)=f^0.
\end{align}
We note that $f$ is a solution of such equation and that equation $(\ref{poseq})$ can be solved by the method of successive approximations exactly as in the existence theorem. So we consider
$$
\Theta(g):=P_\omega f^0+\int_0^\cdot P_{\cdot-s}^\omega (\overline{Q}(g,g) ds,\ \text{for}\ g\in X_T
$$where $P_\omega(t):=e^{-\omega t}P_t,\ t\geq0$.\\
In the same fashion as Theorem \ref{thm:approx-plm}, $\Theta$ is a contraction in a suitable ball $B_T\subset X_T$, centered in $P_\cdot f^0$, into itself. Taking $T$ small enough we can assume that also $f\in B_T$. 
Therefore, since there must be existence and uniqueness of a solution we can find a sequence $g_i$, determined by
$$
g_0=f^0,\ g_{i+1}=\Theta(g_i),\ i\in\N,
$$
such that converges to $f$ in $X_T$. This implies that $g_i\rightarrow f$ in $L^2_M$ for $t\in(0,T]$.\\
Since the semigroup $P_t$ preserves positivity, by induction we get that $g_i\in L^2_+$ for $t\in[0,T]$ and $i\in\N$. This means that also $f_t\in L^2_+$ for all $t\leq T$ since the space is closed.\\
Consider now a $\overline{T}\leq T^0$, the maximal time for which $f$ is positive on $[0,\overline{T}]$, then we claim that $\overline{T}=T^0$. Otherwise, we consider now the same equation $\eqref{poseq}$ re-scaled in time
\begin{align*}
    \Dot{g}+\left(\omega+\mathcal{L}\right)g=\overline{Q}(g,g)(t+\overline{T}),\ t\in[0,T^0-\overline{T}],\ g(0)=f_{\overline{T}},
\end{align*}
and this would lead to a contradiction on the maximality of T. Since $T^0$ was arbitrary in the maximal interval of definition we deduce that $f\in L^2_+,\ \forall t\in [0,T^0)$.
\end{proof}
We wish now to understand better the regularity of the truncated solution and the estimate on such computation. To this end we want to use the a priori estimate, but in doing so we first need to establish regularity deriving directly from the semigroup and the nonlinearity.\\
Following from Proposition \ref{ppn:reg-semigp} we can derive some regularity on $f$ and its derivatives in $L^2$ space. In particular, with $f_0\in L^2_M$, we have
$$
f\in L^\infty([0,T],H^2(\R^d))\cap W^{1,2}_{\text{loc}}([0,T]\times\R^d).
$$
This is still not enough and we want to control also the moments of the solution and to this end it is not enough to have $f_0\in L^2_m$, so we first consider an initial condition such that $f_0\in L^{1}_{p,M}:=L^1_M(\brak{v}^pdv)$ for every $p\geq0$ and also in $L^2_{+,M}$, we show that this is enough to get that the solution 
$$
f^R\in L^\infty([0,T],L^1_{p,M}\cap L^2_{+,M}(\R^d)),
$$
And this holds for every $p\geq0$.\\
\\
We want to establish a Gronwall relationship, so we integrate the function $f_m$ (we drop the $R$ for readability) for all $t>0$. We note first that following the step of Theorem \ref{thm:approx-plm} it is easy to construct the same function adding the condition that $\int |f|dv$ must be bounded.\\ With this in mind, using the consistency\cite{ConsistSem} of the operator $P_t$, we can write:
\begin{align*}
    \|f_m(t)\|_{1}&\underbrace{=}_{\text{positivity}}\int f_m(t)dv=\int P_tf^0_mdv+\int \int_0^tP_{t-s}\left(df_s+Q_m^R(f,f)_s\right) dsdv\\
    &\underbrace{\leq}_{\text{property\ of}\ P_t} \int f_m^0dv+d\int_0^t \|f_m(s)\|_1ds +T\sup_t \left(\int Q^R_m(f,f)dv\right)
\end{align*}
analogous to the proof of Lemma \ref{lem:conserved} we have $\sup_t \sum_{m=1}^M \int Q^R_m(f,f)dv\leq 0$ and thus summing on $m$ and taking the sup in $t\leq T$ we get the bound of the $L^1_M$ norm, independent on $R$.\\
\\
Establish the $L^1$ bound, we use this results to prove that, depending on $R$ this time, we have bound on  the $p$-th moment for every $p$. To this end consider:
\begin{align*}
    &\int f_m(t)\brak{v}^p dv \leq \iint K(t,w)f_m^0(e^tv-w)\brak{v}^p dwdv+\int_0^t\iint K(s,w)Q^R_m(f,f)(e^sv-w)\brak{v}^pdvdwds\\
    &\underbrace{\leq}_{e^tv-w=z}\int K(t,w)\brak{w}^pdw \int C_t f_m^0(z)\brak{z}^p dz+\int_0^t\int K(s,w)\brak{w}^pdw\int C_s Q^R_m(f,f)(z)\brak{z}^pdzds
\end{align*}
Recall that 
\begin{align*}
K(t,w):=\frac{1}{(2\pi(e^{2t}-1))^{\frac{d}{2}}}e^{-\frac{|w|^2}{2\pi(e^{2t}-1)}},
\end{align*}
and that the truncated nonlinearity has a term of the form $\chi_R(z)\chi_R(w)|z-w|$ and as such we get
\begin{align*}
    \int f_m(t)\brak{v}^p dv &\underbrace{\leq}_{\int K(t,w)\brak{w}^pdw<\infty\ \forall p}C_T\|f^0\|_{p,M}^1+C(R)\int_0^tC_s\sum_{n=1}^{m-1}\iint f_n(\varphi(v,w))f_m(w)dwdvds\\
    &\leq C_T\|f^0\|_{p,M}^1+C(R)C_T\sum_{n=1}^{m-1}\sup_t\|f_n\|_1\|f_m\|_1\\
    &\leq C_T\|f^0\|_{p,M}^1+C_T(R,\sup_t\|f\|_{M}^1,M)\sup_t\|f\|^1_{p,M}.
\end{align*}
Summing on $m=1,...,M$, taking the sup over $t\leq T$, we obtain the bound depending on the initial condition, the $L^1$ norm of the solution and the truncation $R$.\\
\\
We show now that a mild solution is indeed a weak solution for our equation and use this to enhance the regularity of the time derivative of the solution.
\begin{lemma}[$L^p$ mild solutions are weak solutions]
Consider $f\in X_T$ a mild solution for equation $(\ref{main-eq-trunc1})$ corresponding to the initial value $f_0$, then $f$ is a weak solution in the sense of Definition \ref{def:weak-sol}.
\end{lemma}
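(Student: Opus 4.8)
The plan is to read the weak identity of Definition \ref{def:weak-sol} directly off the Duhamel representation satisfied by $f$, after first upgrading the mild solution to a strong ($L^2$-classical) one by means of the smoothing properties of $(P_t)_{t\ge 0}$ recorded in Propositions \ref{ppn:grad-estim} and \ref{ppn:reg-semigp}.

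\emph{Step 1 (the source term is $L^2$-continuous).} By the fixed-point construction in Theorem \ref{thm:approx-plm} one has $f\in C([0,T],L^2_M)$ on any subinterval of the maximal interval, and the truncated nonlinearity is a bounded bilinear map on $L^2_M$ with norm controlled by a constant $C_R$ — this is exactly the estimate $\|Q^R_m(f,f)\|_2^2\le C_R(\sum_m\|f_m\|_2)^2$ used there, which holds because $\chi_R$ confines all integrations to $B_R$. Hence, componentwise, $s\mapsto g(s):=d\,f(s)+Q^R(f(s),f(s))$ belongs to $C([0,T],L^2_M)$. Moreover, under the standing assumptions of Definition \ref{def:weak-sol} ($f^0_m\in L^1_2\cap L^2$), the membership $f_m\in L^\infty_tL^1_2$ and $Q^R_m(f,f)\in L^\infty_tL^1$ required there hold: the latter is immediate since $Q^R_m(f,f)(t)\in L^2$ is supported in $B_R$, and the former follows from the moment propagation established above (the truncated analogue of Lemma \ref{lem:moments}).

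\emph{Step 2 (mild $\Rightarrow$ strong).} Applying Proposition \ref{ppn:reg-semigp} componentwise to the Cauchy problem with source $g_m$ and zero initial datum, the Duhamel term $t\mapsto\int_0^tP_{t-s}g_m(s)\,ds$ lies in $C([0,T],W^{2,2}(\R^d))\cap W^{1,2}_{\mathrm{loc}}([0,T]\times\R^d)$; combined with $P_{(\cdot)}f^0_m\in C((0,T],W^{2,2})\cap C^1((0,T],L^2_{\mathrm{loc}})$ from Proposition \ref{ppn:grad-estim}, this gives that each $f_m$ satisfies $f_m(t)\in W^{2,2}(\R^d)$ for $t>0$, that $\partial_t f_m\in L^2_{\mathrm{loc}}((0,T]\times\R^d)$, and that the pointwise-in-time identity $\partial_tf_m=\mathcal{L}f_m+d\,f_m+Q^R_m(f,f)$ holds in $L^2_{\mathrm{loc}}(\R^d)$ for a.e.\ $t\in(0,T]$, where $\mathcal{L}=\Delta+v\cdot\nabla$. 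Then, fixing $\phi\in C^\infty_c(\R^d)$ and using $\div(v\phi)=v\cdot\nabla\phi+d\phi$, two integrations by parts (legitimate since $f_m(t)\in W^{2,2}$ and $\phi$ is compactly supported) give $\brak{\mathcal{L}f_m+d\,f_m,\phi}=\brak{f_m,\Delta\phi}-\brak{f_m,v\cdot\nabla\phi}$, so that for a.e.\ $t$
\[
\tfrac{d}{dt}\brak{f_m(t),\phi}=\brak{f_m(t),\Delta\phi}-\brak{f_m(t),v\cdot\nabla\phi}+\brak{Q^R_m(f,f)(t),\phi}.
\]

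\emph{Step 3 (integrate in time and sum).} The right-hand side above is integrable on $[0,T]$ (for the nonlinear term by Step 1). Integrating over $[\epsilon,t]$, letting $\epsilon\downarrow 0$ — using strong continuity $P_\epsilon f^0_m\to f^0_m$ in $L^2$ and vanishing of the Duhamel term as $\epsilon\to0$, so that $\brak{f_m(\epsilon),\phi}\to\brak{f^0_m,\phi}$ — and finally summing over $m=1,\dots,M$, reproduces verbatim the integral identity of Definition \ref{def:weak-sol} (with $Q^R_m$ in place of $Q_m$, i.e.\ the weak formulation of the truncated problem \eqref{main-eq-trunc1}). The only genuinely delicate point, and the main obstacle, is Step 2: because $f^0\in L^2$ only, $f_m(t)$ need not lie in $W^{2,2}$ up to $t=0$, so the differential identity and the integration by parts must be carried out on $[\epsilon,T]$ and then closed by the limit $\epsilon\downarrow 0$; everything else (Fubini, integration by parts against a compactly supported test function, continuity of $t\mapsto\brak{f_m(t),\phi}$) is routine.
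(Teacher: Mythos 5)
Your argument is correct, but it follows a genuinely different route from the paper's. The paper works entirely at the level of time-integrals: it substitutes the Duhamel formula for $f_s$ directly into $\int_0^t\brak{f_s,\mathcal{L}^*\phi}\,ds$, splits this into three contributions coming from the initial datum, the linear source, and the nonlinearity, and evaluates each one by swapping the order of integration (Fubini) and using the semigroup identity $\frac{d}{ds}P_sf=\mathcal{L}P_sf=P_s\mathcal{L}f$ for $f\in D(\mathcal{L})$. After recombination the mild identity reappears and the weak formulation drops out; no strong regularity of $f$ itself is invoked and no $\epsilon\downarrow0$ limit is needed. You instead \emph{upgrade} the mild solution to a strong one on $(0,T]$ via the parabolic regularity of Propositions \ref{ppn:grad-estim}--\ref{ppn:reg-semigp}, obtain the PDE pointwise (a.e.\ in $t$), test against $\phi\in C^\infty_c$, and then close the gap at $t=0$ using strong $L^2$-continuity. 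Your route is conceptually cleaner (mild $\Rightarrow$ strong $\Rightarrow$ weak) and makes the needed regularity explicit, at the cost of the $\epsilon\downarrow0$ limiting step and of relying slightly more heavily on the $L^p$-parabolic theory from \cite{MPV}; the paper's route is more elementary in that it never needs $f_m(t)\in W^{2,2}$, only that $P_sf_0$ and $P_{s-r}g_r$ lie in $D(\mathcal{L})$ for $s>0$. One small point to make fully rigorous in your Step 2: Proposition \ref{ppn:reg-semigp} as stated gives the regularity class $C([0,T],W^{2,p})\cap W^{1,p}_{\mathrm{loc}}$ for the Duhamel term, but you should add a sentence that (as is standard for mild solutions with continuous inhomogeneity and generator $\mathcal{L}$, cf.\ \cite{MPV}) this mild solution is in fact a strong solution, i.e.\ $\partial_t u-\mathcal{L}u=g$ holds a.e.; the proposition's conclusion does not literally state this, though it is an immediate corollary.
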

\begin{proof}
Fix $T>0$ an a time $t\in[0,T]$, fix $\phi$ a test function. Consider now the weak formulation for equation $\eqref{main-eq-trunc1}$:
$$
\brak{f_t,\phi}= \brak{f_0,\phi}+\int_0^t\brak{f_s,\mathcal{L}^*\phi}ds+\int_0^t\brak{Q_s(f,f),\phi}ds,
$$
where $\mathcal{L}^*\phi=-v\cdot\nabla\phi+\Delta\phi$. Notice that
$$
\brak{f_t,\mathcal{L}^*\phi}=\brak{\mathcal{L}(f_t)+df_t,\phi}.
$$
Under this definition and observation we consider now $f$ mild solution as before
$$
f_t=P_tf_0+\int_0^tP_{t-s}f_sds+\int_0^tP_{t-s}Q_s(f,f)ds
$$
ans analyze the well defined quanity
\begin{align*}
\int_0^t\brak{f_s,\mathcal{L}^*\phi}ds&=\int_0^t\brak{P_sf_0+\int_0^sP_{s-r}f_rdr+\int_0^sP_{s-r}Q_r(f,f)dr,\mathcal{L}^*\phi}ds\\
&=\underbrace{\int_0^t\brak{P_sf_0,\mathcal{L}^*\phi}ds}_{A}+\underbrace{\int_0^t\brak{\int_0^sP_{s-r}f_rdr,\mathcal{L}^*\phi}ds}_{B}+\underbrace{\int_0^t\brak{\int_0^sP_{s-r}Q_r(f,f)dr,\mathcal{L}^*\phi}ds}_{C}.
\end{align*}
Now we analyze, using the regularity result on $f$ derived in the previous sections and using \cite{MPV}, each of the component on the right hand side.\\
\textbf{A:} 
\begin{align*}
    \int_0^t\brak{P_sf_0,\mathcal{L}^*\phi}ds&=\int_0^t\brak{\mathcal{L}P_sf_0,\phi}ds+\int_0^t\brak{P_sf_0,\phi}ds\\
    &=\brak{\int_0^t\frac{d}{ds}P_sf_0ds,\phi}+\int_0^t\brak{P_sf_0,\phi}ds\\
    &=\brak{P_tf_0,\phi}-\brak{f_0,\phi}+\int_0^t\brak{P_sf_0,\phi}ds
\end{align*}
\textbf{B:} 
\begin{align*}
\int_0^t\brak{\int_0^sP_{s-r}f_rdr,\mathcal{L}^*\phi}ds&=
\int_0^t\brak{\int_0^s\mathcal{L}P_{s-r}f_rdr,\phi}ds+
\int_0^t\brak{\int_0^sP_{s-r}f_rdr,\phi}ds.\\
&=\int_0^t\brak{\int_r^t\mathcal{L}P_{s-r}f_rds,\phi}dr+
\int_0^t\brak{\int_0^sP_{s-r}f_rdr,\phi}ds\\
&=\int_0^t\brak{\int_r^t\frac{d}{ds}P_{s-r}f_rds,\phi}dr+
\int_0^t\brak{\int_0^sP_{s-r}f_rdr,\phi}ds\\
&=\int_0^t\brak{P_{t-r}f_r,\phi}dr-\int_0^t\brak{f_r,\phi}dr+
\int_0^t\brak{\int_0^sP_{s-r}f_rdr,\phi}ds\\
\end{align*}
\textbf{C:} 
\begin{align*}
\int_0^t\brak{\int_0^sP_{s-r}Q_r(f,f)dr,\mathcal{L}^*\phi}ds&=\int_0^t\brak{\int_0^s\mathcal{L}P_{s-r}Q_r(f,f)dr,\phi}ds+\int_0^t\brak{\int_0^sP_{s-r}Q_r(f,f)dr,\phi}ds\\
&=\int_0^t\brak{\int_0^s\mathcal{L}P_{s-r}Q_r(f,f)dr,\phi}ds+\int_0^t\brak{\int_0^sP_{s-r}Q_r(f,f)dr,\phi}ds\\
&=\int_0^t\brak{\int_r^t\mathcal{L}P_{s-r}Q_r(f,f)ds,\phi}dr+\int_0^t\brak{\int_0^sP_{s-r}Q_r(f,f)dr,\phi}ds\\
&=\int_0^t\brak{P_{t-r}Q_r(f,f)-Q_r(f,f),\phi}dr+\int_0^t\brak{\int_0^sP_{s-r}Q_r(f,f)dr,\phi}ds.
\end{align*}
To prove all the following equality we have used the fact that, if $f\in D(\mathcal{L})$, then $P_tf\in D(\mathcal{L})$ and in that case 
$$
\frac{d}{dt}P_tf=\mathcal{L}P_tf=P_t\mathcal{L}f.
$$
Thanks to the reguliraty on $f$ we can apply this result, thus putting all together we get:
\begin{align*}
 \int_0^t\brak{f_s,\mathcal{L}^*\phi}ds&=\brak{P_tf_0,\phi}-\brak{f_0,\phi}+\int_0^t\brak{P_sf_0,\phi}ds\\
 &+\int_0^t\brak{P_{t-r}f_r,\phi}dr-\int_0^t\brak{f_r,\phi}dr+
\int_0^t\brak{\int_0^sP_{s-r}f_rdr,\phi}ds\\
&+\int_0^t\brak{P_{t-r}Q_r(f,f),\phi}dr-\int_0^t\brak{Q_r(f,f),\phi}dr+\int_0^t\brak{\int_0^sP_{s-r}Q_r(f,f)dr,\phi}ds.
\end{align*}
For which we derive, using the equality of the mild solution:
\begin{align*}
 \brak{f_0,\phi}&+\int_0^t\brak{f_s,\mathcal{L}^*\phi}ds+\int_0^t\brak{Q_r(f,f),\phi}dr=\underbrace{\brak{P_tf_0,\phi}+\int_0^t\brak{P_{t-r}f_r,\phi}dr+\int_0^t\brak{P_{t-r}Q_r(f,f),\phi}dr}_{:=\brak{P_tf_o+\int_0^tP_{t-r}f_rdr+\int_0^tP_{t-r}Q_r(f,f)dr,\phi}=\brak{f_t,\phi}}\\
 &+\underbrace{\int_0^t\brak{P_sf_0,\phi}ds +\int_0^t\brak{\int_0^sP_{s-r}f_rdr,\phi}ds+\int_0^t\brak{\int_0^sP_{s-r}Q_r(f,f)dr,\phi}ds}_{:=\int_0^t\brak{f_s,\phi}ds}-\int_0^t\brak{f_s,\phi}ds.
\end{align*}
Thus $\brak{f_0,\phi}+\int_0^t\brak{f_s,\mathcal{L}^*\phi}ds+\int_0^t\brak{Q_r(f,f),\phi}dr=\brak{f_t,\phi}$, concluding the proof.
\end{proof}
To conclude we need to extend the regularity on the time derivative and prove that
$$
\partial_t f_m\in L^2([0,T]\times\R^d).
$$
Analyzing the equation solved by $f^R$, proving that $\partial_t f^R_m$ is in $L^2([0,T]\times \R^d)$ boils down to prove that the quantity
$$
v\cdot \nabla f_m\in L^2([0,T]\times\R^d)\text{  i.e. }\ \int_0^T \int |v\cdot \nabla f_m(s)|^2ds<\infty.
$$
In order to to this we use the mild formulation of $f$, the explicit formulation of the nonlinearity $Q^R_m$ and the bound we have previously derived. We reduce again the problem, using Cauchy-Schwartz and the inequality on the weight to prove that
$$
\int_0^t \int|\nabla f_m|^2 \brak{v}^2dvds <\infty.
$$
We consider:
\begin{align*}
    \int_0^t \int|\nabla f_m|^2 \brak{v}^2dvds&\leq \underbrace{\int_0^t\int|\nabla P_tf_0|^2 \brak{v}^2dsdv}_{A}+\underbrace{\int_0^t \int|\nabla \int_0^s P_{s-r}Q_m^R(f,f)dr|^2 \brak{v}^2dvds}_{B}
\end{align*}
we work separately on the two term:
\begin{align*}
    A&:=\int_0^t\int|\nabla \int K(t,w)f_0(e^tv-w)dw|^2\brak{v}^2dvds\\
    &\leq e^Tc_d\int_0^t\int\int K(t,w)|\nabla f_0(e^tv-w)|^2\brak{v}^2dwdvds\\
    &\leq e^Tc_d\int_0^t\int K(t,w)\int|\nabla f_0(e^tv-w)|^2\brak{v}^2dvdwds\\
      &\lesssim  \int_0^t\int K(t,w)\int|\nabla f_0(e^tv-w)|^2(\brak{e^tv}^2+\brak{w}^2)dvdwds\\
    &\lesssim \|\nabla f_0\|_{2,2}+\|\nabla f_0\|_2.
\end{align*}
while for the other terms, we use intensively the form of the nonlinearity $Q^R$. In particular we use the bound obtained with $\chi_R(v)$ and $\chi_R(w)$ to control the weight and conclude.
\begin{align*}
    &B\lesssim\int_0^t \int_0^s \underbrace{\left(\int |\nabla P_{s-r}Q_m^R(f,f)|^2 \brak{v}^2dv\right)}_{C}drds
\end{align*}
and we compute directly on the term $C$.
\begin{align*}
    \left(\int |\nabla P_{s-r}Q_m^R(f,f)|^2 \brak{v}^2dv\right)&\leq e^T\int |P_{s-r}\nabla Q_m^R(f,f)|^2 \brak{v}^2dv\\
    &=e^T\int |P_{s-r}(\left(Q_m^R(\nabla f,f)+Q_m^R(f,\nabla f)\right)|^2 \brak{v}^2dv\\
    &\lesssim \int P_{s-r}|(\left(Q_m^R(\nabla f,f)+Q_m^R(f,\nabla f)\right)|^2 \brak{v}^2dv
\end{align*}
Since the two addenda of the sum are quite similar we focus only on one of them. In particular we focus on each single term one for the "positive" part, the negative part being easier.
\begin{align*}
    &\int \int K(s-r,w)\left|\int \nabla f_n(h(v,w,z) f_{m-n}(z)|z|\chi_R(e^tv-w)\chi_R(z)dz\right|^2dw\brak{v}^2dv\\
    &\leq \int \int K(s-r,w)\left|\int \nabla f_n(h(v,w,z) f_{m-n}(z)|z|\chi_R(e^tv-w)\chi_R(z)dz\right|^2dw\brak{v}^2dv\\
    &\leq C_R^2 \|f\|_1\int \int K(s-r,w)\int |\nabla f_n(h(v,w,z)|^2 f_{m-n}(z)dzdw\brak{v}^2dv\\
    &\underbrace{\lesssim}_{\text{change\ variables}}C_R^2 \|f\|_1\int \int K(s-r,w)\int |\nabla f_n(\gamma)|^2 f_{m-n}(z)\brak{\gamma}^2\brak{w}^2\brak{z}^2d\gamma dzdw\\
    &\lesssim e^T \|\nabla f_n(s)\|_{2,2}^2\|\|f_n\|_{1,2}^2\|_{\infty,T}.
\end{align*}
Putting this in $(B)$ we get
$$
\int_0^t \int_0^s\left(\int |\nabla P_{s-r}Q_m^R(f,f)|^2 \brak{v}^2dv\right)drds\leq C(R,M,T,\|f\|_1,\|f\|_{1,2})\int_0^t \int_0^s\|\nabla f(s)\|_{2,2}^2ds\ dt.
$$
Summing on $M$ we get:
$$
\sum_{m}\int_0^t \int|\nabla f_m|^2 \brak{v}^2dvds \leq C(f_0)+C(R,M,T,\|f\|_1,\|f\|_{1,2})\int_0^t \int_0^s\|\nabla f(s)\|_{2,2}^2ds\ dt
$$
Assuming that $f_0$ is in $H^1_1\cap L^1_2$ we have proven:
$$
f_m\in H^1([0,T]\times\R^d), \forall m=1,...,M.
$$
This extended regularity for the truncated solution $f^R$ make it possible to prove rigorously the a priori estimate for the solutions of the approximated problems, obtaining bound that are independent of $R$.
\begin{corollary}
In the same hypothesis of the last few theorems, the a priori estimate are true for the solution of the approximating problems, with constants independent from $R$, as soon as $f^0$ is regular enough.
\end{corollary}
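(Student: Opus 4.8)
The plan is to read this corollary as the moment where the standing ``regular enough'' hypothesis — under which Lemmas~\ref{lem:conserved}, \ref{lem:moments}, \ref{lem:H_1}, \ref{lem:H_1-weighted} and \ref{lem:H^n_k}, and their truncated restatements in Section~\ref{sec:approx-plm}, were proved — is finally discharged for the solutions $f^R$ produced in Theorem~\ref{thm:approx-plm}. Recall that in those lemmas the hypothesis is used only to make the boundary terms vanish in the integrations by parts, and that the assertion ``the constant does not depend on $R$'' is already part of the truncated statements: it rests on the observation, recorded in Section~\ref{sec:approx-plm}, that $0\le\chi_R\le1$ together with $f^R_m\ge0$ forces the truncated gain term to be dominated pointwise in $v$ by the untruncated one,
\[
\chi_R(v)\int f^R_n\big(\varphi(v,w)\big)f^R_{m-n}(w)\,|v-w|\,\chi_R(w)\,dw\ \le\ \int f^R_n\big(\varphi(v,w)\big)f^R_{m-n}(w)\,|v-w|\,dw ,
\]
so that the cancellations and the Gronwall inequalities close exactly as in the full problem. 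Hence the only thing left to do is to exhibit the regularity and decay of $f^R$.

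To that end I would fix $f^0=(f^0_m)_{m=1}^M$ nonnegative with $f^0_m\in H^1_1\cap L^1_2$ and $f^0_m\in L^1_p(\R^d)$ for every $p\ge0$ (e.g.\ each $f^0_m$ Schwartz), and collect what has already been established for $f^R$ in this section: by Theorem~\ref{thm:approx-plm} and the subsequent positivity result, $f^R$ is a nonnegative $L^2$-mild solution of \eqref{main-eq-trunc1}, moreover weak in the sense of Definition~\ref{def:weak-sol}, with $f^R\in L^\infty([0,T];L^2)^{\otimes M}\cap W^{1,2}_{\mathrm{loc}}((0,T);H^2)^{\otimes M}$; the $L^1$- and moment arguments give $f^R\in L^\infty([0,T];L^1_{p,M})$ for every $p\ge0$ (with a constant that may depend on $R$ — irrelevant here, only finiteness matters); and the $H^1$-argument gives $v\cdot\nabla f^R_m,\ \partial_t f^R_m\in L^2([0,T]\times\R^d)$. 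I would then feed this back into the mild formula and iterate the smoothing estimates of Propositions~\ref{ppn:grad-estim} and~\ref{ppn:reg-semigp} — $P_{t-s}$ regularises the merely bounded, compactly supported source $d f^R+Q^R(f^R,f^R)$ — together with the Leibniz identity of Proposition~\ref{ppn:lebeniz} for the nonlinearity, to conclude that for every $t\in(0,T]$ and every $m$ the function $f^R_m(t,\cdot)$, and each of its $v$-derivatives, decays at infinity faster than any negative power of $|v|$. In particular all the decay conditions invoked in the a priori lemmas — statements of the form $|v|^{2k+1}|f^R_m|\to0$, $|v|^{2k}|\nabla f^R_m|\to0$, $|v|^{2k}|f^R_m\nabla f^R_m|\to0$, or $\brak{v}$-weighted decay of $\partial^j f^R_m$ — are satisfied.

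With that in force the computations in Lemmas~\ref{lem:moments}, \ref{lem:H_1}, \ref{lem:H_1-weighted} and \ref{lem:H^n_k} go through verbatim for $f^R$, all boundary terms vanishing, and, as recalled in the first paragraph, every resulting constant is the one already obtained for the untruncated equation, hence independent of $R$; this is precisely the assertion of the corollary. I expect the bootstrap of the second paragraph to be the main obstacle: it is where the assumption that $f^0_m$ has finite moments of all orders and the $H^1$-regularity of $f^R$ are genuinely used, and some care is needed because $\chi_R$ is not smooth, so the $v$-regularity of $f^R$ must be drawn from the smoothing of the semigroup rather than read off the equation directly. Everything beyond that is bookkeeping.
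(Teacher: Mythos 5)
Your proposal is correct and follows the same logic as the paper: the paper's own two-sentence proof simply invokes the $L^1_k$ bounds and the $H^1([0,T]\times\R^d)$ regularity of $f^R$ — both established in the long preparatory passage just before the corollary via the mild formula and the semigroup smoothing of Propositions~\ref{ppn:grad-estim}--\ref{ppn:reg-semigp} — together with the pointwise domination $Q^R_m\le Q_m$ recorded at the start of Section~\ref{sec:approx-plm} to give $R$-independent constants. The only presentational difference is that you condense the bootstrap to rapid decay into an explicit iteration inside the proof, whereas the paper spreads that work over the preceding paragraphs and then cites it, so nothing genuinely new is at stake.
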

\begin{proof}
Under the hypothesis, and thanks to the previous lemmas we satisfy the requirement of the a priori estimate and thus conclude the proof. In fact using the established $L^1_k$ bounds and the fact that $f\in H^1([0,T]\times\R^d)$ we can make rigorous the arguments of the a priori estimates lemmas and then proceed to obtain the regularity 
$$
f^R\in L^\infty([0,T],H^p_k(\R^d)),\ \forall n\in\N, \forall k\geq0,
$$
with bound independent of $R$.
\end{proof}
We conclude this section with the following:
\begin{theorem}[global existence]
Assume that the initial condition is positive and in $L^2_M$ such that Lemma \ref{lem:H_1-trunc} and Theorem \ref{thm:approx-plm} holds, then the solution $\tilde{f}$ of the approximated problem is global in time, i.e. $T(f^0)=\infty$.
\end{theorem}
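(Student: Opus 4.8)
The plan is to deduce global existence from the blow-up alternative in Theorem~\ref{thm:approx-plm} together with the $R$-uniform a priori $L^2$ bound of Lemma~\ref{lem:H_1-trunc}, which holds on every finite time interval. Recall that Theorem~\ref{thm:approx-plm} asserts that either $T(f^0)=\infty$ or else
\[
\sup_{T(f^0)/2<t<T(f^0)}\|f_t\|_{L^2_M}=\infty .
\]
So it suffices to rule out the second alternative, and I would do this by contradiction: assuming $T^0:=T(f^0)<\infty$, I will show that $\sup_{t\in[0,T^0)}\sum_{m=1}^M\|\tilde f_m(t)\|_{L^2}^2<\infty$.

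First I would fix an arbitrary $T\in(0,T^0)$ and check that the maximal mild solution $\tilde f$ restricted to $[0,T]$ meets the standing hypotheses under which Lemma~\ref{lem:H_1-trunc} was proved. Nonnegativity holds by the positivity theorem above; the required smoothness and fast decay at infinity follow from the regularity already established for the truncated problem, namely smoothing by the semigroup $(P_t)$ on $(0,T]$ via Propositions~\ref{ppn:grad-estim} and~\ref{ppn:reg-semigp} (so that $\tilde f\in L^\infty([0,T],H^2(\R^d)^{\otimes M})\cap W^{1,2}_{\mathrm{loc}}$), the identification of mild solutions with weak solutions, the propagated moment bounds $\tilde f\in L^\infty([0,T],L^1_{p,M})$ for every $p\ge 0$, and the bound $\tilde f_m\in H^1([0,T]\times\R^d)$. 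With these in hand the manipulations behind Lemma~\ref{lem:H_1-trunc} (integration by parts, Young's inequality, the cancellation in the loss term) are rigorous for $\tilde f$, and we obtain a finite constant $C=C(T,M,f^0)$, \emph{independent of $R$}, with
\[
\sup_{t\in[0,T]}\sum_{m=1}^M\int|\tilde f_m(t,v)|^2\,dv\le C(T,M,f^0).
\]

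Next I would let $T\uparrow T^0$. Inspecting the Gronwall step in the proof of Lemma~\ref{lem:H_1} (hence of Lemma~\ref{lem:H_1-trunc}), the constant is of the form $C(T)=C_l\,e^{TC'}$, which is nondecreasing and finite for every finite $T$; since $T^0<\infty$ by assumption, this gives
\[
\limsup_{t\uparrow T^0}\sum_{m=1}^M\|\tilde f_m(t)\|_{L^2}\le \sqrt{C(T^0,M,f^0)}<\infty ,
\]
which flatly contradicts $\sup_{T^0/2<t<T^0}\|f_t\|_{L^2_M}=\infty$ from Theorem~\ref{thm:approx-plm}. Hence $T^0<\infty$ is impossible, so $T(f^0)=\infty$ and the approximate solution is global.

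The step I expect to be the main obstacle is not the contradiction itself, which is immediate, but the justification that the a priori $L^2$ estimate is genuinely available on the \emph{open} interval $[0,T^0)$: one must invoke the full regularity chain proved earlier (semigroup smoothing, the equivalence of mild and weak solutions, the $R$-independent $L^1_k$ bounds, and $H^1$ regularity in space-time) to ensure that the computations in Lemma~\ref{lem:H_1-trunc} apply to $\tilde f$ on each $[0,T]\subset[0,T^0)$ rather than to a merely hypothetical smooth solution. Once this is granted, the same argument, run with Lemma~\ref{lem:H_1-trunc} for all $k$ and Lemma~\ref{lem:H^n_k}, additionally yields that every weighted Sobolev a priori bound holds globally in time with constants independent of $R$ — precisely what is needed for the passage to the limit $R\to\infty$ in the next section.
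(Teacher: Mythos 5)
Your proof is correct and follows essentially the same route as the paper's: combine the blow-up alternative from Theorem~\ref{thm:approx-plm} with the $R$-uniform $L^2$ bound of Lemma~\ref{lem:H_1-trunc} to contradict finiteness of the maximal time. You spell out more explicitly than the paper both the regularity chain that makes the a priori estimate applicable on $[0,T^0)$ and the fact that the Gronwall constant $C(T)=C_l e^{TC'}$ stays finite as $T\uparrow T^0$, but the core argument is the same.
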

\begin{proof}
Thanks to Theorem \ref{thm:approx-plm} we know that the maximal interval of the solution $\tilde{f}$ is open in $R^+$ and of the form $[0,T^0)$.\\
Under the hypothesis of regularity of the initial condition, and thanks to Lemma \ref{lem:H_1-trunc}, we have that
$$
\|\tilde{f}_t\|_{L^2_M}\leq C(T),\ a.e.\ t\in[0,T^0)\cap[0,T].
$$
Thus, since being $T^0$ finite means that $\sup_{t\in[T^0-\epsilon,T^0)}\|\tilde{f}_t\|_{L^2_M}=\infty$, this cannot be happening and this implies that $T^0=\infty$ and the solution is global.\\
This implies that, for each $T>0$, independently of $R$, under the regularity of initial condition, we have a weak solution of equation $\eqref{main-eq-trunc1}$ on $X_T$.
\end{proof}

\section{Existence and uniqueness for the full System}
Right now we have proven that, fixing $T$ finite, there exists a sequence of function $(f^R)_R$, solutions of $\eqref{main-eq-trunc1}$.\\
Under suitable initial condition $(f^0_m)_{m=1}^M$, we prove now the weak convergence of $f^R$ to a map $f$ in $L^2\cap L^1$ such that solves in the weak sense $\eqref{main-eq1}$.\\
To prove existence for the full problem we begin by recalling the definition of weak solution we are going to use:
\begin{definition}
Fix $T>0$, and $f^0_m\in L^1_2\cap L^2$. A solution for equation $(\ref{main-eq1})$ is a set of functions $f_m\in L^\infty(0,T;L_2^1(\R^d))$ indexed by the masses $m=1,...,M$ such that $\forall m,\ Q_m(f,f)\in L^\infty(0,T;L^1(\R^d))$ and the following holds:
\begin{align*}
    \sum_{m=1}^M\brak{f_m(t),\phi}-\sum_{m=1}^M\brak{f_m^0,\phi}&=\int_0^t \sum_{m=1}^M\brak{f_m(s),\Delta\phi}ds-\int_0^t \sum_{m=1}^M\brak{f_m(s),v\cdot\nabla\phi}ds\\
    &+ \int_0^t \sum_{m=1}^M\brak{Q_m(f(s),f(s)),\phi}ds,
\end{align*}
for a.e. $t\in [0,T]$, and $\forall\phi\in C^\infty_c(\R^d)$. \end{definition}
We note that if a weak solution is sufficiently smooth and satisfy suitable decay for large $|v|$, then it is also a classical solution.
With everything recalled we state the following.
\begin{theorem}
For every $T>0,M>0,\ f_0\geq0$ such that $f_0\in L^1_{2,M}\cap L^2_M$ there exists at least one nonnegative weak solution $f\in L^\infty([0,T],L^1_2(\R^d))$ with initial condition $f(0)=f_0$.\\
If instead $f_0\in L^1_{2,M}\cap H^1_{1,M}$, then for every $t_0>0$ we have $f\in C^1_b([t_0,T],C^{\infty}(\R^d))$ with rapid decay for large $|v|$.
\end{theorem}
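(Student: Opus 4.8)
The plan is to produce $f=(f_m)_{m=1}^M$ as a weak limit, along a subsequence $R_j\to\infty$, of the solutions $f^R=(f^R_m)_{m=1}^M$ of the truncated systems \eqref{main-eq-trunc1}, which by Theorem \ref{thm:approx-plm}, the global existence theorem and the Corollary at the end of Section \ref{sec:exist-uniq} are global, nonnegative, regular, and obey all the a priori bounds of Sections \ref{sec:apriori-est}--\ref{sec:approx-plm} with constants independent of $R$. I will use in particular $\sup_{[0,T]}\|f^R\|_{1,M,2}\le C$ (mass decay and the second moment bound) and $\sup_{[0,T]}\|f^R\|_{2,M}^2+\int_0^T\|\nabla f^R\|_{2,M}^2\,ds\le C$ (Lemma \ref{lem:H_1-trunc}).

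\emph{Compactness.} By the argument in the proof of Lemma \ref{lem:aubin}, with $L^1(\R^d,|v|^2dv)=L^1_2$ in place of $L^2(\R^d,|v|^2dv)$ (the uniform smallness of the $L^2$-tails now following by interpolating $L^1_2$ with $H^1$), one has the compact embedding $H^1(\R^d)\cap L^1_2(\R^d)\hookrightarrow\hookrightarrow L^2(\R^d)$. Now $f^R$ is bounded in $L^2(0,T;H^1\cap L^1_2)$ by the bounds above, while from $\partial_tf^R_m=\Delta f^R_m+\div(vf^R_m)+Q^R_m(f^R,f^R)$ — with $\Delta f^R_m\in L^2(0,T;H^{-1})$, $vf^R_m\in L^\infty(0,T;L^1)$ and $Q^R_m(f^R,f^R)\in L^\infty(0,T;L^1)$ by Proposition \ref{sobolev} (whose estimate is insensitive to the cut-off $\chi_R$) — one gets $\partial_tf^R_m$ bounded in $L^2(0,T;H^{-k})$ for $k>d/2+1$, uniformly in $R$. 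The weighted Aubin--Lions Lemma \ref{lem:aubin} then yields a subsequence, not relabelled, with $f^R_m\to f_m$ strongly in $L^2([0,T]\times\R^d)$ and a.e., for each $m$; hence $f_m\ge0$, and by Fatou together with the $L^1_{2,M}$ bound, $f\in L^\infty(0,T;L^1_{2,M})$.

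\emph{Passing to the limit.} Each $f^R$ satisfies the weak formulation of Definition \ref{def:weak-sol} with $Q^R_m$ in place of $Q_m$ (established in Section \ref{sec:exist-uniq}). Fix $\phi\in C^\infty_c(\R^d)$ with $\supp\phi\subset B_\rho$, and take $R>\rho$, so that $\chi_R\equiv1$ on $\supp\phi$. The linear terms pass to the limit by the strong $L^2$ convergence. For the loss part of $\langle Q^R_m(f^R,f^R),\phi\rangle$, namely
$$
-2\sum_{n=1}^M\int_0^t\!\iint f^R_m(s,v)\,f^R_n(s,w)\,|v-w|\,\chi_R(w)\,\phi(v)\,dw\,dv\,ds,
$$
split the $w$-integral at $|w|=K$: the tail is bounded by $C\|\phi\|_\infty\sup_s\!\int_{B_\rho}f^R_m(s,v)\,dv\cdot\sup_s\!\int_{|w|>K}(\rho+|w|)f^R_n(s,w)\,dw\le CK^{-1}$ by the second moment bound, uniformly in $R$, while on $\{|w|\le K\}$ the integrand is dominated and $f^R_mf^R_n\to f_mf_n$ in $L^1_{\mathrm{loc}}$, so that piece converges and the truncation tends to $1$. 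The gain part is handled the same way after the change of variables $w'=\varphi(v,w)$ used in Section \ref{sec:apriori-est}. Hence $f$ solves the weak formulation; $Q_m(f,f)\in L^\infty(0,T;L^1)$ follows from Proposition \ref{sobolev} with $p=1$, $k=0$ and $f\in L^\infty(0,T;L^1_{1,M})$, and $f(0)=f^0$ because $f^R(0)=f^0$ for all $R$ and the identity is an integral equation in $t$. This proves the first assertion. I expect this passage to the limit in $Q^R_m$ to be the main obstacle: the kernel $|v-w|$ is unbounded and the truncation must be removed at the same time, so weak convergence of the $f^R$ is not enough — one genuinely needs the strong local convergence together with the uniform moment control of the velocity tails, which is also why the a priori bounds are taken for the sum over $m$.

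\emph{Regularity.} Assume now $f^0\in L^1_{2,M}\cap H^1_{1,M}$. Then, by the semigroup arguments of Section \ref{sec:exist-uniq}, the solutions $f^R$ are smooth with rapid decay and, for every $t_0>0$ and all $n,k\in\N$, obey $\sup_{[t_0,T]}\sum_{m=1}^M\|f^R_m\|_{H^n_k}\le C(n,k,t_0)$ with $C$ independent of $R$ (this is where the a priori estimates of Section \ref{sec:apriori-est}, in particular Lemma \ref{lem:H^n_k}, are made rigorous and uniform in $R$, using parabolic smoothing to place $f^R(t_0)$ in every $H^n_k$ and the propagation of moments). Passing to the limit weakly-$*$ in $L^\infty(t_0,T;H^n_k)$ gives $f\in L^\infty(t_0,T;H^n_k)$ for all $n,k$, so by weighted Sobolev embedding each $f_m(t)$ is a Schwartz function for $t\ge t_0$; in particular $f_m(t)\in C^\infty_b(\R^d)$ with faster-than-polynomial decay. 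Finally, differentiating the equation and using these bounds gives $\partial_t^jf_m\in L^\infty(t_0,T;H^n_k)$ for all $j,n,k$, whence $f_m\in C^\infty([t_0,T];C^\infty_b(\R^d))$ with rapid decay; in particular $f\in C^1_b([t_0,T];C^\infty(\R^d))$, and since $t_0>0$ is arbitrary this is the second assertion.
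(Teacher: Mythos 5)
Your proof is correct and follows the same overall strategy as the paper --- producing $f$ as a limit of the truncated solutions $f^R$ via a weighted Aubin--Lions compactness argument --- but two of your choices diverge from the paper's, and both streamline it. First, you replace the paper's compact embedding of $H^1(\R^d)\cap L^2(\R^d,|v|^2dv)$ into $L^2(\R^d)$ by that of $H^1(\R^d)\cap L^1_2(\R^d)$, obtaining uniform smallness of the $L^2$-tails by interpolating $L^1_2$ against $H^1$. This is cheaper: the paper's version needs a uniform-in-$R$ bound on $\sup_t\|f^R\|_{2,M,1}$ (Lemma \ref{lem:H_1-weighted}), which costs higher initial moments, and is exactly why the paper's proof has a second stage --- it first treats $f_0\in C_c^\infty$, where all the $H^n_k$ estimates are rigorous and uniform in $R$, and only then passes to general $f_0\in L^1_2\cap L^2$ by a density argument plus a weak $L^1$-limit. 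Your variant only needs $\sup_t\|f^R\|_{1,M,2}$, available under the stated hypothesis, so the density step disappears. Second, you spell out the passage to the limit in $Q^R_m$ (tail-splitting at $|w|=K$ with the second-moment bound, dominated convergence on compacts, $\chi_R\to1$), which the paper merely asserts; this is the delicate step and your detail is a genuine improvement. One point to tighten: you invoke the Corollary at the end of Section \ref{sec:exist-uniq} for all a priori bounds independent of $R$, but that Corollary requires $f^0$ ``regular enough'' (in context $H^1_1\cap L^1_2$). For the existence claim you only use the $L^1_2$ moment bound and the $L^\infty_tL^2_v\cap L^2_tH^1_v$ energy bound; these do hold for $f_0\in L^1_{2,M}\cap L^2_M$ once one invokes the parabolic smoothing of the semigroup for $t>0$ to justify the integrations by parts, but you should say so explicitly rather than appeal to the Corollary, since this is precisely what the paper's two-stage approximation was designed to avoid.
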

We prove here the existence, postponing the uniqueness to the next section.
\begin{proof}(Existence) We start with an initial condition $f_0\in C^\infty_c(\R^d)$. Using the results obtained in the previous section we have establish the $L^1_k$ bounds and the fact that the sequence $f_R\in H^1([0,T]\times\R^d)$. With this we can make rigorous the a priori estimates and get $f_R\in L^\infty([0,T],H^n_k(\R^d)),\ \forall n\in\N,k\geq0$, with bounds independent from $R$.\\ 
$f^R\in L^\infty([0,T],L^2)$, and thanks to the a priori estimate we have that, independently of $R$, $f^R_t\in H^1(\R^d)\cap L^2(\R^d,|v|^2dv)$ and the sequence is equibounded. Analogously we have that, thanks to the regularity of the semi group $P_t$, the solution $f^R$ lies in the space $H^1(0,T;H^{-1})$.\\
Using now the Weighted Aubin-Lions Lemma \ref{lem:aubin} we have that, up to subsequence:
\begin{enumerate}
    \item $f^R\rightharpoonup f$ in $H^1$;
    \item $f^R\rightarrow f$ in $L^2$;
    \item up to subsequence $f^R\rightarrow f$ pointwise a.e.,
\end{enumerate}
and the a priori estimate pass to the limit function $f$.\\
This is enough to allow us to pass to the limit as $R\rightarrow\infty$ in the weak formulation of the system and show that the limit solutions satisfy the equation with the full kernel.\\
Finally we consider $f_0\in L^1_2\cap L^2$ and we take a sequence $f^n_0\in C^\infty_0$ converging to $f_0$ in $L^1_2\cap L^2$, obtaining the bound in $L^1_2$ by Fatou's lemma. Then, since the constants in the moments and energy are independent on $n$ we can pass to the weak $L^1-$limit in the equations obtaining a solution $f$ as expected.\\
To study the regularity of the solution with data in $H^1_1$ we consider that this is enough to get the parabolic regularity $f\in H^1([0,T]\times\R^d)$ for any $T>0$ and using the fact that we have the bound $f\in L^\infty([0,T],L^1_k(\R^d))$ we can make rigorous the argument of the a priori estimates and proceed to obtain the desired regularity.
\end{proof}
\subsection{Weak Uniqueness}
Consider now two weak solutions $f,g$ of equation $(\ref{main-eq1})$ in the sense of Definition \ref{def:weak-sol}, with initial conditions $f^0_m=g^0_m\in H^1_1\cap L^1_2$ and nonnegative for every $m$. We know they are actually classical solutions. We want to show that, as soon as they start with the same initial, regular condition, then the two solutions must be equal a.e. in $v\in\R^d$ for every fixed time $T\geq0$. \\
Consider for this the set of equation solved by $h_m(t,v):=f_m(t,v)-g_m(t,v),\ \forall m=1,...,M$ and call $H_m(t,v):=f_m(t,v)+g_m(t,v)$.
\begin{align}\label{eq-for-h_m}
    &\begin{cases}
    \partial_t h_m(t,v)=\Delta h_m +\text{div}(vh_m)+\frac{1}{2}\left(Q_m(h,H)+Q_m(H,h)\right)\\[5pt]
    h_m|_{t=0}=0
    \end{cases}
    \end{align}
    where we denote
    \begin{align*}
    Q_m(h,H)&:=\sum_{n=1}^{m-1}\int\left(\frac{m}{n}\right)^2 h_n(\varphi(v,w))H_{m-n}(w)|v-w|dw -2\sum_{m=1}^M h_m(v)\int H_n(w)|v-w|dw,\\
    Q_m(H,h)&:=\sum_{n=1}^{m-1}\int\left(\frac{m}{n}\right)^2  H_n(\varphi(v,w))h_{m-n}(w)|v-w|dw -2\sum_{m=1}^M H_m(v)\int h_n(w)|v-w|dw.
\end{align*}

We consider now the function $\psi_\varepsilon(x)$ an approximation of the function $\text{sgn}(x)$ such that 
\begin{align*}
    &\psi_\varepsilon(x):=\begin{cases}
       -1,\ &x\leq-\varepsilon\\
       x/\ep,\ &-\varepsilon<x< \varepsilon\\
       1,\ &x\ge \varepsilon
    \end{cases}
\end{align*}
We consider the weight $\brak{v}^2:=1+|v|^2$ and thus the function 
$\Psi_\varepsilon(h_m,v):=\psi_\varepsilon(h_m)\brak{v}^2$, multiplying it to the equations \eqref{main-eq1} solved by $h_m, \forall m$ and integrating by part. Defining also 
\begin{align*}
\chi_{\varepsilon}(x):=\begin{cases}
    &1,\ \text{if}\ |x|< \varepsilon\\
    &0,\ \text{otherwise},
\end{cases}
    \end{align*}
we obtain:
\begin{align}\label{hormander}
    \partial_t\int h_m\psi_\varepsilon(h_m)\brak{v}^2dv&=\int (\partial_th_m)\psi_\varepsilon(h_m)\brak{v}^2dv+\frac{1}{\varepsilon}\int (\partial_th_m)\chi_\varepsilon(h_m)h_m\brak{v}^2dv\\
    &=: A_1+B_1.\nonumber
\end{align}
Now we can work on the quantity $A_1$, using the equation \eqref{main-eq1},
\begin{align*}
    A_1:&=\int\Delta h_m\psi_\varepsilon(h_m)\brak{v}^2+  \div(vh_m)\psi_\varepsilon(h_m)\brak{v}^2+\frac{1}{2}\left(Q_m(h,H)+Q_m(H,h)\right)\psi_\varepsilon(h_m)\brak{v}^2dv\\
    &=-\frac{1}{\varepsilon}\int_{\{|h|<\varepsilon\}}|\nabla h_m|^2\brak{v}^2dv + d\int h_m\psi_\varepsilon(h_m)\brak{v}^2dv+\underbrace{\int \left(\nabla h_m \cdot v\right)\psi_\varepsilon(h_m)\left(|v|^2-1\right)dv}_{A_2}\\
    &+\underbrace{\frac{1}{2}\int\left(Q_m(h,H)+Q_m(H,h)\right)\psi_\varepsilon(h_m)\brak{v}^2dv}_{A_3},
\end{align*}
and we can then drop the first negative term. For $A_2$ we define first the following function:
\begin{align*}
    &\zeta_\varepsilon(x):=\begin{cases}
       -x+\ep/2,\ &x\leq-\varepsilon\\
       x^2/(2\ep),\ &-\varepsilon<x< \varepsilon\\
       x-\ep/2,\ &x\ge\varepsilon
    \end{cases}
\end{align*}
with $\nabla \zeta_\ep=\ovl\psi_\ep$ a.e. and $\zeta_\ep\to|x|$ as $\ep\to0$, and we can write
\begin{align*}
    A_2=\int \left(\nabla \zeta_\ep(h_m) \cdot v\left(|v|^2-1\right)\right)dv&=-\int \zeta_\ep(h_m) \div(v\left(|v|^2-1\right))dv\\
    &=-\int \zeta_\varepsilon(h_m)\left(d\left(|v|^2-1\right)+2|v|^2\right)dv.
\end{align*}
Now we get for $A_3$:
\begin{align*}
    A_3:&=\frac{1}{2}\sum_{n=1}^{m-1}\iint\left[h_n(\varphi(v,w))H_{m-n}(w)+h_{m-n}(w)H_{n}(\varphi(v,w))\right]|v-w|\psi_\varepsilon(h_m(v))\brak{v}^2dwdv\\
    &-\sum_{n=1}^{M}\iint  \left[h_m(v)H_{n}(w)+h_{n}(w)H_{m}(v)\right]|v-w|\psi_\varepsilon(h_m(v))\brak{v}^2dwdv.
\end{align*}
Next, we turn to the term $B_1$ of \eqref{hormander}. Since $h_m$ appears in the integrand, the set of $v$ such that $h_m(v)=0$ does not contribute to the integral, hence 
\begin{align*}
    \left|\frac{1}{\varepsilon}\int (\partial_th_m)\chi_\varepsilon(h_m)h_m\brak{v}^2dv\right| &=\left| \frac{1}{\ep}\int_{\{v:\, 0<|h_m(v)|<\ep\}}(\partial_th_m)h_m\brak{v}^2dv\right| \\
    &\le\int_{\{v:\, 0<|h_m(v)|<\ep\}}|\partial_th_m|\brak{v}^2dv.
\end{align*}
Since $h_m$ is continuous, $|h_m|$ is also continuous sending open sets to open sets, hence $\{0<|h_m(v)|<\ep\}\downarrow\emptyset$ as $\ep\to0$. We now claim that $\partial_th_m(t)\brak{v}^2\in L^1$ for every $t\in[0,T]$. It will imply that the last integral goes to $0$ as $\ep\to0$. By linearity $\partial_th_m=\partial_tf_m-\partial_tg_m$, hence it is enough to verify this claim separately for $f_m$ and $g_m$. Let us take $f_m$ and prove $\partial_t f_m(t)\brak{v}^2\in L^1$ for a.e. $t$. Note that
\begin{align*}
    \int |\partial_t f_m| \brak{v}^2dv &=\int \left|\Delta f_m +\text{div}(vf_m)+Q(f_m,f_m)\right| \brak{v}^2 dv\\
    &=\int \left|\Delta f_m +df_m+v\cdot\nabla f_m+Q(f_m,f_m)\right| \brak{v}^2 dv.
\end{align*}
By Sobolev embedding (which extends to the weighted case), if $f_m\in H^n_k$ for $n,k$ suitably large (in particular, since $f_m$ is a Schwartz function), then the above integral is finite.

Now we sum \eqref{hormander} over $m=1,..,M$ and we get
\begin{align*}
    \partial_t\sum_{m=1}^M\int h_m\psi_\varepsilon(h_m)\brak{v}^2dv&=\sum_{m=1}^M\int (\partial_th_m)\psi_\varepsilon(h_m)\brak{v}^2dv+\sum_{m=1}^M\frac{1}{\varepsilon}\int (\partial_th_m)\chi_\varepsilon(h_m)h_m\brak{v}^2dv\\
    &=:A_1^M+B_1^M.
\end{align*}
We already know that $B_1^M$ is negligible. Further, we have that 
\begin{align*}
    A^M_1&=\sum_{m=1}^M\int\Delta h_m\psi_\varepsilon(h_m)\brak{v}^2+ \div(vh_m)\psi_\varepsilon(h_m)\brak{v}^2dv+\frac{1}{2}\int\left(Q_m(h,H)+Q_m(H,h)\right)\psi_\varepsilon(h_m)\brak{v}^2dv\\
    &\leq d\int \sum_{m=1}^M\left(h_m\psi_\varepsilon(h_m)\right)\brak{v}^2dv+\underbrace{\sum_{m=1}^M \int \left(\nabla h_m \cdot v\right)\psi_\varepsilon(h_m)\left(d\left(|v|^2-1\right)+2|v|^2\right)dv}_{A^M_2}\\
    &+\underbrace{\frac{1}{2}\sum_{m=1}^M\int\left(Q_m(h,H)+Q_m(H,h)\right)\psi_\varepsilon(h_m)\brak{v}^2dv}_{A^M_3}
\end{align*}
before estimating the nonlinear part we note that, using the estimate for each $m$, as $\varepsilon\rightarrow0$, under our regularity assumption, we get
\begin{align*}
    &\partial_t\sum_{m=1}^M\int h_m\psi_\varepsilon(h_m)\brak{v}^2dv\rightarrow\partial_t\sum_{m=1}^M\int |h_m|\brak{v}^2dv;\\
    &A^M_2\rightarrow -\sum_{m=1}^M \int |h_m|
    \left(d\left(|v|^2-1\right)+2|v|^2\right)dv\leq \sum_{m=1}^M C_d\int |h_m|\brak{v}^2.
\end{align*}
So we miss to estimate the nonlinear term $A^M_3$ and pass to the limit:
\begin{align*}
    A_3^M:&=\frac{1}{2}\sum_{m=1}^M\sum_{n=1}^{m-1}\iint \left(\frac{m}{n}\right)^2\left( h_n(\varphi(v,w))H_{m-n}(w)+h_{m-n}(w)H_{n}(\varphi(v,w))\right)|v-w|\psi_\varepsilon(h_m(v))\brak{v}^2dwdv\\
    &-\sum_{m=1}^M\sum_{n=1}^{M}\iint \left(h_m(v)H_{n}(w)+h_{n}(w)H_{m}(v)\right)|v-w|\psi_\varepsilon(h_m(v))\brak{v}^2dwdv
\end{align*}

By the change of variable  
$$
\begin{cases}
    &v'= \varphi(v,w)\\        
    & w=w
\end{cases},\ |J(v,w)|=\frac{n}{m}.
$$
and naming the variable $v'$ again by $v$, we have that
\begin{align*}
    A_3^M&= \frac{1}{2}\sum_{m=1}^M\sum_{n=1}^{m-1}\iint  \left(h_n(v)H_{m-n}(w)+h_{m-n}(w)H_{n}(v)\right)|v-w|\psi_\varepsilon(h_m(v))\brak{\frac{nv+(m-n)w}{m}}^2dwdv\\
    &-\sum_{m=1}^M\sum_{n=1}^{M}\iint \left(h_m(v)H_{n}(w)+h_{n}(w)H_{m}(v)\right)|v-w|\psi_\varepsilon(h_m(v))\brak{v}^2dwdv\\
    &\underbrace{\leq}_{\text{Jensen's ineq.}\ \&\ |\psi_\ep(h_m)|\leq1}  \frac{1}{2}\sum_{n=1}^M\sum_{\ell=1}^{m-1}\iint \left(|h_n(v)|H_{\ell}(w)+|h_{\ell}|(w)H_{n}(v)\right)|v-w|(\brak{v}^2+\brak{w}^2)dwdv\\
    &-\sum_{m=1}^M\sum_{n=1}^{M}\iint |v-w| \left[h_m(v)H_{n}(w)\psi_\varepsilon(h_m(v))\brak{v}^2+h_{n}(v)H_{m}(w)\psi_\varepsilon(h_m(w))\brak{w}^2\right]dwdv\\
    &\underbrace{\le}_{\text{symmetry}}\sum_{n=1}^{M}\sum_{\ell=1}^M\iint |h_n(v)|H_{\ell}(w)|v-w|(\brak{v}^2+\brak{w}^2)dwdv\\
    &-\sum_{m=1}^M\sum_{n=1}^{M}\iint |v-w|\left[h_m(v)H_{n}(w)\psi_\varepsilon(h_m(v))\brak{v}^2+h_{n}(v)H_{m}(w)\psi_\varepsilon(h_m(w))\brak{w}^2\right]dwdv\\
    &= \iint \sum_{\ell=1}^MH_\ell(w)\sum_{n=1}^M|h_n(v)|\left(\brak{v}^2+\brak{w}^2\right)|v-w|dwdv\\
    &-\iint \sum_{m=1}^MH_m(w)\sum_{n=1}^M\left\{ h_n(v)\psi_\varepsilon(h_n(v))\brak{v}^2+h_{n}(v)\psi_\varepsilon(h_m(w))\brak{w}^2\right\}|v-w|dwdv.
\end{align*}
For the first two integrals we have (renaming $\ell$ by $m$)
\begin{align}\label{cancel-1}
\iint \sum_{m=1}^MH_m(w)\sum_{n=1}^M|h_n(v)|(\brak{v}^2+\brak{w}^2)|v-w|dwdv
\end{align}
while, using the fact that
$$
|h\psi_\ep(h)-|h||\leq |h|\chi_\varepsilon(h),
$$
the third integral can be bounded above by
\begin{align}\label{cancel-2}
    \iint \sum_{m=1}^MH_m(w)\sum_{n=1}^M \left[ -|h_n(v)|+|h_{n}|(v)\chi_\varepsilon(h_n(v))\right]\brak{v}^2|v-w|dwdv
\end{align}
while for the fourth, by the elementary inequality $|v-w|\le\langle v\rangle\langle w\rangle$, the moment bound Lemma \ref{lem:moments},  $|\psi_\ep(h_m)|\le 1$, is bounded above by
$$
 C_M\iint\sum_{m=1}^MH_m(w)\brak{w}^3\sum_{n=1}^M|h_{n}|(v)\brak{v}dwdv\le C\int \sum_{n=1}^M |h_{n}|(v)\brak{v}^2dv.
$$
We get the upper bound on $A_3^M$ by adding everything together and noting a cancellation in \eqref{cancel-1}-\eqref{cancel-2} that eliminates any higher moments in $v$,
\begin{align*}
    &A_3^M\le C_M\iint \sum_{m=1}^M H_m(w)\sum_{n=1}^M|h_n(v)|\brak{v}\brak{w}^3dvdw\\
    &+ \int \sum_{m=1}^M H_m(w)\sum_{n=1}^M\int_{\{|h_n(v)|<\varepsilon\}}|h_n(v)||v-w|dv\brak{w}^2dw+C\int \sum_{n=1}^M|h_{n}|(v)\brak{v}^2dv.
\end{align*}
Now taking the limit as $\varepsilon\rightarrow0$ we have
$$
\limsup_{\ep\to0}A^M_3\le  C\iint \sum_{m=1}^M H_m(w)\sum_{n=1}^M|h_n(v)|\brak{v}\brak{w}^3dvdw+\int \sum_{n=1}^M|h_{n}|(v)\brak{v}^2dv
$$
Consider now all the quantities together after passing to the limit, using again the moment bounds,  we have
\begin{align*}
    \frac{d}{dt}\sum_{m=1}^M\int |h_m(v)|\brak{v}^2dv
    &\leq C\int \sum_{m=1}^M|h_m(v)|\brak{v}^2dv .
\end{align*}
Now we can apply Gronwall's lemma and get that $\|f-g\|_{L^1_M(\brak{v}^2dv)}=0$, concluding the uniqueness.\qed

\begin{appendix}
\section{Particle System: Approach and Motivation}\label{appen-particle}

In this section, we provide our original motivation that gives rise naturally to the \abbr{PDE} system we study in this paper. The Smoluchowski type \abbr{SPDE} system we study is conjectured to be the scaling limit of the empirical measure of a system of diffusion particles (idealized rain droplets in the atmosphere) undergoing locally in space coagulation, while subject to an idealized form of turbulence, cf. \cite{FHP} for a discussion. More precisely, for any $d\ge1$ and $N\in\N$, consider a second-order particle system with space variable $x_i^N(t)$ in $\T^d$, velocity variable $v_i^N(t)$ in $\R^d$, mass variable $m_i^N(t)$ in a finite set $\{1,...,M\}$, and initial cardinality $N(0)=N$. Between coagulation events, the motion of an active particle obeys
\begin{equation}\label{particle-sys}
\begin{aligned}
dx^N_i(t)&=v_i^N(t)dt, \quad i\in\cN(t)\\
m_i^N(t) dv^N_i(t)&=\alpha (m_i^N(t))^{1/d}\left[\sqrt{2\mu}dB_i(t)+\sum_{k\in K}\sigma_k(x_i^N(t))\,\circ\,dW_t^k-v_i^N(t)dt\right], 
\end{aligned}
\end{equation}
(whereas after each coagulation, the velocity will be reset according to the conservation of momentum, to be precised below), where 
\begin{itemize}
\item
$\cN(t)$ denotes the set of indices of active particles in the system at time $t$, whose cardinality $|\cN(t)|\le N$,
\item
$\{B_i(t)\}_{i=1}^\infty$ is a given, countable collection of independent standard Brownian motions in $\R^d$
\item
molecular diffusivity $\mu>0$
\item
$\sigma_k(x):\T^d\to\R^d$, $k\in K$ is a given, finite (or more generally countable, subject to additional assumptions) collection of divergence free vector fields
\item
$\{W_t^k\}_{k\in K}$ is a given, finite (or countable) collection of standard Brownian motions in $\R$
\item
$\circ$ denotes Stratonovich integration.
\end{itemize}
The velocity component of the dynamics obeys  Stokes' law with random force (that depends linearly on the radius $r_i^N=(m_i^N)^{1/d}$) given by an intrinsic noise for each particle, plus a common noise of transport type
\begin{align*}
\dot{\cW}(t,x):=\sum_{k\in K}\sigma_k(x)\dot{W}_t^k
\end{align*}
that acts simultaneously on all particles, and as such is seen as an idealized form of turbulent flow in the atmosphere. We denote the $d\times d$ spatial covariance matrix of $\cW(t,x)$ by 
\begin{align*}
\cQ(x,y):=\sum_{k\in K}\sigma_k(x)\otimes\sigma_k(y).
\end{align*}
Moreover, for any fixed $x\in\T^d$ we denote the uniformly elliptic second-order divergence form operator, acting on suitable functions on $\R^d$
\begin{align*}
(\cL^{\cQ,x}_vf)(v):=\left(\mu I+\frac{1}{2}\cQ(x,x)\right)\Delta_vf(v).
\end{align*}
Note that $\cQ(x,x)$ is nonnegative definite for any $x$. For simplicity, in the sequel we consider the case that 
\begin{align}\label{enhanced-diff}
\cL^{\cQ,x}_v\equiv \kappa \Delta_v, 
\end{align}
for some constant $\kappa\ge \mu$ and all $x\in\T^d$. In view of diffusion enhancement, we have in mind that $\kappa\gg \mu$.

Each particle $i\in\cN(t)$ has a mass $m_i^N(t)\in\{1,2,..,M\}$ which changes over time according a stochastic coagulation rule to be described below. The initial mass $m_i(0)$, $i=1,...,N$, are chosen i.i.d. from $\{1,2,..,M\}$ according to a probability distribution so that $\P(m_1(0)=m)=r(m)$ with $\sum_{m=1}^Mr(m)=1$. We are also given deterministic probability density functions $g_m(x,v):\T^d\times\R^{d}\to\R_+$, $m=1,2,...,M$, satisfying additional assumptions, \footnote{those we impose on the initial condition of the \abbr{PDE} system} such that if $m_i(0)=m$ then the initial distribution of $(x_i(0),v_i(0))$ is chosen with probability density $g_m(x,v)$, independently across $i$. We denote
\begin{align*}
f^0_m(x,v)=r(m)g_m(x,v), \quad (x,v)\in\T^d\times\R^{d},\; m=1,..,M,
\end{align*}
which satisfy the same assumptions as those imposed on $\{g_m(x,v)\}_{m=1}^M$.

The rule of coagulation between pairs of particles is as follows. Let $\theta(x):\R^d\to\R_+$ be a given, $C^\infty$-smooth, symmetric probability density function in $\R^d$ with compact support in $\B(0,1)$ (the unit ball around the origin in $\R^d$) and $\theta(0)=0$. Then, for any $\ep\in(0,1)$, denote $\theta^\ep(x):\T^d\to\R_+$ by
\begin{align*}
\theta^\ep(x):=\ep^{-d}\theta(\ep^{-1}x), \quad x\in\T^d.
\end{align*}
Suppose the current configuration of the particle system is 
\begin{align*}
\eta=(x_1,v_1,m_1,x_2,v_2, m_2,...,x_N,v_N, m_N)\in(\T^d\cup\emptyset)^N\times(\R^d\cup\emptyset)^N\times\{1,...,M,\emptyset\}^N
\end{align*}
where $(x_i,v_i,m_i)$ denotes the position, velocity and mass of particle $i$, by convention if particle $i_0$ is no longer active in the system, we set $x_{i_0}=v_{i_0}=m_{i_0}=\emptyset$. Independently for each pair $(i,j)$ of particles, where $i\neq j$ each running over the index set of active particles in $\eta$, with a rate (recall \eqref{mass-const})
\begin{align}\label{rate}
s(m_i^N,m_j^N)\frac{|v_i-v_j|}{N}\theta^\ep(x_i-x_j)
\end{align}
we remove $(x_i,v_i,m_i,x_j,v_j,m_j)$ from the configuration $\eta$, and then add 
\begin{align*}
\left(x_i,\frac{m_iv_i+m_jv_j}{m_i+m_j},m_i+m_j,\emptyset,\emptyset,\emptyset\right)
\end{align*}
with probability $\frac{m_i}{m_i+m_j}$, and instead add 
\begin{align*}
\left(\emptyset,\emptyset,\emptyset, x_j,\frac{m_iv_i+m_jv_j}{m_i+m_j},m_i+m_j\right)
\end{align*}
with probability $\frac{m_j}{m_i+m_j}$. We call the new configuration obtained this way by $S^1_{ij}\eta$ and $S^2_{ij}\eta$ respectively. In words, if $(i,j)$ coagulate, we decide randomly which of $x_i$ and $x_j$ is the new position of the mass-combined particle. If the position chosen is $x_i$, then we consider $j$ as being eliminated (no longer active) and the new particle has index $i$, whereas if the position chosen is $x_j$, then we consider $i$ as being eliminated and the new particle has index $j$. On the other hand, the velocity of the mass-combined particle is obtained by the conservation of momentum as in perfectly inelastic collisions.

Note that the form of the coagulation rate \eqref{rate} is such that $(i,j)$ can coagulate only if $|x_i-x_j|\le\ep$, that is, their spatial positions have to be $\ep$-close. We are interested in the case when $\ep=\ep(N)\to0$ as $N\to\infty$, so that the interaction is not of mean-field type, but rather local, see the statement of our conjectured result below. In particular, if $\ep=O(N^{-1/d})$ then each particle typically interacts with a bounded number of others at any given time. The essential feature of our coagulation rate, that of the appearance of $|v_i-v_j|$, is inspired by the coagulation kernels used in the physics literature for describing cloud particles (which in general can depend also on $m_i,m_j$ and other physically relevant quantities), and we believe it is key to demonstrating coagulation enhancement.

For each $N\in\N$, $T\in(0,\infty)$ and $m\in\{1,..,M\}$, we denote the process of empirical measure on position and velocity of mass-$m$ particles in the system by 
\begin{align*}
\mu^{N,m}_t(dx,dv):=\frac{1}{N}\sum_{i\in\cN(t)}\delta_{x_i^N(t)}(dx)\delta_{v_i^N(t)}(dv)1_{\{m_i^N(t)=m\}}\in\cM_{1,+}(\T^d\times\R^d)
\end{align*}
where $\cM_{1,+}:=\cM_{1,+}(\T^d\times\R^d)$ denotes the space of subprobability measures on $\T^d\times\R^d$ equipped with weak topology. The choice of the initial conditions for our system implies that $\P$-a.s.
\begin{align*}
\mu_0^N(dx,dv)\to f^0(x,v)dxdv, \quad \text{as }N\to\infty
\end{align*}
where the limit is absolutely continuous. We conjecture that, under the assumption of local interaction, i.e.
\begin{align*}
\lim_{N\to\infty}\ep(N)=0, \quad \limsup_{N\to\infty}\frac{\ep(N)^{-d}}{N}<\infty.
\end{align*}
for every finite $T$, the collection of empirical measures $\{\mu^N_t(dx,dv):t\in[0,T]\}_{m=1}^M$ converges in probability, as $N\to\infty$, in $\cD([0,T],\cM_{1,+})^M$ the space of c\`adl\`ag paths taking values in $\cM_{1,+}$, equipped with the Skorohod topology, towards an absolutely continuous limit, which is the pathwise unique weak solution $\{f_m(t,x,v):t\in[0,T]\}_{m=1}^M$ of a Smoluchowski-type \abbr{SPDE} system, see \eqref{smol-spde}. The latter \abbr{SPDE} degenerates to the \abbr{PDE} system we study in this paper \eqref{main-eq-intro} when the It\^o term is switched off. While we do not provide a rigorous proof here, we sketch below a heuristic argument and postpone the rigorous derivation of the \abbr{SPDE} from the particle system to a future work. We think that this heuristic argument is sufficient to justify our interest in studying our \abbr{PDE} system. In fact, in the literature there exists specific limiting procedures cf. \cite{Galeati, FGL} that allow, in principle, to obtain the \abbr{PDE} from the \abbr{SPDE} by carefully choosing the vector fields $\sigma_k(x)$.

Let $\phi(x,v)$ be any function of class $C_c^\infty(\T^d\times\R^d)$, for any $m$ we apply It\^o's formula to the process
\begin{align*}
\langle \phi, \mu^{N,m}_t\rangle:=\frac{1}{N}\sum_{i\in\cN(t)}\phi(x_i^N(t),v_i^N(t))1_{\{m_i^N(t)=m\}}
\end{align*}
and we get 

\begin{align*}
&\langle \phi, \mu^{N,m}_T\rangle=\langle \phi, \mu^{N,m}_0\rangle+\int_0^T\frac{1}{N}\sum_{i\in\cN(t)}v_i^N(t)\cdot\nabla_x\phi(x_i^N(t),v_i^N(t))1_{\{m_i^N(t)=m\}}dt\\
&-c(m)\int_0^T\frac{1}{N}\sum_{i\in\cN(t)}v_i^N(t)\cdot\nabla_v\phi(x_i^N(t),v_i^N(t))1_{\{m_i^N(t)=m\}}dt\\
&+c(m)\int_0^T\frac{1}{N}\sum_{i\in\cN(t)}\nabla_v\phi(x_i^N(t),v_i^N(t))1_{\{m_i^N(t)=m\}}\cdot dB_i(t)\\
&+\mu c(m)^2\int_0^T\frac{1}{N}\sum_{i\in\cN(t)}\Delta_v\phi(x_i^N(t),v_i^N(t))1_{\{m_i^N(t)=m\}}dt\\
&+\frac{1}{2}c(m)^2\int_0^T\frac{1}{N}\sum_{i\in\cN(t)}\cQ(x_i^N(t),x_i^N(t))\Delta_v\phi(x_i^N(t),v_i^N(t))1_{\{m_i^N(t)=m\}}dt\\
&+c(m)\int_0^T\frac{1}{N}\sum_{i\in\cN(t)}\sum_{k\in K}\sigma_k(x_i^N(t))\cdot\nabla_v\phi(x_i^N(t),v_i^N(t))1_{\{m_i^N(t)=m\}} dW_t^k\\
&+\int_0^T\frac{1}{N^2}\sum_{n=1}^{m-1}\sum_{\substack{i,j\in\cN(t), \\ i\neq j}}s(n,m-n)|v^N_i(t)-v^N_j(t)|\theta^\ep(x_i^N(t)-x_j^N(t))\\
&\quad\quad\quad\quad \cdot\phi\left(x_i^N(t),\frac{nv_i^N(t)+(m-n)v^N_j(t)}{m}\right)\frac{n}{m}1_{\{m_i^N(t)=n,\,m_j^N(t)=m-n\}}dt\\
&+\int_0^T\frac{1}{N^2}\sum_{n=1}^{m-1}\sum_{\substack{i,j\in\cN(t), \\ i\neq j}}s(n,m-n)|v^N_i(t)-v^N_j(t)|\theta^\ep(x_i^N(t)-x_j^N(t))\\
&\quad\quad\quad\quad \cdot \phi\left(x_j^N(t),\frac{nv_i^N(t)+(m-n)v^N_j(t)}{m}\right)\frac{m-n}{m}1_{\{m_i^N(t)=n,\, m_j^N(t)=m-n\}}dt\\
&-\int_0^T\frac{2}{N^2}\sum_{n=1}^{M}\sum_{i,j\in\cN(t), \; i\neq j}s(n,m)|v^N_i(t)-v^N_j(t)|\theta^\ep(x_i^N(t)-x_j^N(t))\\
&\quad\quad\quad\quad \cdot\phi(x_i^N(t),v_i^N(t))1_{\{m_i^N(t)=m, \, m_j^N(t)=n\}}dt\\
&+ M^{N,J}_T
\end{align*}
where $\{M^{N,J}_t\}_{t\ge0}$ is a martingale associated with coagulation (or jumps) that we do not write explicitly. We can rewrite, using the simplification \eqref{enhanced-diff}, the previous identity more compactly as
\begin{align}\label{iden-emp}
&\langle \phi, \mu^{N,m}_T\rangle=\langle \phi, \mu^{N,m}_0\rangle+\int_0^T\Big\langle v\cdot\nabla_x\phi-c(m)v\cdot \nabla_v+c(m)^2\kappa\Delta_v, \;\mu^{N,m}_t\Big\rangle dt\nonumber\\
&+c(m)\int_0^T\frac{1}{N}\sum_{i\in\cN(t)}\nabla_v\phi(x_i^N(t),v_i^N(t))1_{\{m_i^N(t)=m\}}\cdot dB_i(t)\nonumber\\
&+c(m)\int_0^T\sum_{k\in K}\Big\langle \sigma_k(x)\cdot\nabla_v\phi,\;\mu_t^{N,m}\Big\rangle dW_t^k + M_T^{N,J}\nonumber\\
&+\int_0^T\sum_{n=1}^{m-1}\frac{n}{m}\Big\langle s(n,m-n)|v-w|\theta^\ep(x-y)\phi\Big(x,\frac{nv+(m-n)w}{m}\Big), \; \mu^{N,n}_t(dx,dv)\mu^{N,m-n}_t(dy,dw)\Big\rangle dt\nonumber\\
&+\int_0^T\sum_{n=1}^{m-1}\frac{m-n}{m}\Big\langle s(n,m-n)|v-w|\theta^\ep(x-y)\phi\Big(y,\frac{nv+(m-n)w}{m}\Big), \; \mu^{N,n}_t(dx,dv)\mu^{N,m-n}_t(dy,dw)\Big\rangle dt\nonumber\\
&-\int_0^T2\sum_{n=1}^{M}\Big\langle s(n,m)|v-w|\theta^\ep(x-y)\phi(x,v), \; \mu^{N,m}_t(dx,dv)\mu^{N,n}_t(dy,dw)\Big\rangle dt.
\end{align}
We expect that $M_T^{N,J}$ and the stochastic integrals in $dB_i(t)$ vanish in limit as $N\to\infty$ in $L^2(\P)$, whereas the martingale associated with the common noise persists in the limit. Further, suppose that we have proved that the laws of the collection of $\cD_T(\cM_{1,+})^M$-valued random variables $\{\mu^{N,m}_t:t\in[0,T]\}_{m=1}^M$, $N\in\N$, is tight hence weakly relatively compact. Consider any weak subsequential limit
\begin{align}\label{subseq-conv}
\{\mu^{N_\ell,m}_t:t\in[0,T]\}_{m=1}^M\overset{\ell\to\infty}{\to} \{\ovl{\mu}^m_t:t\in[0,T]\}_{m=1}^M.
\end{align}
For the sake of arguments, apply Skorohod's representation theorem and there exists some auxiliary probability space and on which a sequence of random variables having the same laws as the ones in \eqref{subseq-conv} so that the above convergence holds almost surely. By an abuse of notation, below we use the same letters for the variables on the auxiliary space. Assume that we can prove that $\ovl{\mu}^m_t$ has a density $f_m(t,x,v)$ with respect to Lebesgue measure for every $m$ and $t$. Then, with minor work the linear part of the identity \eqref{iden-emp} converges as $\ell\to\infty$, i.e. 
\begin{align*}
&\langle \phi, \mu^{N_\ell,m}_T\rangle \to \langle \phi, f_m(T,x,v)\rangle,\\
&\langle \phi, \mu^{N_\ell,m}_0\rangle \to \langle \phi,f^0_m(x,v)\rangle, \\
&\int_0^T\left\langle v\cdot\nabla_x\phi-c(m)v\cdot \nabla_v+\kappa c(m)^2\Delta_v, \;\mu^{N_\ell,m}_t\right\rangle dt \\
&\to \int_0^T\left\langle v\cdot\nabla_x\phi-c(m)v\cdot \nabla_v+\kappa c(m)^2\Delta_v, \;f_m(t,x,v)\right\rangle dt,\\
&\int_0^T\sum_{k\in K}\left\langle \sigma_k(x)\cdot\nabla_v\phi,\;\mu_t^{N_\ell,m}\right\rangle dW_t^k \to \int_0^T\sum_{k\in K}\left\langle \sigma_k(x)\cdot\nabla_v\phi,\;f_m(t,x,v)\right\rangle dW_t^k.
\end{align*}
The proof that the nonlinear terms also converge to the corresponding limits is more difficult, hence here we content ourselves with a very heuristic ``two-step argument''. Consider each summand of the last term of \eqref{iden-emp} :
\begin{align*}
\int_0^T\left\langle s(n,m)|v-w|\theta^\ep(x-y)\phi(x,v), \; \mu^{N_\ell,m}_t(dx,dv)\mu^{N_\ell,n}_t(dy,dw)\right\rangle dt, \quad 1\le m,n\le M.
\end{align*}
Assume we take $\ell\to\infty$ first, keeping $\ep$ fixed, we get 
\begin{align*}
&\int_0^T\left\langle s(n,m) |v-w|\theta^\ep(x-y)\phi(x,v), \; \mu^{N_\ell,m}_t(dx,dv)\mu^{N_\ell,n}_t(dy,dw)\right\rangle dt\\
&\overset{\ell\to\infty}{\to} \int_0^T\int s(n,m)|v-w|\theta^\ep(x-y)\phi(x,v) f_m(t,x,v)f_n(t,y,v)dxdydvdwdt\; ;
\end{align*}
then we take $\ep\to0$, and since $\theta^\ep(\cdot)$ approximates the delta-Dirac $\delta_0$, we get 
\begin{align*}
&\int_0^T\int s(n,m)|v-w|\theta^\ep(x-y)\phi(x,v) f_m(t,x,v)f_n(t,y,v)dxdydvdwdt\\
&\overset{\ep\to0}{\to}\int_0^T\int s(n,m)|v-w|\phi(x,v)f_m(t,x,v)f_n(t,x,w)dxdvdwdt.
\end{align*}
Similarly, each summand of the second and third terms from the bottom in \eqref{iden-emp} also converge under the two-step argument (using also $n/m+(m-n)/m=1$)
\begin{align*}
&\int_0^T\frac{n}{m}\left\langle s(n,m-n)|v-w|\theta^\ep(x-y)\phi\left(x,\frac{nv+(m-n)w}{m}\right), \; \mu^{N_\ell,n}_t(dx,dv)\mu^{N_\ell,m-n}_t(dy,dw)\right\rangle dt\\
&+\int_0^T\frac{m-n}{m}\left\langle s(n,m-n)|v-w|\theta^\ep(x-y)\phi\left(y,\frac{nv+(m-n)w}{m}\right), \; \mu^{N_\ell,n}_t(dx,dv)\mu^{N_\ell,m-n}_t(dy,dw)\right\rangle dt\\
&\to\int_0^T\int s(n,m-n)|v-w|\phi\left(x,\frac{nv+(m-n)w}{m}\right)f_n(t,x,v)f_m(t,x,w)dvdwdt.
\end{align*}
Hence, we have (at least under Skorohod's representation) the limit identity satisfied by $\{f_m(t,x,v)\}$:
\begin{align*}
\langle \phi, f_m(T)\rangle =& \langle \phi, f_m^0\rangle +\int_0^T\left\langle v\cdot\nabla_x\phi-c(m)v\cdot \nabla_v+\kappa c(m)^2\Delta_v, \;f_m(t,x,v)\right\rangle dt\\
&+\sum_{n=1}^{m-1}\int_0^T\int s(n,m-n)|v-w|\phi\left(x,\frac{nv+(m-n)w}{m}\right) f_n(t,x,v)f_{m-n}(t,x,w)dxdvdwdt\\
&-2\sum_{n=1}^M\int_0^T\int s(n,m)|v-w|\phi(x,v) f_m(t,x,v)f_{n}(t,x,w)dxdvdwdt\\
&+c(m)\int_0^T\sum_{k\in K}\left\langle \sigma_k(x)\cdot\nabla_v\phi,\;f_m(t,x,v)\right\rangle dW_t^k, \quad m=1,...,M,
\end{align*}
which is the weak formulation of the \abbr{SPDE} system
\begin{align}
\begin{cases}\label{smol-spde}
df_m(t,x,v)=&\left(-v\cdot\nabla_x+c(m)\text{div}_v\left(v\cdot\right)+\kappa c(m)^2\Delta_v\right)f_m(t,x,v)dt \\[5pt]
&-c(m)\sum_{k\in K}\sigma_k(x)\cdot\nabla_vf_m(t,x,v)dW_t^k\\[5pt]
&+\sum_{n=1}^{m-1}\int_{\{nw'+(m-n)w=mv\}}s(n,m-n)|w'-w|f_n(t,x,w')f_{m-n}(t,x,w)dwdw'dt \\[5pt]
&-2\sum_{n=1}^M\int s(n,m)|v-w|f_m(t,x,v)f_n(t,x,w)dwdt \\[10pt]
f_m(\cdot,x,v)|_{t=0}=&f^0_m, \quad m=1,...,M.
\end{cases}
\end{align}
Rewriting the It\^o integral as a Stratonovich integral plus a corrector, we equivalently have that
\begin{align*}
df_m(t,x,v)=&\left(-v\cdot\nabla_x+c(m)\text{div}_v\left(v\cdot\right)+\mu c(m)^2\Delta_v\right)f_m(t,x,v)dt\\
&-c(m)\sum_{k\in K}\sigma_k(x)\cdot\nabla_vf_m(t,x,v)\,\circ\,dW_t^k\\
&+\sum_{n=1}^{m-1}\int_{\{nw'+(m-n)w=mv\}}s(n,m-n)|w'-w|f_n(t,x,w')f_{m-n}(t,x,w)dwdw'dt\\
&-2\sum_{n=1}^M\int s(n,m)|v-w|f_m(t,x,v)f_n(t,x,w)dwdt.
\end{align*}
Thus, we see that the same Stratonovich transport-type noise that acts on the particle system \eqref{particle-sys} also acts on the \abbr{SPDE}.

Lastly, we need to prove that the solution of this \abbr{SPDE} system \eqref{smol-spde} is pathwise unique, which allows to conclude (via nontrivial arguments) that the full sequence of empirical measure converges.
\end{appendix}
\printbibliography 

\end{document}